\documentclass{article}
\usepackage{fullpage}

\usepackage[utf8]{inputenc}
\usepackage{amsthm,amssymb}
\usepackage{amsmath}
\usepackage{mathtools}
\usepackage{caption,subcaption}
\usepackage{epsfig,graphicx,graphics,color,tikz,tikz-cd}
\usetikzlibrary{fit}
\usepackage [linesnumbered, ruled,vlined]{algorithm2e}
\usetikzlibrary{calc,decorations.pathmorphing,shapes}
\usepackage{tcolorbox}
\usepackage{array,makecell,multirow}
\setcellgapes{4pt}
\usepackage{bbm}
\usepackage{varwidth}
\usepackage{rotating}
\usepackage{booktabs}
\usepackage[mathcal,mathscr]{eucal}
\usepackage{varwidth}
\usepackage{enumerate}
\usepackage[shortlabels]{enumitem}
\usepackage[sort]{cite}
\usepackage{xspace}
\usepackage{bm}
\usepackage{csquotes}
\usepackage{titling}
\usepackage{aliascnt}
\usepackage{hyperref}
\hypersetup{
colorlinks=true,       
linkcolor=blue,        
citecolor=magenta,         
filecolor=magenta,     
urlcolor=cyan,         
linktoc=all
}

\theoremstyle{plain}
\newtheorem{thm}{Theorem}[section]
\newtheorem*{thm*}{Theorem}
\newaliascnt{lem}{thm}
\newtheorem{lem}[lem]{Lemma}
\aliascntresetthe{lem}
\newaliascnt{cor}{thm}
\newtheorem{cor}[cor]{Corollary}
\aliascntresetthe{cor}
\newaliascnt{cl}{thm}
\newtheorem{cl}[cl]{Claim}
\aliascntresetthe{cl}
\newaliascnt{prop}{thm}
\newtheorem{prop}[prop]{Proposition}
\aliascntresetthe{prop}

\theoremstyle{definition}
\newaliascnt{rem}{thm}
\newtheorem{rem}[rem]{Remark}
\aliascntresetthe{rem}

\usepackage[noabbrev]{cleveref}

\Crefname{thm}{Theorem}{Theorems}
\Crefname{lem}{Lemma}{Lemmas}
\Crefname{cor}{Corollary}{Corollaries}
\Crefname{cl}{Claim}{Claims}
\Crefname{prop}{Proposition}{Propositions}
\Crefname{rem}{Remark}{Remarks}

\newcommand*{\claimproofname}{Proof of claim.}
\newenvironment{claimproof}[1][\claimproofname]{\begin{proof}[#1]}{\end{proof}}

\def\final{0}  
\def\iflong{\iffalse}
\ifnum\final=0  
\newcommand{\knote}[1]{{\color{red}[{\tiny \textbf{Kristóf:} \bf #1}]\marginpar{\color{red}*}}}
\newcommand{\cnote}[1]{{\color{blue}[{\tiny \textbf{Chaitanya:} \bf #1}]\marginpar{\color{blue}*}}}
\newcommand{\ynote}[1]{{\color{purple}[{\tiny \textbf{Yuhang:} \bf #1}]\marginpar{\color{purple}*}}}
\else 
\newcommand{\knote}[1]{}
\newcommand{\cnote}[1]{}
\newcommand{\ynote}[1]{}
\fi

\makeatletter
\renewcommand{\paragraph}{%
  \@startsection{paragraph}{4}%
  {\z@}{1.5ex \@plus 1ex \@minus .2ex}{-1em}%
  {\normalfont\normalsize\bfseries}%
}
\makeatother

\DeclareMathOperator\si{si}
\DeclareMathOperator{\clo}{cl}
\DeclareMathOperator{\spa}{span}

\newcommand{\cB}{\mathcal{B}}
\newcommand{\cC}{\mathcal{C}}

\newcommand{\cH}{\mathcal{H}}
\newcommand{\cI}{\mathcal{I}}

\newcommand{\cM}{M}
\newcommand{\cN}{N}
\newcommand{\cP}{\mathcal{P}}

\definecolor[named]{Blue}{cmyk}{1,0.1,0,0.1}
\definecolor[named]{Yellow}{cmyk}{0,0.16,1,0}
\definecolor[named]{Orange}{cmyk}{0,0.42,1,0.01}
\definecolor[named]{Red}{cmyk}{0,0.90,0.86,0}
\definecolor[named]{LightBlue}{cmyk}{0.49,0.01,0,0}
\definecolor[named]{Green}{cmyk}{0.20,0,1,0.19}

\SetKwFor{Whileblock}{while}{do}{}
\SetKwBlock{Begin}{}{}
\SetVlineSkip{1.5pt}
\makeatletter
\newcommand{\linkdest}[1]{\Hy@raisedlink{\hypertarget{#1}{}}}
\makeatother

\newlength{\bibitemsep}
\newlength{\bibparskip}
\let\oldthebibliography\thebibliography
\renewcommand\thebibliography[1]{%
\oldthebibliography{#1}%
\setlength{\parskip}{\bibitemsep}%
\setlength{\itemsep}{\bibparskip}%
}

\title{Joins and Ear Decompositions Beyond Graphic Matroids}

\thanksmarkseries{alph}
\author{
Yuhang Bai\thanks{School of Mathematics and Statistics, Northwestern Polytechnical University and Xi'an-Budapest Joint Research Center for Combinatorics, Xi'an 710129,
Shaanxi, People's Republic of China. Email: \texttt{yhbai@mail.nwpu.edu.cn}.}
\and
Kristóf Bérczi\thanks{MTA-ELTE Matroid Optimization Research Group and HUN-REN–ELTE Egerváry Research Group, Department of Operations Research, ELTE Eötvös Loránd University, and HUN-REN Alfréd Rényi Institute of Mathematics, Budapest, Hungary. Email: \texttt{kristof.berczi@ttk.elte.hu}.}
\and
Chaitanya Nalam\thanks{University of Michigan, Ann Arbor, USA. Email: \texttt{nalamsai@umich.edu}.}
}
\date{}

\begin{document}
\maketitle
\thispagestyle{empty}

\begin{abstract}

For a matroid $M$, a join is a set $J\subseteq E(M)$ that meets every circuit $C$ in at most $|C|/2$ elements. Let $\mu(M)$ denote the maximum size of a join. Motivated by Frank's min--max theorem for graphic matroids, we compare $\mu(M)$ with an ear-decomposition parameter $\eta(M)=(r(M)+\varphi(M))/2$, where $\varphi(M)$ is the minimum number of even lobes in an ear decomposition of $M$. Frank's theorem implies $\mu(M)=\eta(M)$ for connected graphic matroids. 

Here we study how far this equality extends beyond graphic matroids. We show that the exact equality does not hold in general: it already fails for cographic matroids, hence within the binary class. Furthermore, the class of matroids satisfying $\mu(M)=\eta(M)$ is not minor-closed, thus there is little hope for a forbidden minor characterization. We also prove that computing a maximum join is NP-hard for cographic matroids, hard to approximate within a factor of $519/520$, and NP-hard for sparse paving matroids given by their list of bases. Despite these negative results, we show that the two parameters remain quantitatively comparable in several natural classes. We prove comparison bounds for binary, paving, cographic, and arbitrary connected matroids. In particular, using Seymour's decomposition theorem, we combine the equality for graphic matroids, the bound for cographic matroids, and a direct analysis of $R_{10}$ to obtain $\eta(M)\leq 6\mu(M)-2$ for every regular matroid $M$.

\medskip

\noindent \textbf{Keywords:} Matroids, Joins, Ear decompositions, Cographic matroids, Regular matroids, Paving matroids, Covering radius
\end{abstract}

\newpage
\pagenumbering{roman}
\tableofcontents
\newpage
\pagenumbering{arabic}
\setcounter{page}{1}

\section{Introduction}
\label{sec:intro}

Ear decompositions are one of the basic structural tools in graph theory and matching theory. They provide inductive characterizations of several fundamental graph classes: connected graphs without bridges are precisely the graphs admitting an ear-decomposition~\cite{Robbins1939}, 2-vertex-connected graphs are characterized by open ear-decompositions~\cite{Whitney1932}, and factor-critical graphs are characterized by odd ear-decompositions~\cite{Lovasz1972}. The latter theorem, due to Lovász, states that a graph $G$ is factor-critical if and only if $G$ admits an ear-decomposition in which every ear has odd length. Thus the minimum possible number of even ears in an ear-decomposition gives a natural measure of how far a graph is from being factor-critical.

Frank~\cite{frankConservative} made this measure part of a min--max theorem on conservative weightings and ear decompositions. Let $G=(V,E)$ be a $2$-edge-connected graph. A set $J\subseteq E$ is called a \emph{join} if
\begin{equation*}
        |J\cap C|\leq \frac{|C|}{2}
\end{equation*}
for every circuit\footnote{We use \emph{circuit} rather than \emph{cycle} for graphs, since in matroid theory a cycle means a disjoint union of circuits.} $C$ of $G$. Let $\mu(G)$ denote the maximum cardinality of a join.

A weighting $w\colon E\to \mathbb R$ is called \emph{conservative} if $\sum_{e\in C} w(e)\ge 0$ for every circuit $C$ of $G$. To a set $J\subseteq E$, associate the $\pm1$ weighting $w_J$ that assigns weight $-1$ to the edges in $J$ and weight $+1$ to all remaining edges. Then $J$ is a join if and only if $w_J$ is conservative. Thus joins are equivalently conservative $\pm1$ edge-weightings, with the edges of the join being precisely the negative edges. In particular, $\mu(G)$ is the maximum number of negative edges in a conservative $\pm1$ edge-weighting. Frank also gave an equivalent interpretation of $\mu(G)$ in terms of minimum $T$-joins~\cite{frankConservative}. A further interpretation, in terms of the covering radius of the cycle code of $G$, was proved by Sol\'e and Zaslavsky~\cite{SoleZaslavsky1994}.

On the ear-decomposition side, let $\varphi(G)$ be the minimum number of even ears in an ear-decomposition of $G$. Since every ear-decomposition of a connected graph has $|E|-|V|+1$ ears, minimizing the number of even ears is equivalent to maximizing the number of odd ears. If $\gamma(G)$ denotes the maximum number of odd ears in an ear-decomposition of $G$, then $\gamma(G)=|E|-|V|+1-\varphi(G)$. Frank proved the following min--max theorem:
\begin{equation*}
        \mu(G)=\frac{|V|-1+\varphi(G)}{2}
              =\frac{|E|-\gamma(G)}{2}.
\end{equation*}
In other words, the maximum size of a join is exactly determined by the optimum parity profile of an ear-decomposition. It is worth emphasizing that the proof in~\cite{frankConservative} is constructive and yields polynomial-time algorithms for constructing the corresponding optima.

The above min--max theorem has also played a role in the algorithmic use of ear decompositions. Its formulation connects ear-decompositions with conservative weightings, minimum $T$-joins, and matchings. This viewpoint later became part of the toolkit for approximation algorithms for graphic traveling-salesman-type problems. In particular, the work of Sebő and Vygen~ \cite{SeboVygen2014} on ``nicer ears'' uses optimized ear-decompositions as a central ingredient in improved approximation algorithms for graph-TSP, the minimum $T$-tour problem, the $s$-$t$ path graph-TSP, and the minimum-size two-edge-connected spanning subgraph problem. Though these results are not used in the paper, they illustrate that optimized ear-decompositions carry algorithmically useful information.

Szegedy and Szegedy~\cite{Szegedy2006} later developed a matroidal analogue of the factor-critical and ear-decomposition side of this theory. Let $\cM$ be a matroid on ground set $E$. An ear-decomposition of $\cM$ is a sequence of circuits $C_1,\dots,C_t$ such that, writing
\begin{equation*}
        D_i=C_1\cup \dots \cup C_i
        \qquad\text{and}\qquad
        D_0=\emptyset,
\end{equation*}
the set
\begin{equation*}
        P_i=C_i\setminus D_{i-1}
\end{equation*}
of new elements is nonempty, the circuit $C_i$ meets $D_{i-1}$ for $i\geq 2$, and $P_i$ is minimal among the possible new parts of circuits satisfying these two nontriviality conditions. We call $P_i$ the $i^{th}$ \emph{lobe}. The lobe is odd or even according to the parity of $|P_i|$.

As in graphs, define $\varphi(\cM)$ to be the minimum number of even lobes in an ear-decomposition of $\cM$, and define $\gamma(\cM)$ to be the maximum number of odd lobes. For connected matroids, every ear-decomposition has the same number of lobes, namely $|E(\cM)|-r(\cM)$, and the lobe sizes satisfy $r(\cM)=\sum_i (|P_i|-1)$.
It follows that
\begin{equation*}
        \frac{|E(\cM)|-\gamma(\cM)}{2}
        =
        \frac{r(\cM)+\varphi(\cM)}{2}.
\end{equation*}
We denote this common value by $\eta(\cM)$.

Building on Szigeti's~\cite{Szigeti1996} work on the matroid defined by optimal ear-decompositions of a graph, Szegedy and Szegedy defined a bridgeless matroid to be factor-critical if it admits an odd ear-de\-com\-po\-si\-tion, or equivalently, if $\varphi(\cM)=0$. Their main theorem gives an algebraic characterization of this property for matroids representable over fields of characteristic $2$: such a matroid admits an odd ear-decomposition if and only if it has a representation by a space on which the induced scalar product is a non-degenerate symplectic form. They also show that, for matroid $\cM$ representable over characteristic $2$, the independent sets of $\cM$ whose contraction admits an odd ear-decomposition form the feasible sets of a representable $\Delta$-matroid. In particular, using the approach developed in Szegedy's thesis~\cite{SzegedyThesis2005}, the value $\varphi(\cM)$ can be computed in randomized polynomial time for matroids represented over finite fields of characteristic $2$.

More recently, ear-decomposition ideas have appeared in two further matroidal settings. Jordán~\cite{jordan2024ear} used ear decompositions of connected matroids to study minimally connected matroids and rigidity matroids, obtaining sharp bounds for minimally connected two-dimensional rigidity matroids and a new proof of Murty's theorem~\cite{murty1974extremal}. In a different, topological direction, Athanasiadis and Ferroni~\cite{athanasiadis2026convex} proved that the augmented Bergman complex of every matroid admits a convex ear decomposition. Although this is a convex ear decomposition of a simplicial complex rather than an ear decomposition of a matroid in the sense used here, it further illustrates the recent role of ear-decomposition methods in matroid theory.

These results give a matroidal extension of the ear-decomposition side of Frank's theorem, but they do not address the join side. For a matroid $\cM$, the natural definition is identical to the graphic one: a set $J\subseteq E(\cM)$ is a \emph{join} if
\begin{equation*}
        |J\cap C|\leq \frac{|C|}{2}
\end{equation*}
for every circuit $C$ of $\cM$, and $\mu(\cM)$ denotes the maximum size of a join. If $\cM$ is graphic, then Frank's theorem says precisely that $\mu(\cM)=\eta(\cM)$.
This parameter has a separate interpretation through coding theory and signed graphs. For a binary matroid $\cM$, $\mu(\cM)$ is exactly  the covering radius of the cycle space. For a cographic matroid of a graph $G$, this becomes the covering radius of the cutset code of $G$. Equivalently, it is the maximum frustration of $G$: the largest, over all signings of $G$, of the minimum number of negative edges obtainable after switching~\cite{SoleZaslavsky1994,Bowlin2012}. This connection is central to our hardness proof. It also explains why the cographic case is already algorithmically different from the graphic one.

Thus Frank's theorem relates two notions that both have natural matroidal analogues: joins, defined by inequalities on circuits, and ear decompositions, which are available for connected matroids. This makes it natural to ask whether the min--max equality is special to graphs or can be extended to a more general matroidal setting. Our results show that the exact min--max theorem is genuinely graphic, but that its two sides remain meaningfully related in several large matroid classes; in particular, regular matroids still admit a constant-factor analogue. The central question of this paper is therefore the following: 
\begin{center}
    \emph{How much of the join–ear equality survives beyond graphic matroids?}
\end{center} 

\subsection{Our Results}
\label{sec:results}

Our answer to the question above has three parts. First, the exact join--ear equality is special to graphic matroids and does not extend to matroids in general. Second, the two sides of Frank's theorem behave differently from an algorithmic point of view: the ear-decomposition parameter is tractable in an important representable setting, while the join parameter is already hard for closely related matroid classes. Third, despite these negative results, the two parameters remain quantitatively comparable in several natural classes of matroids.

We begin by showing that the equality $\mu=\eta$ is not a general matroidal phenomenon. It already fails for cographic matroids: the matroid $\cM=\cM^\ast(K_{4,4})$ satisfies $\mu(\cM)=4$ and $\eta(\cM)=5$ (\Cref{prop:k44-example}). A one-edge extension of this example shows that the class of connected matroids satisfying $\mu=\eta$ is not minor-closed (\Cref{rem:minor}). The reverse inequality can also occur: we give a connected paving matroid with $\eta(\cM)<\mu(\cM)$ (\Cref{prop:incomparable}). Thus, for general matroids, the two parameters are not ordered in either direction.

We then turn to the algorithmic side. The ear-decomposition parameter admits a positive algorithmic result in characteristic $2$: the results of Szegedy and Szegedy~\cite{Szegedy2006}, together with the algorithmic discussion in Szegedy's thesis~\cite{SzegedyThesis2005}, imply that $\varphi(\cM)$ can be computed in randomized polynomial time for matroids represented over finite fields of characteristic $2$. The join parameter behaves differently. For binary matroids, $\mu(\cM)$ is the covering radius of the cycle space; for cographic matroids, this is the covering radius of a cutset code and, equivalently, the maximum frustration of a graph, following the work of Sol\'e and Zaslavsky~\cite{SoleZaslavsky1994} and Bowlin~\cite{Bowlin2012}. Since Guruswami, Micciancio, and Regev~\cite{GuruswamiMicciancioRegev2005} proved that the covering-radius problem for linear codes is hard to approximate, one cannot expect the same algorithmic behavior as for the ear-decomposition parameter. In our setting, we prove that computing a maximum join is NP-hard already for cographic matroids (\Cref{thm:frustration-hard}). More strongly, unless $\mathrm{P}=\mathrm{NP}$, there is no polynomial-time algorithm that, given a cographic matroid $\cM$, outputs a join of size at least $519/520\cdot\mu(\cM)$ (\Cref{thm:no-approx}). We also prove a separate NP-hardness result for connected sparse paving matroids given by their list of bases (\Cref{thm:paving-mu-hard}).

Since the exact equality fails and the join parameter is hard to compute, it is natural to ask whether $\mu(\cM)$ and $\eta(\cM)$ are at least within a multiplicative factor of each other. We prove several results of this form. For binary matroids, the comparison is one-sided. We prove
\begin{equation*}
        \mu(\cM)\leq \eta(\cM)
\end{equation*}
(\Cref{thm:binary-bound}). Thus, although equality may fail, the join parameter is still bounded above by the ear-decomposition parameter throughout the binary class.

For paving matroids, the two parameters are comparable by absolute constants. We prove
\begin{equation*}
        \frac{2}{3}\mu(\cM)\leq \eta(\cM)\leq 2\mu(\cM),
\end{equation*}
and both constants are best possible (\Cref{thm:paving-bound}).

For arbitrary matroids, no absolute constant comparison holds at this level of generality, but there are tight rank-dependent bounds
\begin{equation*}
        \left\lceil \frac{\mu(\cM)}{2}\right\rceil
        \leq
        \eta(\cM)
        \leq
        \left\lceil \frac{r(\cM)}{2}\right\rceil \mu(\cM)
\end{equation*}
(\Cref{thm:general-bound}), where both bounds are attained.

The main positive result of the paper is a constant-factor analogue of Frank's theorem for regular matroids. We first prove the required estimate for cographic matroids: for every connected cographic matroid,
\begin{equation*}
        \eta(\cM)\leq c_0\mu(\cM),
\end{equation*}
where $c_0<5.5$ (\Cref{thm:cographic-eta-mu-bound}). For regular matroids, we combine Frank's graphic theorem~\cite{frankConservative}, the cographic bound, and the analysis of $R_{10}$ (\Cref{prop:r10-example}) with Seymour's decomposition theorem~\cite{seymour1980decomposition} to obtain
\begin{equation*}
        \eta(\cM)\leq 6\mu(\cM)-2
\end{equation*}
for regular matroids (\Cref{cor:regular}).

Taken together, these results give a precise picture of what remains of Frank's theorem beyond graphic matroids. The exact min--max equality does not survive as a general matroidal statement, and the join parameter becomes computationally difficult even for cographic matroids. Nevertheless, for several fundamental classes of matroids, including binary, paving, cographic, and regular matroids, the join and ear-decomposition parameters remain quantitatively comparable.

\subsection{Organization}
\label{sec:organization}

The paper is organized so as to first explain why one cannot expect a direct matroidal extension of Frank's min--max theorem, and then to develop the approximation results that remain possible. Thus, although the main positive result concerns regular matroids, we begin with the limitations of the equality: counterexamples, non-minor-closedness, and hardness. The hardness proofs are somewhat technical and are logically independent of most of the later comparison bounds; readers mainly interested in the structural approximation results may read the statements of the hardness results in \Cref{sec:limits}, skip their proofs, and then proceed to \Cref{sec:other,sec:regular}.

\Cref{sec:prelim} gives an overview of the required background on matroids, paving matroids, Seymour's decomposition theorem, ear decompositions, and the connection between cographic joins, covering radius, and maximum frustration. In \Cref{sec:limits}, we show that Frank's equality does not extend to general matroids: we give cographic and paving examples separating $\mu$ and $\eta$, discuss non-minor-closedness of the equality, and prove hardness results for maximum joins in cographic and sparse paving matroids. \Cref{sec:other} proves comparison bounds for binary, paving, and arbitrary connected matroids. \Cref{sec:regular} is devoted to regular matroids: after preliminary reductions, we establish the cographic bound, consider the matroid $R_{10}$, and combine these ingredients with Seymour's decomposition theorem to obtain the regular-matroid bound. We conclude the paper in \Cref{sec:conclusion}.

\section{Preliminaries}
\label{sec:prelim}

\paragraph{Basic notation and definitions.}

Given a ground set $E$, the \emph{difference} of $X,Y\subset E$ is denoted by $X-Y$. If $Y$ consists of a single element $y$, then $X-\{y\}$ and $X\cup\{y\}$ are abbreviated as $X-y$ and $X+y$, respectively. The \emph{symmetric difference} of $X$ and $Y$ is defined as $X\triangle Y\coloneqq (X\setminus Y)\cup(Y\setminus X)$.

\paragraph{Graphs.}

For a graph $G=(V,E)$, let $\delta_G(v)$ denote the \emph{set of edges incident to a vertex} $v\in V$, and let $d_G(v)=|\delta_G(v)|$ be the \emph{degree} of $v$. More generally, for $X\subseteq V$, let $\delta_G(X)$ denote the set of edges with exactly one endpoint in $X$. We omit the subscript when the graph is clear from the context. A \emph{cut} of $G$ is a set of the form $\delta_G(X)$ for some $X\subseteq V$. An inclusionwise minimal nonempty cut is called a \emph{bond}. The complete graph on $n$ vertices is denoted by $K_n$, and the complete bipartite graph with parts of size $m$ and $n$ is denoted by $K_{m,n}$. A \emph{cubic graph} is a graph in which every vertex has degree three. For a graph or hypergraph $G$, the \emph{independence number} $\alpha(G)$ is the maximum size of a stable set of vertices in $G$, and the \emph{vertex cover number} $\tau(G)$ is the minimum size of a set of vertices meeting every edge or hyperedge of $G$. We will use the following well-known identity. 

\begin{thm}[Gallai~\cite{Gallai1959,West2001}] \label{thm:gallai} 
Let $G$ be a finite graph, or more generally a finite hypergraph with no empty hyperedge. Then $\alpha(G)+\tau(G)=|V(G)|$.
\end{thm}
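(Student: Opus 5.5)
The plan is to prove the identity by showing that complements of stable sets are exactly the vertex covers. Concretely, I will establish the two inequalities $\alpha(G)+\tau(G)\le |V(G)|$ and $\alpha(G)+\tau(G)\ge |V(G)|$ from the single observation that, for $S\subseteq V(G)$, the set $S$ is stable if and only if $V(G)-S$ is a vertex cover. Here a set is called stable if it contains no (hyper)edge entirely.

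First I will verify this observation. Suppose $S$ is stable and $e$ is any hyperedge. Then $e\not\subseteq S$, so $e$ contains a vertex of $V(G)-S$, and hence $V(G)-S$ meets $e$. Conversely, suppose $V(G)-S$ meets every hyperedge. Then no hyperedge is contained in $S$, so $S$ is stable.

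The hypothesis that there is no empty hyperedge is what keeps both notions consistent. An empty hyperedge can never be met by any vertex set, so no vertex cover would exist. It would also be contained in every set, so no set would be stable, not even $\emptyset$. With that excluded, both $\tau(G)$ and $\alpha(G)$ are well defined, since $V(G)$ is a cover and $\emptyset$ is stable. For ordinary graphs, loops are edges of size one, and the argument goes through unchanged.

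Next I will conclude the identity. Take a maximum stable set $S$. Its complement is a cover, so $\tau(G)\le |V(G)|-\alpha(G)$. Now take a minimum cover $T$. Its complement is stable, so $\alpha(G)\ge |V(G)|-\tau(G)$. Together these give equality.

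The only point requiring care is the correct reading of "stable set" in hypergraphs, namely containing no hyperedge entirely rather than meeting each hyperedge in at most one vertex. This is the convention under which the complementation argument works, and I will state it explicitly. There is no real obstacle beyond this.
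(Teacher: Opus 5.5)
Your proof is correct. A set is stable (contains no hyperedge) if and only if its complement meets every hyperedge, and this complementation argument is the standard proof of Gallai's identity; the paper gives no proof of its own and simply cites the result as well known. Your reading of ``stable'' as ``containing no hyperedge entirely'' is also the convention the paper uses when it applies the identity, for instance in Claim~\ref{cl:P13} to obtain $\tau(\mathcal{P})=13-\alpha(\mathcal{P})$.
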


\paragraph{Matroids.}

For basic definitions on matroids, we refer the reader to~\cite{oxleyMatroid}. A \emph{matroid} $\cM=(E,\cI)$ is defined by its \emph{ground set} $E$ and its \emph{family of independent sets} $\cI\subseteq 2^E$ that satisfies the \emph{independence axioms}: (I1) $\emptyset\in\cI$, (I2) $X\subseteq Y,\ Y\in\cI\Rightarrow X\in\cI$, and (I3) $X,Y\in\cI,\ |X|<|Y|\Rightarrow\exists e\in Y\setminus X\ s.t.\ X+e\in\cI$. Members of $\cI$ are called \emph{independent}, while sets not in $\cI$ are called \emph{dependent}. The \emph{rank} $r_\cM(X)$ of a set $X$ is the maximum size of an independent set in $X$. The maximal independent subsets of $E$
are called \emph{bases}. 

A \emph{circuit} is an inclusionwise minimal dependent set, while a \emph{loop} is a circuit consisting of a single element. For any basis $B$ and nonloop element $e\in E\setminus B$, $B+e$ contains a unique circuit that is called the \emph{fundamental circuit of $e$ with respect to $B$}.  
Two elements $e, f \in E$ are \emph{parallel} if they form a circuit of size two. A circuit
of size three is called a \emph{triangle}. A \emph{cycle} of a matroid is a (possibly empty) subset of its ground set which can be partitioned into circuits. For a matroid $\cM$, we denote its families of independent sets, bases and circuits by $\cI(\cM)$, $\cB(\cM)$ and $\cC(\cM)$, respectively. 
The \emph{dual} of $\cM$ is the matroid $\cM^*$ with $\cB(\cM^*)=\{B\subseteq E\mid E\setminus B\in\cB(\cM)\}$. 

\begin{thm}[Bryant, Dawson and Perfect~\cite{bryant1978hereditary}]
\label{thm:triangle-circuit}
Let $\cM$ be a matroid in which every independent $2$-set is contained in a $3$-circuit. Then every independent set $A\subseteq E(\cM)$ with $|A|\ge 2$ is contained in a circuit of size $|A|+1$.
\end{thm}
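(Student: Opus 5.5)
We prove the statement by induction on $|A|$, using only the circuit elimination axiom and the hypothesis. The case $|A|=2$ is exactly the hypothesis: an independent $2$-set lies in a $3$-circuit. So let $|A|=k\ge 3$ and assume the claim for independent sets of size $k-1\ge 2$. We may assume $A$ is not spanning; otherwise there is no room for a $(k+1)$-circuit containing $A$. Throughout we are free to replace the circuit found for a subset of $A$ using the triangle hypothesis.

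\textbf{Setting up the elimination.} Pick distinct $a,b\in A$ and put $A'=A-a$. By induction, $A'$ lies in a circuit $C'=A'+x$ with $|C'|=k$, so $x\notin A$ and $x\in \clo(A')$. Since $A$ is independent, $a\notin\clo(A')$. Hence $\{a,x\}$ is independent: $x$ is not a loop, because it lies in a circuit of size $k\ge 3$, and $a$ and $x$ are not parallel, since otherwise $a\in\clo(A')$. By the hypothesis there is a triangle $T=\{a,x,y\}$.

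\textbf{Eliminating $x$.} Since $x\in C'\cap T$, circuit elimination gives a circuit
\[
D\subseteq (C'\cup T)-x = A'\cup\{a,y\}=A+y .
\]
First, $y\notin A$. If $y\in A'$, then $a\in\clo(\{x,y\})\subseteq\clo(A')$, which is impossible. Also $y\ne a$, since $T$ has three distinct elements. Next, $D$ must contain $y$, because $A$ is independent. So $D=A''+y$ for some $A''\subseteq A$. If $A''=A$, then $D$ is the desired circuit of size $k+1$ and we are done.

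\textbf{Main obstacle.} The difficulty is that $D$ may be a proper subset of $A+y$; that is, $y$ may lie in the closure of a proper subset of $A$. In that case I plan to iterate. Since $y\in\clo(A'')$, the set $A+y$ contains a unique circuit, namely $D$. I would then apply the same triangle trick to an element $c\in A\setminus A''$ and to $y$. The set $\{c,y\}$ is independent, so there is a triangle $\{c,y,z\}$. Eliminating $y$ between this triangle and $D$ yields a circuit inside $A''\cup\{c,z\}$, which is again of the form $(\text{subset of }A)+z$. It contains $c$ as long as its support is not forced back inside $A''+z$.

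To control this, I would consider, among all elements $y\notin A$ whose fundamental circuit with respect to $A$ (in $\clo(A)$) is defined, one whose circuit $D_y\subseteq A+y$ has maximum size. Suppose for contradiction that $|D_y|<k+1$. Choosing $c\in A\setminus D_y$ and a triangle $\{c,y,z\}$, I would use strong circuit elimination to obtain a circuit through $c$ inside $(D_y\cup\{c,z\})-y$. Uniqueness of the circuit in $A+z$ then shows that $D_z$ contains $c$ together with all of $D_y-y$, so $|D_z|>|D_y|$, a contradiction. This yields a circuit $A+y$ of size $|A|+1$. I expect the delicate point to be verifying that $D_z$ really contains all of $D_y\cap A$. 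For this I would use strong elimination with the element $d\in D_y\cap A$ kept, applied for each $d$, together with the uniqueness of the fundamental circuit. This maximality argument may in fact make the induction unnecessary, since it only uses the hypothesis and works directly on $A$.
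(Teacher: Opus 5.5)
The paper does not prove this statement. It quotes it from Bryant, Dawson and Perfect and uses it only once, in the proof of \Cref{thm:general-bound}, with $A$ a basis $B$, to obtain a spanning circuit $B+e$. So there is no in-paper proof to compare with. Judged on its own, your final extremal argument is correct and proves the theorem by itself. The induction in your first two paragraphs (the set $A'$, the circuit $C'$, the elimination of $x$) can be discarded.

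There is one real error. You claim you may assume $A$ is not spanning, because otherwise there is no room for an $(|A|+1)$-circuit containing $A$. That is false: for a basis $A$, a circuit $A+e$ is simply a spanning circuit, and this is exactly the case the paper needs. Nothing in your argument uses non-spanning-ness, since everything happens inside $\clo(A)$, so just delete that sentence. Also, uniqueness of the circuit in $A+y$ comes from the independence of $A$, not from $y\in\clo(A'')$.

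The step you call delicate does go through. For completeness, here are the checks.
\begin{itemize}
\item Let $Y=\clo(A)\setminus A$, and for $y\in Y$ let $D_y$ be the unique circuit in $A+y$. A triangle $\{a,b,y\}$ with $a,b\in A$ shows that $Y\neq\emptyset$ and $\max_{y\in Y}|D_y|\ge 3$. Hence a maximizer $y$ is not a loop.
\item Suppose $|D_y|\le|A|$ and take $c\in A\setminus D_y$. Then $\{c,y\}$ is independent, since a $2$-circuit $\{c,y\}\subseteq A+y$ would equal $D_y$.
\item In the triangle $\{c,y,z\}$ we have $z\in\clo(A)$. Also $z\notin A$, since otherwise $\{c,y,z\}\subseteq A+y$ would equal $D_y$. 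Hence $D_y\cap\{c,y,z\}=\{y\}$.
\item Apply strong elimination of $y$ between $D_y$ and $\{c,y,z\}$, keeping either $c$ or any prescribed $d\in D_y-y$. Each resulting circuit lies in $(D_y-y)\cup\{c,z\}\subseteq A+z$, so by uniqueness it equals $D_z$.
\item Therefore $D_z\supseteq(D_y-y)\cup\{c,z\}$, so $|D_z|\ge|D_y|+1$, contradicting maximality. Hence $D_y=A+y$.
\end{itemize}
Read iteratively, the argument starts from a triangle on two elements of $A$ and absorbs one further element of $A$ per step. It reaches a circuit $A+y$ after $|A|-2$ steps.
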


Define $\clo_M(X)=\{x\in E\colon r_M(X\cup x)=r_M(X)\}$, we call $\clo_M(X)$ the \emph{closure} of $X$ in $\cM$. A set $X$ is called a \emph{flat} if $\clo_M(X)=X$, and is a \emph{hyperplane} if it further satisfies $r_M(X)=r_M(E)-1$. A \emph{circuit-hyperplane} is a hyperplane that is also a circuit. We denote by $\si(\cM)$ the \emph{simplification} of $\cM$, which is obtained from $\cM$ by deleting all loops and replacing each parallel class by a single element. 

Let $\cM=(E,\cI)$ be a matroid and $E',E''\subseteq E$. The \emph{restriction to} $E'$ and the \emph{deletion of} $E\setminus E'$ result in the same matroid $\cM|E'=\cM\setminus(E\setminus E')=(E',\cI')$ with independence family $\cI'=\{I\in\cI\colon I\subseteq E'\}$. The \emph{contraction to} $E''$ and the \emph{contraction of} $E\setminus E''$ result in the same matroid $\cM. E''=\cM/(E\setminus E'')=(E'',\cI'')$ where $\cI''=\{I\in\cI\colon I\subseteq E'',I\cup Z\in\cI\text{ for any }Z\in\cI\text{ with }Z\subseteq E\setminus E''\}$. A matroid $\cN$ that can be obtained from $\cM$ by a sequence of restrictions and contractions is called a \emph{minor} of $\cM$. 

A matroid is \emph{representable over some field} $\mathbb{F}$ if there exists a family of vectors from a vector space over $\mathbb{F}$ whose linear independence relation is the same as the independence relation of the matroid. The matroid is \emph{binary} if it is representable over $GF(2)$, and is \emph{regular} if it can be represented over any field. The \emph{complete binary matroid} of rank $r$ is the matroid
$\cB_r$ represented over $\mathbb F_2$ by all nonzero vectors
of $\mathbb F_2^r$. The following lemma gives a characterization of binary matroids in terms of cycles, see e.g.~\cite[Theorem~9.1.2]{oxleyMatroid}.

\begin{thm} \label{thm:bin}
A matroid is binary if and only if $C_1 \triangle C_2$ is a cycle for any cycles $C_1,C_2$.
\end{thm}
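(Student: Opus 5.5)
The plan is to prove the two implications of \Cref{thm:bin} separately, using the standard fact that $\cM$ is binary if and only if every circuit meets every cocircuit in an even number of elements.

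\emph{Binary implies closure under symmetric difference.} Take a representation of $\cM$ over $GF(2)$ by vectors $v_e$, $e\in E$. The first step is to observe that a set $X$ is a cycle exactly when $\sum_{e\in X} v_e=0$. If $X$ is a disjoint union of circuits, the sum is zero because each circuit is a minimal dependent set over $GF(2)$, so its vectors sum to zero. Conversely, if the sum is zero and $X\neq\emptyset$, then $X$ is dependent and contains a circuit $C$. The set $X\setminus C$ again has zero sum, so induction on $|X|$ splits $X$ into disjoint circuits. Since $\sum_{e\in C_1\triangle C_2} v_e=\sum_{C_1}v_e+\sum_{C_2}v_e=0$, the set $C_1\triangle C_2$ is a cycle.

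\emph{Closure under symmetric difference implies binary.} Assume the cycles form a family closed under $\triangle$; equivalently, they form a subspace $\mathcal{Z}$ of $GF(2)^E$. I will show that every circuit $C$ and every cocircuit $D$ satisfy $|C\cap D|\neq 1$; if that holds, the standard characterization (Tutte; Oxley Thm.~9.1.2, condition (iii)) gives that $\cM$ is binary.

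Take a basis $B$ and write $C_e$ for the fundamental circuit of $e$ with respect to $B$, for each $e\notin B$. Given any cycle $Z$, the set
\[
Z' = Z\,\triangle\,\mathop{\triangle}_{e\in Z\setminus B} C_e
\]
is again a cycle by hypothesis. Each $e\in Z\setminus B$ lies in $Z$ and in exactly one of the $C_e$'s, so it is cancelled, and $Z'\subseteq B$. Since $B$ is independent and contains no circuit, $Z'=\emptyset$. Hence the fundamental circuits span $\mathcal{Z}$.

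Now suppose $|C\cap D|=\{x\}$ for some circuit $C$ and cocircuit $D$. The complement $E\setminus D$ is a hyperplane, so $C-x\subseteq E\setminus D$. I would like to choose a basis $B\supseteq C-x$ with $|B\cap D|=1$, so that every fundamental circuit $C_e$ with $e\notin B$ meets $D$ in an even number of elements.

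The direct route to this choice, and the most awkward step, is to construct $B$ and verify the parity condition. I would avoid it by the standard alternative. Add to the previous step the claim that a set $Z$ with $|Z\cap D'|$ even for all cocircuits $D'$ is a cycle. Its proof is again induction on $|Z|$: a nonempty such $Z$ cannot be coindependent-like, so it contains a circuit, and one removes that circuit and continues.

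Then, conversely, I would take the $GF(2)$ matrix whose rows are the incidence vectors of the fundamental cocircuits with respect to some basis $B$. Each row is $D_b$ for $b\in B$, and together they form an identity block on $B$. I would check that this matrix represents $\cM$: a set is dependent in the column matroid exactly when it contains a nonempty set $Z$ with $|Z\cap D_b|$ even for all $b$. Using the closure hypothesis to express each $D_b$-parity condition through cycles, such $Z$ are precisely the cycles.

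This final identification is what I expect to be the main obstacle. If it does not go through cleanly, I would cite the proof of \cite[Theorem~9.1.2]{oxleyMatroid} for this equivalence.
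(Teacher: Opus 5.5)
Your forward direction is correct. The paper gives no proof of this statement, only the pointer to \cite[Theorem~9.1.2]{oxleyMatroid}. The converse, however, has a genuine gap.

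First, the criterion you reduce to is false. The condition $|C\cap D|\ne 1$ holds for every circuit $C$ and cocircuit $D$ of \emph{every} matroid: if $C\cap D=\{x\}$, then $C-x$ lies in the hyperplane $E-D$, so $x\in\clo(C-x)\subseteq E-D$. For instance, $U_{2,4}$ satisfies it and is not binary. The correct condition is that $|C\cap D|$ is \emph{even}. So a contradiction derived from $C\cap D=\{x\}$ says nothing about binarity.

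Second, the ``standard alternative'' is circular. After removing a circuit $C$ from $Z$, the set $Z-C$ still meets every cocircuit evenly only if $C$ does, and that is exactly the property at stake.

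Third, the matrix identification, which carries all the content, is left as an open ``main obstacle'' with a fallback to the citation. So the converse is not established.

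The repair uses only ingredients you already have. Fix a basis $B$. For $b\in B$, let $D_b$ be the fundamental cocircuit, so $D_b\cap B=\{b\}$. For $e\notin B$ we have $C_e\cap D_b\subseteq\{b,e\}$. Since no circuit meets a cocircuit in exactly one element, this gives $|C_e\cap D_b|\in\{0,2\}$.

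You proved that every cycle $Z$ equals $\mathop{\triangle}_{e\in Z-B}C_e$. Because parity of intersection is additive under symmetric difference, every cycle therefore meets each $D_b$ evenly.

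Conversely, suppose $Z$ meets each $D_b$ evenly. Then $Z'=Z\triangle\mathop{\triangle}_{e\in Z-B}C_e$ also meets each $D_b$ evenly, and your cancellation argument gives $Z'\subseteq B$. Since $Z'\cap D_b=Z'\cap\{b\}$, this forces $Z'=\emptyset$, so $Z$ is a cycle by the closure hypothesis.

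Hence the null space of the $GF(2)$ matrix whose rows are the incidence vectors of the $D_b$ is exactly the cycle space. Its column matroid and $\cM$ then have the same dependent sets, namely those containing a nonempty cycle, so $\cM$ is binary.

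Alternatively, every cocircuit $D$ equals $D_b$ for the basis obtained from a basis of $E-D$ by adding some $b\in D$. So every circuit meets every cocircuit evenly, and the evenness criterion applies directly.
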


The matroid $R_{10}$ is a binary matroid that can be represented as the ten vectors in the five-dimensional vector space over $GF(2)$ that have exactly three nonzero entries. The \emph{Fano matroid} $F_7$ is obtained from the Fano plane by calling a set independent if it contains at most two points or it has three points which are not lines of the plane. In other words, $F_7$ is the matroid with ground set $E=\{a,b,c,d,e,f,g\}$ whose bases are all subsets of size $3$ except $\{a,b,d\}$, $\{b,c,e\}$, $\{a,c,f\}$, $\{a,e,g\}$, $\{c,d,g\}$, $\{b,f,g\}$ and $\{d,e,f\}$.

The \emph{uniform matroid} $\cM=(E,\cI)$ of rank $r$ is defined as $\cI=\{I\subseteq E\colon |I|\leq r\}$. For a graph $G=(V,E)$, the \emph{graphic matroid} $\cM=(E,\cI)$ of $G$ is defined on the edge set by considering a subset $F\subseteq E$ to be independent if it is a forest, that is, $\cI=\{F\subseteq E\colon F\ \text{does not contain a cycle}\}$. 
A matroid $\cM=(E,\cI)$ of rank $r(\cM)$ is called a \emph{paving matroid} if every circuit of $\cM$ has size at least $r(\cM)$. A paving matroid is \emph{sparse} if its dual is also paving. The structure of paving and sparse paving matroids is described by the following theorem~\cite{hartmanis1959lattice,welsh2010matroid,frank2011connections}.

\begin{thm}[Hartmanis~\cite{hartmanis1959lattice}] \label{thm:pav} 
Let $r\ge 2$, and let $\mathcal H$ be a possibly empty family of proper subsets of a set $E$ with $|E|\ge r$. Suppose that every $H\in\mathcal H$ has size at least $r$, and that $|H\cap H'|\le r-2$ for all distinct $H,H'\in\mathcal H$. Define 
\[ \mathcal B_{\mathcal H} = \bigl\{B\in\textstyle\binom{E}{r}\colon B\nsubseteq H\text{ for every }H\in\mathcal H\bigr\}. \] 
Then $\mathcal B_{\mathcal H}$ is the set of bases of a rank-$r$ paving matroid on $E$ whose circuit-hyperplanes are exactly the size-$r$ hyperedges of $\cH$, which is sparse paving exactly if $|H|=r$ for each $H\in\mathcal{H}$. Conversely, every paving and sparse paving matroid arises in this way. 
\end{thm}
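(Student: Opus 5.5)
The statement to prove is Hartmanis's characterization (\Cref{thm:pav}). There are two directions: the forward construction from $\mathcal H$, and the converse that every (sparse) paving matroid arises this way.

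\textbf{Forward direction: basis exchange.} First I would check that $\mathcal B_{\mathcal H}$ is nonempty. Take any $r$-set; if it lies in some $H$, swap one of its elements for an element outside $H$, which exists since $H\neq E$. Two distinct members of $\mathcal H$ share at most $r-2$ elements, so the new $r$-set, having $r-1$ elements in $H$, cannot lie in another $H'$. It also no longer lies in $H$.

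For the exchange axiom, let $B_1,B_2\in\mathcal B_{\mathcal H}$ and $x\in B_1\setminus B_2$. Consider the candidates $B_1-x+y$ for $y\in B_2\setminus B_1$. A candidate fails only if $B_1-x+y\subseteq H$ for some $H\in\mathcal H$. Any such $H$ contains $B_1-x$, which has $r-1$ elements. By the intersection condition, at most one $H$ can contain $B_1-x$, so all failures come from a single $H_0$. If every $y\in B_2\setminus B_1$ fails, then $B_2\setminus B_1\subseteq H_0$. We also have $B_2\cap B_1\subseteq B_1-x\subseteq H_0$, because $x\notin B_2$. Together these give $B_2\subseteq H_0$, a contradiction.

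\textbf{Forward direction: circuits, paving, sparseness.} Every $(r-1)$-set is independent. Indeed, if $X$ is an $(r-1)$-set, at most one $H$ contains $X$, and adding an element outside that $H$ (or any element if there is none) gives a basis containing $X$. Hence every circuit has size at least $r$, so the matroid is paving.

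The dependent $r$-sets are exactly the $r$-sets lying in some $H$. So the $r$-element circuits are the $r$-subsets of the $H$'s, and the closure of each $H$ is a hyperplane. When $|H|=r$, the set $H$ is a circuit and has rank $r-1$. It is closed: adding $e$ yields an $r$-set $H-h+e$, which is a basis unless it lies in some $H'\neq H$, and that is impossible because $|H\cap H'|\ge r-1$. So $H$ is a circuit-hyperplane. When $|H|>r$, $H$ is not a circuit. Conversely, a circuit-hyperplane is an $r$-circuit, so it lies in some $H$, and being closed it must equal it. Therefore the circuit-hyperplanes are exactly the size-$r$ members of $\mathcal H$.

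For sparseness, the dual is paving iff every cocircuit has size at least $|E|-r$, i.e.\ every hyperplane has size at most $r$. The hyperplanes of this matroid are the members of $\mathcal H$ together with the $(r-1)$-sets not covered by any member. So the dual is paving exactly when all $|H|=r$.

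\textbf{Converse.} Given a rank-$r$ paving matroid with $r\ge2$, let $\mathcal H$ be the family of hyperplanes of size at least $r$. These are proper subsets of $E$. Two distinct hyperplanes meet in a flat of rank at most $r-2$. That flat is independent because every circuit has size at least $r$, so it has at most $r-2$ elements. An $r$-set is dependent iff it has rank $r-1$, iff its closure is a hyperplane containing it, and that hyperplane then has size at least $r$. Hence $\mathcal B_{\mathcal H}$ is exactly the set of bases. In the sparse case, duality forces every hyperplane to have size at most $r$, so all members of $\mathcal H$ have size exactly $r$.

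The main obstacle I expect is the converse: verifying that hyperplanes of a paving matroid meet in at most $r-2$ elements, which relies on low-rank flats being independent. Everything else is routine once the uniqueness of the member of $\mathcal H$ containing a given $(r-1)$-set is noted.
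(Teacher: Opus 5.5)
The paper gives no proof of this statement: it is quoted as a classical result (Hartmanis, with pointers to Welsh and Frank), so there is no in-paper argument to compare yours against. Judged on its own, your proof is correct and follows the standard route. The forward direction rests on one observation: an $(r-1)$-set lies in at most one member of $\mathcal H$. That observation gives you nonemptiness, basis exchange, independence of all $(r-1)$-sets, and the identification of the circuit-hyperplanes. The converse correctly uses that two distinct hyperplanes meet in a flat of rank at most $r-2$, and that such a flat is independent in a paving matroid.

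Two small points need tightening in the sparseness step. There you assert that the hyperplanes are exactly the members of $\mathcal H$ together with the uncovered $(r-1)$-sets, but you checked closedness only when $|H|=r$.

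\begin{itemize}
\item The same one-line argument shows every $H\in\mathcal H$ is closed. For $e\notin H$ and any $(r-1)$-subset $X\subseteq H$, the set $X+e$ lies in no member of $\mathcal H$, since $X$ lies only in $H$. So $X+e$ is a basis and $r(H+e)=r$.
\item The direction ``all $|H|=r$ implies the dual is paving'' also needs that every hyperplane $F$ with $|F|\ge r$ belongs to $\mathcal H$. This holds because any $r$-subset $X\subseteq F$ is dependent, hence $X\subseteq H$ for some $H\in\mathcal H$. Both $F$ and $H$ are rank-$(r-1)$ flats containing $X$, so $F=\mathrm{cl}(X)=H$.
\end{itemize}

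You also correctly read the converse as applying only to rank at least $2$. Rank-$0$ and rank-$1$ matroids are trivially paving but are excluded by the hypothesis $r\ge 2$.
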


Two matroids $\cM_1$ and $\cM_2$ are \emph{isomorphic}, denoted by $M_1\cong M_2$, if there is a bijection $\psi\colon E(\cM_1)\to E(\cM_2)$ such that $I\subseteq E(\cM_1)$ is independent in $\cM_1$ if and only if $\psi(I)$ is independent in $\cM_2$.

\paragraph{Seymour's decomposition.}

Let $\cM_1$ and $\cM_2$ be binary matroids on ground sets $E_1$ and $E_2$, respectively, such that $|E_1|,|E_2|<|E_1\triangle E_2|$. Then, we denote by $\cM_1\triangle\cM_2$ the binary matroid on ground set $E=E_1\triangle E_2$ with cycles being the sets of the form $C_1\triangle C_2$ where $C_i$ is a cycle of $\cM_i$ for $i=1,2$ with $C_1\cap E_1\cap E_2 = C_2\cap E_1\cap E_2$.

When $E_1\cap E_2=\emptyset$, then $\cM_1\oplus_1\cM_2\coloneqq \cM_1\triangle\cM_2$ is called the \emph{1-sum} or \emph{direct sum} of $\cM_1$ and $\cM_2$. Its family of bases is
$$
\cB(\cM_1\oplus_1\cM_2)=\{B_1\cup B_2\colon B_1\in\cB(\cM_1),B_2\in\cB(\cM_2)\}.
$$

When $|E_1\cap E_2|=1$, say $E_1\cap E_2=\{t\}$, such that $t$ is neither a loop nor a coloop of $\cM_1$ or $\cM_2$, then $\cM_1\oplus_2\cM_2\coloneqq \cM_1\triangle\cM_2$ is called the \emph{2-sum} of $\cM_1$ and $\cM_2$ along $t$. Its family of bases is
$$
\begin{aligned}
\cB(\cM_1\oplus_2\cM_2)
=&\{B_1'\cup B_2\colon B_1'\in\cB(\cM_1/t),B_2\in\cB(\cM_2\backslash t)\}\\
&\cup\{B_1\cup B_2'\colon B_1\in\cB(\cM_1\backslash t),B_2'\in\cB(\cM_2/t)\}.
\end{aligned}
$$

When $|E_1\cap E_2|=3$ and $E_1\cap E_2=T$ is a coindependent triangle of both $\cM_1$ and $\cM_2$, then $\cM_1\oplus_3\cM_2\coloneqq \cM_1\triangle\cM_2$ is called the \emph{3-sum} of $\cM_1$ and $\cM_2$ along $T$. The matroid $\cM_1\oplus_3\cM_2$ has rank $r(\cM_1)+r(\cM_2)-2$, and its family of bases is
$$
\begin{aligned}
\cB(\cM_1\oplus_3\cM_2)
=&\{B_1''\cup B_2\colon B_1''\in\cB(\cM_1/T),B_2\in\cB(\cM_2\backslash T)\}\\
&\cup\{B_1'\cup B_2'\colon B_1'\subseteq E_1\backslash T,B_2'\subseteq E_2\backslash T,\exists i,j,k\colon \{i,j,k\}=\{1,2,3\},\\
&\qquad B_1'+t_i,B_1'+t_j\in\cB(\cM_1),B_2'+t_i,B_2'+t_k\in\cB(\cM_2)\}\\
&\cup\{B_1\cup B_2''\colon B_1\in\cB(\cM_1\backslash T),B_2''\in\cB(\cM_2/T)\}.
\end{aligned}
$$
Seymour's decomposition theorem gives a constructive characterization of regular matroids.

\begin{thm}[Seymour~\cite{seymour1980decomposition}]\label{thm:seym}
    A matroid is regular if and only if it is obtained by means of 1-, 2-, and 3-sums, starting from graphic and cographic matroids and copies of $R_{10}$.
\end{thm}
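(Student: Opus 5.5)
Seymour's decomposition theorem is a cited result, so I will not reprove its deep direction. I would organise the argument around the two directions and defer to Seymour's paper for the hard one.

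\emph{Easy direction.} I will show that every matroid built from graphic matroids, cographic matroids and copies of $R_{10}$ by 1-, 2- and 3-sums is regular.
\begin{itemize}
\item Graphic and cographic matroids are regular, since an oriented incidence matrix is totally unimodular and regularity is closed under duality.
\item $R_{10}$ is regular: its $GF(2)$ representation lifts to a totally unimodular signed matrix, which can be checked by exhibiting one.
\item Regularity is preserved under $k$-sums for $k\le 3$. Take totally unimodular representations of the two summands. Normalise them so they agree on the common element, or on the common triangle $T$; for $T$ this means choosing signs so that $T$ is represented by $e_1,e_2,e_1+e_2$ in both. Then glue the two matrices in block form. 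One checks that the resulting matrix is totally unimodular via the pivoting and Schur-complement description of $k$-sums, and that it represents $\cM_1\triangle\cM_2$. The latter follows because its cycle space is exactly $\{C_1\triangle C_2\}$ under the stated compatibility condition.
\end{itemize}

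\emph{Hard direction.} Every regular matroid decomposes in this way. The plan follows Seymour.
\begin{enumerate}
\item Reduce to $3$-connected matroids. A matroid that is not $3$-connected is a 1- or 2-sum of smaller minors, and those minors are again regular.
\item Show that a $3$-connected regular matroid that is neither graphic nor cographic, and not $R_{10}$, has an $R_{12}$ minor. This uses the splitter theorem together with the fact that $R_{10}$ is a splitter for regular matroids.
\item Show that an $R_{12}$ minor forces a $3$-separation that is exact in the sense needed. This separation yields a 3-sum decomposition into smaller regular matroids, and induction finishes the argument.
\end{enumerate}

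The main obstacle is steps 2 and 3, which together form the core of Seymour's proof. In the paper I would simply cite~\cite{seymour1980decomposition} for this direction rather than reproduce it.
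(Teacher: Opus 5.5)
Your proposal matches the paper, which also states this theorem as a cited result from Seymour's paper and gives no proof of its own. Your outline of both directions is the standard structure of Seymour's argument and has no errors at the level of detail you give: regularity of graphic, cographic and $R_{10}$ and its closure under $1$-, $2$- and $3$-sums for one direction, and for the converse the reduction to $3$-connected matroids, the $R_{10}$-splitter result, and the exact $3$-separation forced by an $R_{12}$ minor.
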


A matroid $\cM$ is \emph{connected}, or \emph{$2$-connected}, if it is not a $1$-sum of two matroids with nonempty ground sets. It is said to be \emph{3-connected} if it is neither a $1$-sum nor a $2$-sum of two matroids. The following two theorems give a characterization of 2-connected and 3-connected regular matroids.

\begin{thm}[Theorem 8.3.1 of ~\cite{oxleyMatroid}]\label{thm:seym-2}
    A 2-connected matroid $\cM$ is not 3-connected if and only if $\cM = \cM_1 \oplus_2 \cM_2$ for some matroids $\cM_1$ and $\cM_2$, each of which has at least three elements and is isomorphic to a proper minor of $\cM$.
\end{thm}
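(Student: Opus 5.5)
The plan is to prove the two directions separately, working with separations. Recall that a partition $(X,Y)$ of $E(\cM)$ is a \emph{$2$-separation} if $|X|,|Y|\ge 2$ and $r(X)+r(Y)-r(\cM)\le 1$. For a $2$-connected matroid, $\cM$ is $3$-connected exactly when no $2$-separation exists. Connectivity forces $r(X)+r(Y)-r(\cM)\ge 1$ for every nontrivial partition, so a $2$-separation satisfies this with equality.

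\emph{The 2-sum direction.} Suppose $\cM=\cM_1\oplus_2\cM_2$ along $t$, with $|E_i|\ge 3$. Set $X=E_1-t$ and $Y=E_2-t$, so that $|X|,|Y|\ge 2$.
\begin{enumerate}[(i)]
\item From the description of bases of the 2-sum, I read off $r(\cM)=r(\cM_1)+r(\cM_2)-1$.
\item Also $r_\cM(X)=r(\cM_1\backslash t)=r(\cM_1)$, since $t$ is not a coloop; similarly $r_\cM(Y)=r(\cM_2)$.
\item Hence $r(X)+r(Y)-r(\cM)=1$, so $(X,Y)$ is a $2$-separation and $\cM$ is not $3$-connected.
\end{enumerate}

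\emph{The separation direction.} Conversely, let $(X,Y)$ be a $2$-separation of the connected matroid $\cM$. Introduce a new element $p$ and define $\cM_1$ on $X+p$. Its circuits are the circuits of $\cM$ contained in $X$, together with the sets $(C\cap X)+p$ for circuits $C$ of $\cM$ meeting both $X$ and $Y$. Define $\cM_2$ on $Y+p$ symmetrically. There are then three steps.
\begin{enumerate}[(a)]
\item \emph{$\cM_1$ is a matroid.} I would avoid checking the circuit elimination axiom directly. Instead, pick a circuit $C_0$ of $\cM$ meeting both sides, which exists by connectivity, and an element $e\in C_0\cap Y$. I would show that the family above is exactly the circuit family of the minor
\[
\cM/(C_0\cap Y-e)\backslash(Y-C_0),
\]
with $e$ renamed $p$. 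This step uses the rank condition $r(X)+r(Y)=r(\cM)+1$: it forces $\clo(X)\cap\clo(Y)$ to behave like a single point, so contracting $C_0\cap Y-e$ makes $e$ parallel to that point, and all "crossing" traces $C\cap X$ then become circuits together with $e$. The same argument gives the symmetric statement for $\cM_2$.
\item \emph{Sizes and properness.} Since $|Y|\ge 2$, this minor is proper, and $|X+p|\ge 3$; likewise for $\cM_2$. Moreover, $p$ is neither a loop nor a coloop of $\cM_i$, because $C_0$ crosses the partition.
\item \emph{$\cM=\cM_1\oplus_2\cM_2$.} I would compare circuits. Those of $\cM$ inside $X$ or $Y$ match directly. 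A crossing circuit $C$ equals $\bigl((C\cap X)+p\bigr)\triangle\bigl((C\cap Y)+p\bigr)$. In the other direction, gluing any two $p$-circuits gives a circuit of $\cM$, again by the rank condition.
\end{enumerate}

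The main obstacle is step (a), specifically showing that every crossing trace $C\cap X$ becomes a circuit with the same element $e$, independent of the choice of $C$. I would handle it through the rank computation $r_{\cM/(Y-e)}(X)=r(X)-1$, obtained from $r(X)+r(Y)-r(\cM)=1$, combined with the standard fact that every crossing circuit spans the unique "connecting" rank-one flat.
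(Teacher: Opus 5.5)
Your overall architecture is the standard one; the paper gives no proof of its own and simply cites Oxley. The 2-sum direction via $r(\cM)=r(\cM_1)+r(\cM_2)-1$ and $\cM|X=\cM_1\backslash t$ is fine. In the converse, the circuit family you write down for $\cM_1$, its identification with $\cM/((C_0\cap Y)-e)\backslash(Y-C_0)$, and the circuit comparison in (c) are all true statements.

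The gap is exactly at the step you flag as the main obstacle, and the tools you propose there do not work. Both (a) and (c) rest on a single gluing fact: for any two circuits $C,C'$ of $\cM$ meeting both $X$ and $Y$, the set $(C\cap X)\cup(C'\cap Y)$ is a circuit of $\cM$. With $C'=C_0$ this is what makes $(C\cap X)+e$ a circuit of your minor for every crossing $C$. With arbitrary $C,C'$ it is the gluing of two $p$-circuits in (c).

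Your two proposed tools fail in general:
\begin{itemize}
\item The identity $r_{\cM/(Y-e)}(X)=r(X)-1$ is equivalent to $e\in\clo(Y-e)$. It fails for $\cM=U_{3,4}$ split into two pairs: there $Y$ is independent and $r_{\cM/(Y-e)}(X)=2=r(X)$. Moreover, whenever the identity does hold, $e$ is a loop of $\cM/(Y-e)$, so this is not the contraction you want; your minor contracts only $(C_0\cap Y)-e$.
\item There is in general no ``connecting rank-one flat'' in $\cM$: in the same example $\clo(X)\cap\clo(Y)=\emptyset$. That point lives only in an extension of $\cM$, and the existence of such an extension is essentially the statement being proved.
\end{itemize}

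The missing ingredient is a submodularity estimate. For $A\subseteq X$ and $B\subseteq Y$, apply submodularity to $A\cup Y$ and $X\cup B$; their union is $E(\cM)$ and their intersection is $A\cup B$, so
\[
r(A\cup B)\le r(A\cup Y)+r(X\cup B)-r(\cM).
\]
Write $\sqcap(P,Q)=r(P)+r(Q)-r(P\cup Q)$, which is monotone in each argument. For a crossing circuit $C$ we have $1=\sqcap(C\cap X,C\cap Y)\le\sqcap(C\cap X,Y)\le\sqcap(X,Y)=1$. Hence $r((C\cap X)\cup Y)=|C\cap X|+r(Y)-1$, and symmetrically $r(X\cup(C'\cap Y))=r(X)+|C'\cap Y|-1$. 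Substituting gives $r((C\cap X)\cup(C'\cap Y))\le |C\cap X|+|C'\cap Y|-1$, so the glued set is dependent.

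For minimality, suppose $((C\cap X)-a)\cup(C'\cap Y)$ were dependent for some $a\in C\cap X$. Then $\sqcap((C\cap X)-a,Y)=1$. The same estimate with $C\cap Y$ in place of $C'\cap Y$ then gives $r(C-a)\le |C|-2$, a contradiction. Elements of $C'\cap Y$ are handled symmetrically using $C'$.

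The same squeeze, together with $e\in\clo(C_0-e)$, gives $\sqcap(X,(C_0\cap Y)-e)=0$. So contracting $(C_0\cap Y)-e$ leaves $\cM|X$ unchanged, which you also need in (a). With these two facts, (a) and (c) go through exactly as you outline.
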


\begin{thm}[Corollary 13.4.6 of~\cite{oxleyMatroid}]
\label{thm:seym-3}
Let $\cM$ be a 3-connected regular matroid. Then at least one of the following alternatives holds:
\begin{enumerate}[label=(\roman*)]\itemsep=0em
    \item \label{it:i}
    $\cM$ is graphic.
    \item \label{it:ii}
    $\cM$ is cographic.
    \item \label{it:iii}
    $\cM \cong R_{10}$.
    \item \label{it:iv}
    There are regular matroids $\cM_1$ and $\cM_2$ such that
    $E(\cM_1)\cap E(\cM_2)=T$, where $T$ is a triangle of both
    $\cM_1$ and $\cM_2$, and
    $\cM=\cM_1\oplus_3\cM_2$. In addition, for each $i\in\{1,2\}$,
    the following properties hold:
    \begin{enumerate}[label=(\alph*)]\itemsep=0em
        \item \label{it:a}
        $\cM_i$ is internally 3-connected, and every 2-element
        2-separating set of $\cM_i$ meets $T$.
        \item \label{it:b}
        $\cM_i$ is isomorphic to a minor of $\cM$.
        \item \label{it:c}
        $|E(\cM_i)\setminus \clo_{\cM_i}(T)|\ge 6$ and
        $|E(\si(\cM_i))|\ge 9$.
    \end{enumerate}
\end{enumerate}
\end{thm}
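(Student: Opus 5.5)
The plan is to derive this from the deep structural part of Seymour's theory, taking as given the facts in~\cite{seymour1980decomposition,oxleyMatroid} that sit behind \Cref{thm:seym}. Let $\cM$ be a $3$-connected regular matroid that is neither graphic, nor cographic, nor isomorphic to $R_{10}$. The goal is to exhibit $\cM$ as a $3$-sum with the properties \ref{it:a}--\ref{it:c}.

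\textbf{Step 1: find an $R_{12}$ minor.} Seymour's excluded-minor result states that a $3$-connected regular matroid which is neither graphic nor cographic has a minor isomorphic to $R_{10}$ or to $R_{12}$. The Splitter Theorem, together with the fact that $R_{10}$ has no $3$-connected regular single-element extension or coextension, shows that $R_{10}$ is a splitter for regular matroids. Hence, since $\cM\not\cong R_{10}$, $\cM$ has no $R_{10}$ minor, and therefore it has an $R_{12}$ minor $N$.

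\textbf{Step 2: lift a separation from $N$ to $\cM$.} $R_{12}$ has an exact $3$-separation $(X_1,X_2)$ with $|X_1|=|X_2|=6$, in which each side contains a triangle. By Seymour's separation-extension lemma, such an induced exact $3$-separation of a minor of a $3$-connected binary matroid extends to an exact $3$-separation $(Y_1,Y_2)$ of $\cM$ with $X_i\subseteq Y_i$. This gives $|Y_i|\ge 6$.

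\textbf{Step 3: turn the separation into a $3$-sum.} For binary matroids, an exact $3$-separation with both sides of size at least $4$ yields a decomposition $\cM=\cM_1\oplus_3\cM_2$. To build it, adjoin a triangle $T$ spanning $\clo(Y_1)\cap\clo(Y_2)$ inside a common binary representation and set $\cM_i=\cM|(Y_i\cup T)$, suitably projected. Both factors are regular, since regularity is preserved because each $\cM_i$ is isomorphic to a minor of $\cM$. That minor relation, which is \ref{it:b}, comes from contracting or deleting elements of the other side so that it collapses onto $T$; this uses $|Y_j|\ge 4$ and the $3$-connectivity of $\cM$. The bounds $|E(\cM_i)\setminus\clo_{\cM_i}(T)|\ge 6$ and $|E(\si(\cM_i))|\ge 9$ in \ref{it:c} are then read off from the $R_{12}$ pieces $X_i$. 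The point is that the six elements of $X_i$ cannot lie in the closure of $T$ inside the corresponding $3$-sum part of $R_{12}$, and this can be checked directly on $R_{12}$.

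\textbf{Step 4 and main obstacle: property \ref{it:a}.} Among all exact $3$-separations $(Y_1,Y_2)$ of $\cM$ with $X_i\subseteq Y_i$, I would pick one that is extremal, for instance with $|Y_1|$ minimal. I would then argue by contradiction. Suppose $\cM_1$ had a $2$-separation not meeting $T$ in the required way, or a $2$-element $2$-separating set disjoint from $T$, meaning a parallel or series pair inside $Y_1$. Then uncrossing this $2$-separation with $(Y_1,Y_2)$ via submodularity of the connectivity function produces a smaller exact $3$-separation of $\cM$ still containing $X_1$ on one side, or else contradicts the $3$-connectivity of $\cM$. The case analysis in this uncrossing is the delicate part, and getting a clean choice of the extremal separation is where I expect most of the work to lie.
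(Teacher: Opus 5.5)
\textbf{Overall.} The paper does not prove this statement; it quotes it from Oxley's book (Corollary 13.4.6). Your Steps 1--3 follow the standard Seymour route behind that corollary, and they are acceptable as citations, with one correction. The extension of the $R_{12}$ separation in Step 2 is not a lemma about $3$-connected binary matroids. It is Seymour's theorem for \emph{regular} matroids, and regularity is essential there. Indeed, adjoining to $R_{12}$ any binary vector $e$ outside $\spa(X_1)\cup\spa(X_2)$ gives a $3$-connected binary matroid in which neither $(X_1+e,X_2)$ nor $(X_1,X_2+e)$ is a $3$-separation. A smaller point: in the usual separation of $R_{12}$ only one side contains triangles (the union of the two triangles; the other side is a union of two triads). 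You never use that claim, so it does no harm.

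\textbf{The gap: property (a).} Property (a) is left unproved, and the extremal-choice/uncrossing plan aims at a difficulty that is not there. The missing observation is that $\lambda_{\cM_1}(A)=\lambda_{\cM}(A)$ for every $A\subseteq Y_1$, where $\lambda_N(A)=r_N(A)+r_N(E(N)\setminus A)-r(N)$.
\begin{itemize}
\item To see this, represent $\cM_1$ by the vectors of $Y_1$ together with the three nonzero vectors $T$ of $W=\spa(Y_1)\cap\spa(Y_2)$. Then $\spa((Y_1\setminus A)\cup T)\cap\spa(Y_2)=W$, and a rank count gives the identity.
\item Now let $(A,B)$ be a $2$-separation of $\cM_1$ with $|B\cap T|\ge 2$. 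If $T\subseteq B$, then $A\subseteq Y_1$ is $2$-separating in $\cM$, contradicting $3$-connectivity.
\item Otherwise $A\cap T=\{t\}$ with $t\in\clo_{\cM_1}(B)$. Then $(A-t,B+t)$ is still $2$-separating, and the first case forces $|A|=2$.
\item By the same identity, a coloop $y\in Y_1$ of $\cM_1$ would be a coloop of $\cM$.
\end{itemize}
This gives connectivity, internal $3$-connectivity, and that every $2$-element $2$-separating set meets $T$. It holds for \emph{every} such $3$-sum, with no extremal choice.

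\textbf{The gap: property (c).} Property (c) cannot simply be ``checked on $R_{12}$'', because you never relate the $3$-sum parts of $R_{12}$ to the $\cM_i$. The transfer again goes through $\lambda$.
\begin{itemize}
\item First, $\clo_{\cM_1}(T)\cap Y_1=Y_1\cap\clo_{\cM}(Y_2)$.
\item If $x\in X_1\cap\clo_{\cM}(Y_2)$, then $(Y_1-x,Y_2+x)$ is $3$-separating in $\cM$. Hence $(X_1-x,X_2+x)$ is $3$-separating in the minor $N\cong R_{12}$.
\item This is false: each $N|X_i$ has no coloops and $X_i\cap\clo_N(X_j)=\emptyset$, so $\lambda_N(X_1-x)=4+5-6=3$. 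The case $x\in X_2$ is symmetric.
\end{itemize}
Thus $|E(\cM_i)\setminus\clo_{\cM_i}(T)|\ge 6$. The bound $|E(\si(\cM_i))|\ge 9$ then follows, because these six elements are pairwise nonparallel ($\cM$ is simple) and not parallel to any element of $T$.
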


\paragraph{Ear decomposition of matroids.}

Let $\cM$ be a matroid on ground set $E$, and let
$C_1,\dots,C_t$ be a nonempty sequence of circuits. Set
$D_0=\varnothing$ and $D_i=\bigcup_{j=1}^i C_j$. The sequence is a
\emph{partial ear decomposition} if, for every $i\ge2$,
\begin{enumerate}[label=(E\arabic*)]\itemsep0em
        \item $C_i\cap D_{i-1}\ne\varnothing$, \label{it:e1}
        \item $C_i\setminus D_{i-1}\ne\varnothing$, \label{it:e2}
        \item no circuit satisfying \ref{it:e1} and \ref{it:e2}
        has its new elements properly contained in
        $C_i\setminus D_{i-1}$. \label{it:e3}
\end{enumerate}
We call the sequence an \emph{ear decomposition} if $D_t=E$. The set
$P_i=C_i\setminus D_{i-1}$ is the $i$th \emph{lobe}. A lobe is odd or
even according to its cardinality.

It is known that a matroid is connected if and only if any two distinct elements of $\cM$ are contained in a common circuit; this condition is vacuous for one-element matroids. Therefore, the one-element rank-$1$ uniform matroid $U_{1,1}$ is the only connected matroid without an ear decomposition. This exceptional case is the reason why, in statements involving ear decompositions or the parameter $\eta$, we must exclude $U_{1,1}$.

We will use the following lemma, which is a matroidal analogue of the well-known fact that the graphic matroid of a graph is connected if and only if the graph has an ear decomposition.

\begin{thm}[Coullard and Hellerstein~\cite{coullard1996independence}]
\label{thm:ear}
Let $\cM$ be a matroid not isomorphic to $U_{1,1}$. Then $\cM$ has an ear decomposition if and only if it is connected. Moreover, if $\cM$ is connected, then every partial ear decomposition of $\cM$ extends to an ear decomposition.
\end{thm}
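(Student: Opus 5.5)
The plan is to prove the two directions separately. The ``moreover'' statement will come out of the proof of the converse direction.

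\emph{Ear decomposition implies connected.} We use the standard fact that the relation ``$x=y$ or $x,y$ lie in a common circuit'' is an equivalence relation, whose classes are the components of $\cM$ (see \cite[Proposition~4.1.2]{oxleyMatroid}). Let $C_1,\dots,C_t$ be an ear decomposition, and show by induction on $i$ that $D_i$ lies inside a single equivalence class.
\begin{itemize}
\item For $i=1$ this is clear, since any two elements of $C_1$ share the circuit $C_1$.
\item For $i\ge 2$, all elements of $C_i$ are pairwise related. By \ref{it:e1}, $C_i$ contains an element of $D_{i-1}$, which by induction lies in a single class. Transitivity then places all of $D_i=D_{i-1}\cup C_i$ in that class.
\end{itemize}
Since $D_t=E$, all of $E$ is one class, so $\cM$ is connected. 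We do not even need \ref{it:e3} for this direction.

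\emph{Connected implies ear decomposition, and extension.} Let $\cM$ be connected and $\cM\not\cong U_{1,1}$. Then $|E|\ge 2$. Take distinct $x,y\in E$; connectivity gives a circuit containing both, so circuits exist, and any single circuit $C_1$ is a partial ear decomposition. It therefore suffices to prove that every partial ear decomposition $C_1,\dots,C_i$ with $D_i\neq E$ can be extended by one circuit $C_{i+1}$ satisfying \ref{it:e1}--\ref{it:e3}.

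To do this, pick $e\in E\setminus D_i$ and $f\in D_i$. By connectivity some circuit contains both $e$ and $f$, so it satisfies \ref{it:e1} and \ref{it:e2} with respect to $D_i$. Hence the family $\mathcal F$ of circuits satisfying \ref{it:e1} and \ref{it:e2} is nonempty. Choose $C_{i+1}\in\mathcal F$ with $|C_{i+1}\setminus D_i|$ minimum. No member of $\mathcal F$ then has new part properly contained in $C_{i+1}\setminus D_i$, since such a new part would be strictly smaller; so \ref{it:e3} holds. By \ref{it:e2}, $D_{i+1}\supsetneq D_i$. As $E$ is finite, iterating terminates with $D_t=E$. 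This yields the existence of an ear decomposition and shows that any given partial ear decomposition extends to one.

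\emph{Main obstacle.} The only nontrivial ingredient is the transitivity of the common-circuit relation, which rests on circuit elimination. I would cite it as standard rather than reprove it. The rest is the observation that minimizing cardinality of the new part automatically gives inclusion-minimality, so \ref{it:e3} costs nothing.
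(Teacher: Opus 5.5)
The paper does not prove this theorem; it cites Coullard and Hellerstein, so there is no in-paper argument to compare against. Your proof is correct and is the natural self-contained one. The forward direction needs only that the common-circuit relation is an equivalence relation, together with \ref{it:e1}. For the converse, you take a circuit through some $e\notin D_i$ and $f\in D_i$ whose new part has minimum cardinality, which gives \ref{it:e3} for free. This greedy step proves existence and the ``moreover'' part simultaneously.

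One small slip: the sentence ``Then $|E|\ge 2$'' is false for the one-element loop $U_{0,1}$. That matroid is connected and not isomorphic to $U_{1,1}$, and the paper uses exactly this case in \Cref{cor:regular} and \Cref{thm:cographic-eta-mu-bound}. There your ``take distinct $x,y\in E$'' step has nothing to work with. The fix is trivial: the loop $\{e\}$ is itself a circuit, so $C_1=\{e\}$ is already an ear decomposition. Since $D_1=E$, the extension claim is vacuous in this case. (Strictly speaking, the empty matroid also has to be excluded from the statement itself. It is connected under the paper's definition but has no circuits, so it has no ear decomposition.)
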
 

We give some basic properties on ear decompositions of matroids.

\begin{lem}\label{lem:lobe-contained}
Let $\cM$ be a matroid with an ear decomposition $C_1,\dots,C_t$ and let $D_i=\bigcup_{j=1}^i C_j$ and $P_i=C_i\setminus D_{i-1}$ for $i\in [t]$.  
If $C\subseteq D_i$ is a circuit not contained in $D_{i-1}$, then
$P_i\subseteq C$.
\end{lem}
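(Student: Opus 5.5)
The plan is to argue by contradiction, using the minimality condition (E3) applied to the lobe $P_i$. If $i=1$, then $D_0=\varnothing$ and $C\subseteq D_1=C_1$; since $C$ and $C_1$ are both circuits, $C=C_1=P_1$. So assume $i\ge 2$. Since $C\subseteq D_i$ and $C\not\subseteq D_{i-1}$, the set of new elements $C\setminus D_{i-1}$ is nonempty and contained in $D_i\setminus D_{i-1}=P_i$. Call this set $Q$; then $\varnothing\ne Q\subseteq P_i$.

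The main case splits according to whether $C$ meets $D_{i-1}$. Suppose first that $C\cap D_{i-1}\neq\varnothing$. Then $C$ satisfies both \ref{it:e1} and \ref{it:e2} with respect to $D_{i-1}$, and its set of new elements is $Q\subseteq P_i$. By \ref{it:e3}, $Q$ cannot be a proper subset of $P_i$. Hence $Q=P_i$, and so $P_i\subseteq C$.

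The remaining case is $C\cap D_{i-1}=\varnothing$, which means $C\subseteq P_i\subsetneq C_i$. This contradicts the fact that circuits are incomparable, because $C_i$ meets $D_{i-1}$ and therefore $P_i$ is a proper subset of $C_i$. So this case cannot occur. Combining the two cases gives the claim.

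The only step needing care is checking that \ref{it:e3} is applied with exactly the same reference set $D_{i-1}$. Condition \ref{it:e3} compares $C_i\setminus D_{i-1}$ with the new part of any circuit satisfying \ref{it:e1} and \ref{it:e2} relative to $D_{i-1}$, so the first case matches it directly. I do not expect a real obstacle here; the argument is essentially an unwinding of the definition together with circuit incomparability.
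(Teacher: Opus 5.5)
Your proof is correct and follows the paper's argument. The case $i=1$ uses circuit minimality, and for $i\ge 2$ you apply \ref{it:e3} to $C$ relative to $D_{i-1}$. Your extra case $C\cap D_{i-1}=\varnothing$, ruled out because it would give $C\subseteq P_i\subsetneq C_i$, justifies the claim that $C$ satisfies \ref{it:e1}, which the paper asserts without comment.
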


\begin{proof}
If $i=1$, then $C\subseteq C_1$ and circuit minimality gives
$C=C_1=P_1$. Assume $i\ge2$. The circuit $C$ satisfies
\ref{it:e1} and \ref{it:e2} at step $i$. Moreover,
$C\setminus D_{i-1}=C\cap P_i\subseteq P_i$. The minimality assumption
\ref{it:e3} therefore forces $C\cap P_i=P_i$.
\end{proof}

\begin{lem}\label{lem:number-of-ears}
Every ear decomposition of a matroid $\cM$ has
$|E(\cM)|-r(\cM)$ ears, and its lobes satisfy
$$
        r(\cM)=\sum_i(|P_i|-1).
$$
\end{lem}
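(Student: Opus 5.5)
The plan is to argue by induction along the decomposition, tracking the rank of $D_i$. For each $i$ I will show $r(D_i)=r(D_{i-1})+|P_i|-1$ (with $r(D_0)=0$) and $|D_i|-r(D_i)=i$. Since $D_t=E(\cM)$, summing these gives $r(\cM)=\sum_i(|P_i|-1)$ and $t=|E(\cM)|-r(\cM)$. Here $t$ is the number of ears.

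The base case is $i=1$. Here $D_1=C_1=P_1$ is a circuit, so $r(D_1)=|C_1|-1=|P_1|-1$ and $|D_1|-r(D_1)=1$.

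For the inductive step, take $i\ge 2$ and fix $p\in P_i$.

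\emph{Lower bound.} I claim $P_i-p$ is independent in $\cM/D_{i-1}$, i.e.\ $r(D_{i-1}\cup(P_i-p))=r(D_{i-1})+|P_i|-1$.
\begin{itemize}
\item Suppose not. Then there is $q\in P_i-p$ with $q\in\clo(D_{i-1}\cup(P_i-p-q))$.
\item Hence some circuit $C$ satisfies $q\in C\subseteq D_{i-1}\cup(P_i-p)$.
\item This $C$ lies in $D_i$, and it is not contained in $D_{i-1}$ because $q\in C$.
\item By \Cref{lem:lobe-contained}, $P_i\subseteq C$, so $p\in C$, a contradiction.
\end{itemize}
\emph{Upper bound.} Since $C_i-p\subseteq D_i-p$, the element $p$ lies in $\clo(C_i-p)\subseteq\clo(D_i-p)$. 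Therefore
\[
r(D_i)=r(D_{i-1}\cup(P_i-p))=r(D_{i-1})+|P_i|-1 .
\]
The count then follows: $|D_i|-r(D_i)=|D_{i-1}|+|P_i|-r(D_{i-1})-|P_i|+1=(i-1)+1=i$.

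The only real point is the lower bound, which says the new lobe adds exactly $|P_i|-1$ to the rank. This is precisely where minimality condition \ref{it:e3}, via \Cref{lem:lobe-contained}, is needed. Without it, a lobe could contain an extra dependency and raise the nullity by more than one. Everything else is direct bookkeeping, and the argument never needs \ref{it:e1}.
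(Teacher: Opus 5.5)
Your proof is correct and follows essentially the same route as the paper. The paper observes that \ref{it:e3} makes $P_i$ a circuit of $\cM/D_{i-1}$, so $r(D_i)-r(D_{i-1})=|P_i|-1$, and then sums over the ears; you verify this rank increment explicitly, via independence of $P_i-p$ in the contraction by \Cref{lem:lobe-contained} and $p\in\clo(C_i-p)$, which usefully fills in the step the paper asserts without justification.
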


\begin{proof}
Condition \ref{it:e3} says precisely that $P_i$ is a circuit of the
contraction $\cM/D_{i-1}$. Hence
$$
 r(D_i)-r(D_{i-1})
 =r_{\cM/D_{i-1}}(P_i)
 =|P_i|-1.
$$
Summing over all ears yields
$$
 r(\cM)=\sum_i(|P_i|-1)=|E(\cM)|-t,
$$
because the lobes partition $E(\cM)$.
\end{proof}

Let $\varphi(\cM)$ denote the minimum number of even lobes in an ear
decomposition. Since $\eta(\cM)=(r(\cM)+\varphi(\cM))/2$, and each even lobe contributes at least $1$ to $r(\cM)=\sum_i (|P_i|-1)$,
we have $\varphi(\cM)\le r(\cM)$. \Cref{lem:number-of-ears} and parity give
\begin{equation}\label{eq:eta-rank-phi}
        \eta(\cM)=\frac{r(\cM)+\varphi(\cM)}2\le r(\cM).
\end{equation}
Note that every join is independent: if a join contained a circuit $C$, it
would meet $C$ in more than $|C|/2$ elements. Consequently,
\begin{equation}\label{eq:mu-at-most-rank}
        \mu(\cM)\le r(\cM).
\end{equation}

Frank showed the following.

\begin{thm}[Frank~\cite{frankConservative}]\label{thm:frank}
For a connected graphic matroid $\cM$ not isomorphic to $U_{1,1}$, we have $\mu(\cM)=\eta(\cM)$.
\end{thm}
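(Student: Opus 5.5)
The statement is Frank's graph theorem~\cite{frankConservative}, rewritten in matroid language, so my plan is to reduce it to the graph version. I will check that the two sides of the matroid statement coincide with the two sides of the graph statement. Let $\cM$ be connected, not isomorphic to $U_{1,1}$, and graphic.

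\textbf{Choosing the graph.} Since $\cM$ is connected with at least two elements, it has no loops or coloops. We may take $\cM=\cM(G)$ for a graph $G=(V,E)$ that is connected, has no isolated vertices and no loops. Any two edges then lie on a common circuit, so $G$ is $2$-connected and in particular $2$-edge-connected. Frank's hypothesis is therefore met, and $r(\cM)=|V|-1$.

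\textbf{Joins.} The circuits of $\cM$ are exactly the circuits of $G$. Hence joins of $\cM$ and joins of $G$ are the same sets, and $\mu(\cM)=\mu(G)$.

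\textbf{Ear decompositions.} The main step is a parity-preserving bijection between matroid ear decompositions of $\cM$ and graph ear decompositions of $G$, sending lobes to ears. Take a matroid ear decomposition $C_1,\dots,C_t$.
\begin{itemize}
\item By (E1), each $D_i$ is the edge set of a connected subgraph $H_i$.
\item As in the proof of \Cref{lem:number-of-ears}, condition (E3) says that $P_i$ is a circuit of $\cM/D_{i-1}$. Contraction here means identifying $V(H_{i-1})$ into a single vertex.
\item A circuit of this contracted graph is either a cycle avoiding the contracted vertex, or a path in $G$ with both ends in $V(H_{i-1})$ and all internal vertices outside it. The ends may coincide, giving a closed ear.
\item The first case is impossible. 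Then $P_i$ would already be a circuit of $\cM$, so minimality of $C_i$ forces $C_i=P_i$, contradicting (E1).
\end{itemize}
So every lobe is a graph ear, and $C_1=P_1$ is a cycle.

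Conversely, take a graph ear decomposition whose $i$th ear is $P_i$. Choose a path in $H_{i-1}$ joining the ends of $P_i$, or take the empty path if the ends coincide. Adding it to $P_i$ gives a circuit $C_i$ with $C_i\setminus D_{i-1}=P_i$, so (E1) and (E2) hold. For (E3), $P_i$ is a circuit of $\cM/D_{i-1}$, so no proper nonempty subset of $P_i$ is the new part of a circuit meeting $D_{i-1}$. In both directions lobe sizes equal ear lengths.

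\textbf{Conclusion.} Therefore $\varphi(\cM)=\varphi(G)$. Frank's formula then gives
\[
\mu(\cM)=\mu(G)=\frac{|V|-1+\varphi(G)}{2}=\frac{r(\cM)+\varphi(\cM)}{2}=\eta(\cM).
\]

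\textbf{Main obstacle.} I expect the only delicate point to be the ear correspondence. One must check that contraction and (E3) exclude exactly the degenerate lobes, and that closed ears in the graph sense match lobes whose circuit touches $D_{i-1}$ in a single vertex. I would handle this through the description of circuits in the contracted graph given above.
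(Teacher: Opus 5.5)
Your reduction fails at closed ears. In the paper's definition, (E1) requires $C_i$ to share an \emph{element}, i.e.\ an edge, with $D_{i-1}$; touching $D_{i-1}$ at a vertex is not enough. Suppose $P_i$ is a closed ear, that is, a cycle of $G$ meeting $V(H_{i-1})$ in a single vertex. Then $P_i$ is already a circuit of $\cM$, so every circuit $C_i\supseteq P_i$ equals $P_i$ and is disjoint from $D_{i-1}$. Consequently, in your converse the choice ``take the empty path if the ends coincide'' gives $C_i=P_i$, which violates (E1). In your forward direction, the argument you used to exclude cycles avoiding the contracted vertex also excludes the subcase where the two ends coincide. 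So your forward direction is correct but not sharp, and your converse is wrong.

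What you have actually proved is a parity-preserving bijection between matroid ear decompositions of $\cM$ and \emph{open} ear decompositions of $G$, in which every ear after the first has two distinct ends. This gives $\varphi(\cM)=\varphi_{\mathrm{open}}(G)\ge\varphi(G)$, where Frank's $\varphi(G)$ is a minimum over all ear decompositions of the $2$-edge-connected graph $G$, closed ears allowed. Combined with Frank's formula, this yields only $\mu(\cM)\le\eta(\cM)$, which the paper also obtains from \Cref{thm:binary-bound}.

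The missing ingredient is that a $2$-connected graph has an \emph{open} ear decomposition with $\varphi(G)$ even ears, i.e.\ $\varphi_{\mathrm{open}}(G)=\varphi(G)$. Given Frank's theorem and your forward direction, the statement is in fact equivalent to this. It is the general-$\varphi$ analogue of the known fact that $2$-connected factor-critical graphs have odd open ear decompositions, and it needs a genuine argument or a citation.

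A simple special case shows what such an argument looks like. Suppose a closed ear $P$ at $v$ is immediately followed by an open ear $Q$ from an inner vertex $x$ of $P$ to some $u\ne v$ in $V(H_{i-1})$. Split $P$ at $x$ into arcs $A_1,A_2$, and replace $P,Q$ by the open ears $A_1\cup Q,\,A_2$ or by $A_2\cup Q,\,A_1$. A parity check shows that one of these two choices does not increase the number of even ears. In general, however, the ear that reconnects the part hanging at $v$ to $V(H_{i-1})-v$ need not end on $P$ or follow it directly, so a more careful reordering or induction is required.

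A minor point: the one-element loop is connected and not isomorphic to $U_{1,1}$, so ``at least two elements'' is not given. That case is trivial, with $\mu=\eta=0$.
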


\paragraph{Covering radius and maximum frustration.}

For a graph $G$, the \emph{cutset code} of $G$ is the binary linear code $\cB(G)=\spa_{\mathbb F_2}\{\delta(U)\colon U\subseteq V(G)\}\subseteq \mathbb F_2^{E(G)}$, where $\spa_{\mathbb F_2}$ denotes linear span over $\mathbb F_2$, with cuts identified with their incidence vectors in $\mathbb F_2^{E(G)}$.
Equivalently, $\cB(G)$ is the cut space of $G$ over $\mathbb F_2$.

The \emph{covering radius} of a binary code $\cB\subseteq\mathbb F_2^E$ is $\rho(\cB)=\max_{x\in\mathbb F_2^E}\min_{b\in\cB}|x+b|$, where addition is taken coordinatewise over $\mathbb F_2$ and $|\cdot|$ denotes the Hamming weight.

A signing of $G$ is a map $\sigma\colon E(G)\to\{0,1\}$, where edges $e$ with $\sigma(e)=1$ are called \emph{negative} and edges with $\sigma(e)=0$ are called \emph{positive}. We write
$N_\sigma=\{e\in E(G):\sigma(e)=1\}$
for the set of negative edges. A signing $\sigma$ is \emph{balanced} if every circuit of $G$ contains an even number of negative edges.

Following Bowlin~\cite{Bowlin2012}, the \emph{frustration index} $f(G,\sigma)$ is the minimum number of edges whose deletion makes the signing balanced. Equivalently, $f(G,\sigma)$ is the minimum possible number of negative edges obtainable from $\sigma$ by \emph{switching}, swapping negative and positive edges across a cut in the graph. Two signings $\sigma,\sigma'$ are said to be switching-equivalent, $\sigma'\sim \sigma$ if they are obtained by switching across a cut in the graph. We have that,
\[
f(G,\sigma)
=
\min_{\sigma'\sim \sigma}
|N_{\sigma'}|,
\]
The \emph{maximum frustration} of $G$ is
\[
F_{\max}(G)
=
\max_{\sigma:E(G)\to\{0,1\}}
f(G,\sigma).
\]
Sol\'e and Zaslavsky~\cite{SoleZaslavsky1994} showed that
$F_{\max}(G)=\rho(\cB(G))$.

Switching-equivalent is an equivalence relation and the equivalence classes are called switching classes. We call a signing $\sigma$ \emph{reduced} if it has the smallest possible number of negative edges among the switching class containing $\sigma$. Equivalently, $\sigma$ is reduced if switching by any vertex set $U\subseteq V(G)$ does not decrease the number of negative edges.

\section{Limits of the Min-Max Formula for General Matroids}
\label{sec:limits}

Frank's theorem gives a constructive polynomial-time equality for graphic matroids: the join parameter and the ear-decomposition parameter are equal, and the corresponding optima can be found efficiently. For matroids representable over fields of characteristic $2$, the ear-decomposition side remains algorithmically tractable: the work of Szegedy and Szegedy~\cite{Szegedy2006}, together with the algorithmic discussion in Szegedy's thesis~\cite{SzegedyThesis2005}, yields a randomized polynomial-time algorithm for computing $\varphi(\cM)$ from such a representation. 

The join side, however, has a different character. For a binary matroid, $\mu(\cM)$ is the covering radius of its cycle space, and Guruswami, Micciancio, and Regev~\cite{GuruswamiMicciancioRegev2005} showed that the covering-radius problem for linear codes is NP-hard to approximate within any fixed constant factor and $\Pi_2$-hard for some fixed constant factor. In this section we show that related hardness applies for cographic and paving matroids.

\subsection{Failure of Equality in Cographic Matroids}
\label{sec:noeq}

We begin with examples showing that the failure of Frank's equality is not merely a matter of one parameter always dominating the other. The cographic example below gives $\mu(\cM)<\eta(\cM)$.

\begin{prop}
\label{prop:k44-example}
The matroid $\cM=\cM^*(K_{4,4})$ satisfies $\mu(\cM)=4$, $\gamma(\cM)=6$, and $\eta(\cM)=5$. In particular, $\mu(\cM)<\eta(\cM)$.
\end{prop}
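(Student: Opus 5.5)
Throughout, let $G=K_{4,4}$, with $|V(G)|=8$ and $|E(G)|=16$. Then $r(\cM)=|E|-|V|+1=9$, and every ear decomposition has $|E|-r=7$ ears by \Cref{lem:number-of-ears}. Since $\eta=(r+\varphi)/2=(|E|-\gamma)/2$, all three claims follow once I show $\mu(\cM)=4$ and $\varphi(\cM)=1$. These give $\eta=(9+1)/2=5$ and $\gamma=16-10=6$. I treat the two sides separately.

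\emph{The ear side.} The circuits of $\cM$ are the bonds of $G$. Contraction in $\cM^*(G)$ corresponds to deletion in $G$. By the remark in the proof of \Cref{lem:number-of-ears}, each lobe $P_i$ is a circuit of $\cM/D_{i-1}$, that is, a bond of $G-D_{i-1}$.

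The lower bound $\varphi\ge 1$ is a parity argument. If every lobe were odd, then every term $|P_i|-1$ would be even, so $r(\cM)=\sum_i(|P_i|-1)$ would be even. This contradicts $r=9$.

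For the upper bound I will write down an explicit ear decomposition with exactly one even lobe. The seven lobe sizes must sum to $16$ with $\sum(|P_i|-1)=9$; for example, one even lobe and six odd lobes, mostly of size $3$ with a few of size $1$. I would build it greedily. Start with $C_1=\delta(v)$, a vertex star of size $4$; this is the single even lobe. Then repeatedly take $C_i$ to be a bond of $K_{4,4}$, such as $\delta(\{u,w\})$ or a larger vertex set, whose edges outside $D_{i-1}$ form a bond of odd size in the current graph $G-D_{i-1}$. The edges of $C_i$ already in $D_{i-1}$ guarantee \ref{it:e1}, and \ref{it:e3} holds because the new part is a bond of the remaining graph. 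Once the remaining graph becomes a forest-like residue, its edges are bridges, and each gives a lobe of size $1$, which is odd. Checking that the final sequence is valid is routine.

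\emph{The join side.} A set $J$ is a join of $\cM$ iff $|J\cap \delta(U)|\le |\delta(U)|/2$ for every cut $\delta(U)$. For cuts this is immediate, since cuts are disjoint unions of bonds. Equivalently, the signing with negative set $J$ is reduced. Thus $\mu(\cM)=F_{\max}(K_{4,4})$, the maximum over $4\times 4$ $\pm1$ matrices $A$ of the minimum number of $-1$ entries after multiplying rows and columns by $\pm1$.

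For a fixed choice of row signs $x$, the best column switching leaves $(16-\|A^\top x\|_1)/2$ negative entries. For $\mu\ge 4$, take $A$ to be a $4\times 4$ Hadamard matrix. Then $\|A^\top x\|_2^2=4\|x\|_2^2=16$, so by Cauchy--Schwarz $\|A^\top x\|_1\le 2\cdot 4=8$. Hence every switching leaves at least $4$ negative entries, and a reduced form of this signing is a join of size $4$.

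For $\mu\le 4$ I must show that every $4\times4$ sign matrix admits some $x\in\{\pm1\}^4$ with $\sum_j|(A^\top x)_j|\ge 8$. Each column sum lies in $\{0,\pm2,\pm4\}$. A plain average over $x$ gives only $6$, so I will use a case analysis instead. Normalise the first row to all $+1$ and the first column to all $+1$. Then argue by the number of coinciding or opposite rows and the number of $-1$'s in the remaining $3\times 3$ block, in each case exhibiting a row switching that makes two column sums $\pm4$, or all four column sums of absolute value $2$.

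I expect this upper bound $F_{\max}(K_{4,4})\le 4$ to be the main obstacle. If the case analysis becomes unwieldy, I would fall back on citing the known value of the maximum frustration of $K_{4,4}$ from Bowlin~\cite{Bowlin2012}.
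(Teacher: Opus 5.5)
\textbf{Gap: the upper bound $\mu(\cM)\le 4$ is not proved.} The parts you actually carry out are correct. The reduction $\mu(\cM)=F_{\max}(K_{4,4})=\tfrac12\bigl(16-\min_A\max_x\|A^\top x\|_1\bigr)$ is right. So is the Hadamard/Cauchy--Schwarz lower bound; the paper's perfect matching corresponds to the all-ones matrix minus twice the identity, which is itself a Hadamard matrix. The bound $\varphi(\cM)\ge 1$ from the odd rank $r(\cM)=9$ is also right; it replaces the paper's observation that all bonds of $K_{4,4}$ are even, so the first lobe is even. But $\mu(\cM)\le 4$ is the real content of the proposition, and you only announce a case analysis for it. Worse, the dichotomy you want that analysis to establish is false. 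Take the matrix with rows $(1,1,1,1)$, $(1,1,-1,-1)$, $(1,-1,1,-1)$, $(1,1,1,-1)$, which is normalised as you propose. No two of its columns agree up to sign, so no row switching produces two column sums $\pm4$. Its columns contain $0,1,1,3$ entries equal to $-1$, so their parities are mixed, and no row switching makes all four column sums $\pm 2$. The same holds for its rows. Yet $x=(1,1,1,1)$ gives column sums $4,2,2,-2$, with total $10$. The pattern that is actually needed, one sum $\pm4$ together with enough sums $\pm2$, is missing from your plan. Falling back on a citation to Bowlin leaves the key step unproved.

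Within your own framework the step closes quickly by parity. For $x,c\in\{\pm1\}^4$ let $w(\cdot)$ count the $-1$ entries. Then $x^\top c=4-2d(x,c)$ with $d(x,c)\equiv w(x)+w(c)\pmod 2$. So a column whose parity differs from that of $x$ contributes exactly $2$ to $\|A^\top x\|_1$, and a column equal to $x$ contributes $4$. If all columns have the same parity, choose $x$ of the other parity, which gives $8$. Otherwise the parities are mixed. If at least two columns are odd, take $x$ equal to an even column; if exactly one column is odd, take $x$ equal to that column. In both cases the total is at least $8$, so every signing of $K_{4,4}$ switches to at most $4$ negative edges. The paper argues differently: it takes a hypothetical $5$-edge join $J_0$ and uses that the bonds of size $4$ and $6$ are the stars and the sets $\delta(\{u,v\})$ with $u\in A$, $v\in B$. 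In each case it exhibits a bond of size $6$ or $8$ that meets $J_0$ in too many edges. Finally, for $\varphi(\cM)\le 1$ you should write the decomposition down rather than call it routine. With classes $\{a,b,c,d\}$ and $\{1,2,3,4\}$, the sequence $\delta(a),\delta(1),\delta(2),\delta(3),\delta(b),\delta(c),\delta(d)$ has lobes of sizes $4,3,3,3,1,1,1$. It satisfies your sufficient condition that each new part is a bond of $G$ minus the edges already used.
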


\begin{proof}
Let $A$ and $B$ be the two vertex classes of $K_{4,4}$. If a set
$X\subseteq V(K_{4,4})$ contains $x$ vertices from $A$ and $y$ vertices
from $B$, then
\begin{equation*}
        |\delta(X)|=x(4-y)+y(4-x)=4(x+y)-2xy.
\end{equation*}
In particular, every bond of $K_{4,4}$ has even size. The bonds of size
$4$ are the vertex bonds, the bonds of size $6$ are precisely the cuts
$\delta(\{u,v\})$ with $u\in A$ and $v\in B$, and all other bonds have
size at least $8$.

We first show that $\mu(\cM)=4$. Any perfect matching of $K_{4,4}$ is a
join: it meets each vertex bond in one edge, each bond of the form
$\delta(\{u,v\})$ with $u\in A$ and $v\in B$ in either zero or two edges,
and every other bond has size at least $8$.

It remains to show that no join has size at least $5$. Suppose, to the
contrary, that $J$ is a join with $|J|\ge 5$, and choose
$J_0\subseteq J$ with $|J_0|=5$. Since each vertex bond has size $4$,
every vertex is incident with at most two edges of $J_0$. Let $A_2$ and
$B_2$ be the sets of degree-$2$ vertices of the graph
$(V(K_{4,4}),J_0)$ in the two bipartition classes. Both sets are nonempty.
If some $u\in A_2$ and $v\in B_2$ are nonadjacent in $J_0$, then the bond
$\delta(\{u,v\})$ has size $6$ and contains four edges of $J_0$, a
contradiction.

Thus every vertex of $A_2$ is adjacent in $J_0$ to every vertex of $B_2$.
Since all degrees in $J_0$ are at most $2$, we have $|A_2|\le 2$ and
$|B_2|\le 2$. We now distinguish the possible cases.

If $|A_2|=2$ and $|B_2|=2$, then the four edges between $A_2$ and $B_2$
belong to $J_0$. Let $cq$ be the fifth edge of $J_0$, with $q\in B$.
Then $q\notin B_2$, and the bond $\delta(A_2\cup\{q\})$ has size $8$ and
contains all five edges of $J_0$, a contradiction.

If $|A_2|=2$ and $|B_2|=1$, write $A_2=\{a,b\}$ and $B_2=\{p\}$. Besides
the edges $ap$ and $bp$, each of $a$ and $b$ is incident with one further
edge of $J_0$. Let $cq$ be the remaining edge of $J_0$, where
$q\in B\setminus B_2$. Then the bond $\delta(A_2\cup\{q\})$ has size $8$
and contains all five edges of $J_0$, again a contradiction. The case
$|A_2|=1$ and $|B_2|=2$ is symmetric.

Finally, suppose that $A_2=\{a\}$ and $B_2=\{p\}$. Write the two edges
incident with $a$ as $ap$ and $aq$, and write the other edge incident with
$p$ as $bp$. The two remaining edges of $J_0$ have the form $cr$ and $ds$,
where $r,s\in B\setminus\{p,q\}$. Then the bond
$\delta(\{a,b,r,s\})$ has size $8$ and contains all five edges of $J_0$,
a contradiction. Hence every join has size at most $4$, and therefore
$\mu(\cM)=4$.

Since $K_{4,4}$ has $16$ edges and $8$ vertices, the cographic matroid
$\cM$ has $16$ elements and rank $16-(8-1)=9$. Hence every ear
decomposition has seven lobes by \Cref{lem:number-of-ears}. As
observed above, every bond of $K_{4,4}$ has even size. Thus the first
lobe of every ear decomposition of $\cM$ is even, and so
$\gamma(\cM)\le 6$.

Now label the two vertex classes by $\{a,b,c,d\}$ and $\{1,2,3,4\}$.
The vertex bonds
\begin{equation*}
        \delta(a),\ \delta(1),\ \delta(2),\ \delta(3),\
        \delta(b),\ \delta(c),\ \delta(d)
\end{equation*}
form an ear decomposition of $\cM$, with lobes
\begin{equation*}
        \{a1,a2,a3,a4\},\ \{b1,c1,d1\},\ \{b2,c2,d2\},\
        \{b3,c3,d3\},\ \{b4\},\ \{c4\},\ \{d4\}.
\end{equation*}
This ear decomposition has six odd lobes. Therefore $\gamma(\cM)=6$, and
$\eta(\cM)=(16-6)/2=5$.
\end{proof}

\begin{rem}\label{rem:minor}
To discuss minor-closedness, we first extend the parameters naturally to arbitrary matroids. If the connected components of a matroid $\cM$ are $\cM_1,\dots,\cM_k$, define
\[
\mu(\cM)=\sum_{i=1}^k \mu(\cM_i)
\qquad\text{and}\qquad
\eta(\cM)=\sum_{i=1}^k \eta(\cM_i),
\]
with the convention that a $U_{1,1}$ component contributes $1$ to both parameters. This definition of $\mu$ agrees with the join definition, since the circuits of a direct sum are precisely the circuits of its components.

Under this extension, the class of matroids satisfying the join--ear equality is not minor-closed. Indeed, let $G$ be obtained from $K_{4,4}$ by adding one edge between two vertices in the same bipartition class, and let $\cN=\cM^*(G)$. A direct computation gives $\mu(\cN)=\eta(\cN)=5$. However, $\cM^*(K_{4,4})$ is a minor of $\cN$, and \Cref{prop:k44-example} shows that it does not satisfy $\mu=\eta$. The same proposition also implies that $\cM^*(K_{4,4})$ is not graphic. Indeed, every connected graphic matroid satisfies the join--ear equality by Frank's result~\cite{frankConservative}, whereas $\cM^*(K_{4,4})$ does not.
\end{rem}

\subsection{Incomparability}
\label{sec:incomp}

The previous example shows that the graphic join--ear equality may fail with $\mu(\cM)<\eta(\cM)$. The situation is even worse: the reverse inequality can also occur. Thus, for general matroids, the parameters $\mu$ and $\eta$ are incomparable.

\begin{prop}
\label{prop:incomparable}
There exists a connected paving matroid $\cM$ such that $\mu(\cM)=3$ and $\eta(\cM)=2$. In particular, $\eta(\cM)<\mu(\cM)$.
\end{prop}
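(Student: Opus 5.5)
The plan is to take a small sparse paving matroid of rank $4$, built with \Cref{thm:pav}, and choose its circuit-hyperplanes so that a fixed $3$-set $J$ is a join while an odd ear decomposition exists. Rank $3$ cannot work. There $\eta=2$ forces $\varphi=1$, and a join $J$ of size $3$ is a basis. For any $e\notin J$, the set $J+e$ must then contain a triangle that meets $J$ in two elements, which is not allowed. With rank $4$, $\eta=2$ means $\varphi=0$, so we need an odd ear decomposition.

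\textbf{Construction.} Let $E=\{a,b,c,x,y,z,w\}$, $J=\{a,b,c\}$, and let $\mathcal H$ consist of the six $4$-sets
\[
\{x,y,a,b\},\ \{z,w,a,b\},\ \{x,z,a,c\},\ \{y,w,a,c\},\ \{x,w,b,c\},\ \{y,z,b,c\}.
\]
Each pair from $\{x,y,z,w\}$ is thus paired with a pair from $J$, and disjoint pairs get the same $J$-pair. Two distinct members of $\mathcal H$ share at most one element of $\{x,y,z,w\}$. They share at most one element of $J$ unless their $J$-parts coincide, and in that case their $X$-parts are disjoint. So pairwise intersections have size at most $2=r-2$, and \Cref{thm:pav} gives a rank-$4$ sparse paving matroid $\cM$. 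Its circuits are the members of $\mathcal H$ together with the $5$-sets containing no member of $\mathcal H$.

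\textbf{$\mu(\cM)\ge 3$.} Each $4$-circuit meets $J$ in exactly $2\le 2$ elements. A $5$-circuit containing $J$ would have the form $J+u+v$ with $u,v\in\{x,y,z,w\}$. Such a set contains the member of $\mathcal H$ assigned to $\{u,v\}$, so it is not a circuit. Hence every $5$-circuit meets $J$ in at most $2\le 5/2$ elements, and $J$ is a join.

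\textbf{$\mu(\cM)\le 3$.} This holds for any rank-$4$ sparse paving matroid with more than $4$ elements. A join of size $4$ would be a basis $B$, since joins are independent. Take $e\notin B$. If $B+e$ is a circuit, it meets $B$ in $4>5/2$ elements. Otherwise $B+e$ contains a circuit-hyperplane $H\neq B$, and then $|H\cap B|=3>2$. Either way $B$ is not a join.

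\textbf{$\eta(\cM)=2$.} The $5$-set $C_1=\{x,y,z,w,a\}$ contains no member of $\mathcal H$, because each member has two elements of $J$. So $C_1$ is a circuit, and it is a spanning set. For $e\in\{b,c\}$, the set $C_1+e$ is dependent and contains a circuit through $e$ that meets $C_1$. Its new part is the singleton $\{e\}$, so \ref{it:e3} holds automatically. This gives an ear decomposition with lobes of sizes $5,1,1$, all odd. Therefore $\varphi(\cM)=0$ and $\eta(\cM)=(4+0)/2=2$. Connectivity follows from \Cref{thm:ear}.

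The main obstacle is balancing two competing requirements. Every $5$-set containing $J$ must be killed by a circuit-hyperplane meeting $J$ in exactly two elements, yet some $5$-circuit must survive to serve as an odd first lobe. The "perfect matching" pairing of the $X$-pairs with the $J$-pairs above is the step that makes both hold at once.
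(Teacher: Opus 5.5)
Your proof is correct, and your matroid is in fact the paper's example in disguise. Under $a,b,c\mapsto 1,2,7$ and $x,y,z,w\mapsto 3,6,5,4$:
\begin{itemize}
\item your six circuit-hyperplanes become $1236,1245,1357,1467,2347,2567$;
\item your $5$-circuits $\{j\}\cup\{x,y,z,w\}$ become $13456,23456,34567$;
\item your join $J$ becomes $127$, and your first ear $C_1$ becomes $13456$, exactly as in the paper.
\end{itemize}

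What differs is the verification.
\begin{itemize}
\item The paper gives an $\mathbb F_3$-matrix and lists its circuits, which the reader must check by computation. You build the matroid from \Cref{thm:pav}, so paving-ness and the full circuit list follow structurally from your ``perfect matching'' pairing.
\item The paper proves $\mu\le 3$ with a table exhibiting a violated circuit for each of the $34$ three-sets other than $127$. Your basis argument ($B+e$ is either a $5$-circuit or contains a $4$-circuit meeting $B$ in $3$ elements) works in any rank-$4$ paving matroid with more than $4$ elements. It is the same reasoning the paper later uses for $\mu\le\lfloor r/2\rfloor+1$ in \Cref{thm:paving-bound}.
\item The paper gets $\eta=2$ from \Cref{lem:paving-eta-formula}, while you check the odd ear decomposition directly.
\end{itemize}
The paper's route gives slightly more information: the matroid is ternary, and $127$ is the unique $3$-element join. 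Your route explains why the example works and is easier to verify by hand.

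One small inaccuracy appears in your motivational aside on rank $3$. There $J+e$ need not contain a triangle, since it may itself be a $4$-circuit. In that case it meets $J$ in $3>2$ elements, so your conclusion is unaffected.
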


\begin{proof}
Let $\cM$ be the rank-$4$ ternary matroid represented over $\mathbb F_3$ by
\begin{equation*}
A=
\begin{pmatrix}
1&1&1&1&1&0&0\\
0&1&1&0&1&1&1\\
1&1&0&2&2&1&0\\
0&1&1&2&0&1&2
\end{pmatrix},
\end{equation*}
where the columns are labelled $1,\dots,7$. We write, for instance, $1236$ for the set ${1,2,3,6}$. The circuits of $\cM$ are
\begin{equation*}
1236,\ 1245,\ 1357,\ 1467,\ 2347,\ 2567,\ 13456,\ 23456,\ 34567.
\end{equation*}
In particular, $\cM$ is paving. Moreover, the sequence
\begin{equation*}
C_1=13456,\qquad C_2=1245,\qquad C_3=2347
\end{equation*}
is an ear decomposition of $\cM$, with lobes $13456$, $2$, and $7$. Hence $\cM$ is connected. Since $\cM$ has rank $4$ and contains a spanning circuit, \Cref{lem:paving-eta-formula} gives $\eta(\cM)=2$.

Now let $J={1,2,7}$. From the above list, $J$ intersects each $4$-circuit in exactly two elements and each $5$-circuit in exactly one element. Thus $J$ is a join, and so $\mu(\cM)\geq 3>2=\eta(\cM)$. To determine $\mu(\cM)$ exactly, it remains to show that no join has size at least $4$. The following table assigns to every $3$-set different from $127$ a circuit containing it:
\begin{equation*}
\begin{array}{ll}
1236: & 123,126,136,236;\\
1245: & 124,125,145,245;\\
1357: & 135,137,157,357;\\
1467: & 146,147,167,467;\\
2347: & 234,237,247,347;\\
2567: & 256,257,267,567;\\
13456: & 134,156,345,346,356,456;\\
23456: & 235,246;\\
34567: & 367,457.
\end{array}
\end{equation*}
Each circuit in the table has size $4$ or $5$, so each listed $3$-set meets its assigned circuit in more than half of its elements. Hence no $3$-set other than $127$ is a join. But every set of size at least $4$ contains a $3$-subset different from $127$, and therefore no join has size at least $4$. Thus $\mu(\cM)=3$, and consequently $\eta(\cM)=2<3=\mu(\cM)$.
\end{proof}

\subsection{Hardness of Maximum Cographic Matroid Join}
\label{sec:cographicHardness}

We prove that computing the maximum join of a cographic matroid is NP-hard. The proof uses the equivalence between joins of cographic matroids, maximum frustration of signed graphs, and the covering radius of cutset codes. We first establish this equivalence, then reduce from $\textsc{Maximum Stable Set}$ in cubic graphs.

For the reduction, we will use the following equivalent formulation of maximum frustration. Let $\oplus$ denote addition modulo $2$. For a graph $G$, a signing $\sigma\colon E(G)\to\{0,1\}$, and a switching vector $x\in\{0,1\}^{|V(G)|}$, we have
\[
f(G,\sigma)
=
\min_{x\in\{0,1\}^{V(G)}}
\sum_{uv\in E(G)}
\bigl(\sigma(uv)\oplus x_u\oplus x_v\bigr).
\]
The summand
$
\sigma(uv)\oplus x_u\oplus x_v
$
is equal to $1$ exactly when the edge $uv$ is negative after switching by $x$. Consequently,
\[
F_{\max}(G)
=
\max_{\sigma:E(G)\to\{0,1\}} f(G,\sigma).
\]
\subsubsection{Cographic Joins and Maximum Frustration}

Now let $\cM=M^*(G)$ be the cographic matroid of the graph $G$. We explain why the join parameter of $\cM$ is exactly the maximum frustration of $G$.

\begin{lem}
    $\mu(M^*(G))=F_{\max}(G)=\rho(\cB(G)).$
\end{lem}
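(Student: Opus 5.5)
The second equality $F_{\max}(G)=\rho(\cB(G))$ is the theorem of Sol\'e and Zaslavsky~\cite{SoleZaslavsky1994} recalled in the preliminaries, so only $\mu(M^*(G))=F_{\max}(G)$ needs proof. Two standard facts drive the argument. First, the circuits of $M^*(G)$ are exactly the bonds of $G$. Second, switching a signing with negative set $N$ across $U\subseteq V(G)$ produces the negative set $N\triangle\delta(U)$. The plan is to show that a set $J\subseteq E(G)$ is a join of $M^*(G)$ if and only if the signing with negative set $J$ is reduced. Once this holds, both directions follow easily.

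\emph{Join implies reduced.} Let $J$ be a join and $U\subseteq V(G)$. Every cut $\delta(U)$ is a disjoint union of bonds $C_1,\dots,C_k$; this is the standard fact that cuts are cycles of the cographic matroid. Then
\[
|J\triangle\delta(U)|-|J|=|\delta(U)|-2|J\cap\delta(U)|=\sum_{i=1}^k\bigl(|C_i|-2|J\cap C_i|\bigr)\ge 0,
\]
since each $C_i$ is a circuit of $M^*(G)$ and $J$ is a join. So $J$ is a minimum-size negative set in its switching class, and $f(G,\sigma_J)=|J|$. This gives $\mu(M^*(G))\le F_{\max}(G)$.

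\emph{Reduced implies join.} Let $\sigma$ be any signing. Choose $\sigma'\sim\sigma$ with $|N_{\sigma'}|=f(G,\sigma)$, and set $J=N_{\sigma'}$. For any bond $C=\delta(U)$, switching across $U$ yields $J\triangle C$ with $|J\triangle C|\ge |J|$ by minimality. This rearranges to $|J\cap C|\le |C|/2$, so $J$ is a join of size $f(G,\sigma)$. Maximizing over $\sigma$ gives $F_{\max}(G)\le\mu(M^*(G))$.

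The only step needing care is that every cut decomposes as a disjoint union of bonds. This holds because the cut space is the cocycle space of $G$, and cocycles partition into cocircuits, as in \Cref{thm:bin} applied to the binary matroid $M^*(G)$. Degenerate cases such as loops of $G$ (coloops of $M^*(G)$, lying in no bond) cause no trouble. Such an edge is never in any cut, so switching cannot change its sign. In a reduced signing it is therefore positive only if it was positive to begin with; but a maximizing $\sigma$ can make it negative, and it may also lie in a join freely. So it contributes $1$ to both sides, and the argument above already accounts for it, since the bond condition is vacuous for it.
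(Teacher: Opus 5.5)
Your proposal is correct and takes essentially the same route as the paper. Both arguments show that $J$ is a join of $M^*(G)$ exactly when the signing with negative set $J$ is reduced, using that bonds are cuts, that every cut is a disjoint union of bonds, and that $|J\triangle\delta(U)|=|J|+|\delta(U)|-2|J\cap\delta(U)|$; both then maximize and cite Sol\'e--Zaslavsky for $F_{\max}(G)=\rho(\cB(G))$, with yours spelling out the two inequalities and the loop case a bit more explicitly.
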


\begin{proof}
    Recall that the circuits of $M^*(G)$ are the bonds of $G$. Hence a set $J\subseteq E(G)$ is a join of $\cM$ if and only if
$
|J\cap B|\le |B|/2
$
for every bond $B$ of $G$. Equivalently, this inequality holds for every cut $\delta(U)$, since every cut is a disjoint union of bonds.

Given $J\subseteq E(G)$, let $\sigma_J:E(G)\to\{0,1\}$ be the signing whose negative-edge set is $J$; that is, $\sigma_J(e)=1$ if and only if $e\in J$. Switching $\sigma_J$ by a vertex set $U\subseteq V(G)$ changes the negative-edge set from $J$ to $J\triangle\delta(U)$. Therefore
$
|J\triangle\delta(U)|
=
|J|+|\delta(U)|-2|J\cap\delta(U)|.
$
It follows that switching by $U$ does not decrease the number of negative edges if and only if
$
|J\cap\delta(U)|\le |\delta(U)|/2.
$
Thus $J$ is a join of $M^*(G)$ if and only if the signing $\sigma_J$ is reduced.

If $\sigma_J$ is reduced, then it already has the minimum possible number of negative edges in its switching class. Hence
$
f(G,\sigma_J)=|J|.
$
Taking the maximum over all joins $J$ gives
$
\mu(M^*(G))=F_{\max}(G).
$
Together with the covering-radius interpretation from the preliminaries, we obtain
$
\mu(M^*(G))=F_{\max}(G)=\rho(\cB(G)).
$
\end{proof}

\subsubsection{A cubic-graph estimate}

We prove the following elementary lemmas about cubic-graphs, in which every vertex has degree three, to be used in proving the correctness of the NP-hardness reduction. Let $G=(V,E)$ be a cubic graph, and let $\alpha(G)$ denote its independence number.

\begin{lem}\label{lem:switchingSet}
    For every $N\subseteq E$, there exists a set $Y \subset V$ such that $2|Y|+|N\Delta \delta_G(Y)| \leq |E|-\alpha(G)$.
\end{lem}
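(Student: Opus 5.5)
The plan is a local-optimality argument. For $Y\subseteq V$ write $N_Y=N\triangle\delta_G(Y)$ and $f(Y)=2|Y|+|N_Y|$. I would fix a set $Y$ that minimizes $f$. The goal is then to bound $f(Y)$ in terms of the structure of $N_Y$ around each vertex.

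\emph{Step 1: local conditions.} For a vertex $v$, let $k_v=|N_Y\cap\delta_G(v)|$. Toggling $v$ in or out of $Y$ replaces $N_Y$ by $N_Y\triangle\delta_G(v)$. Since $G$ is cubic, this changes $|N_Y|$ by $3-2k_v$, while $2|Y|$ changes by $\pm2$. Minimality of $f$ therefore gives two conditions:
\[
k_v\le 2 \quad\text{for } v\notin Y,
\qquad
k_v\le 1 \quad\text{for } v\in Y.
\]
A second useful fact is that $\delta_G(V\setminus Y)=\delta_G(Y)$, so $f(V\setminus Y)=f(Y)+2(n-2|Y|)$ with $n=|V|$. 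Hence the minimizer satisfies $|Y|\le n/2$.

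\emph{Step 2: two counting bounds.} Let $S$ be a maximum stable set, so $|S|=\alpha(G)$, and let $C=V\setminus S$. By \Cref{thm:gallai}, $C$ is a vertex cover. Put $a=|Y\cap S|$ and $b=|Y\cap C|$.

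The first bound comes from double counting. We have $|N_Y|=\tfrac12\sum_v k_v\le n-\tfrac{a+b}{2}$, which gives
\[
f(Y)\le n+\tfrac32(a+b).
\]
The second bound uses that every edge of $N_Y$ has an endpoint in $C$. This gives $|N_Y|\le\sum_{v\in C}k_v\le 2|C|-b$, and hence
\[
f(Y)\le 2(n-\alpha)+2a+b.
\]
The target is $|E|-\alpha(G)=\tfrac32 n-\alpha(G)$.

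\emph{Step 3: combine, which is the main obstacle.} Neither bound alone suffices. The first is good only when $|Y|$ is small. The second loses roughly $n/2-\alpha+2a+b$.

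The first thing I would try is a sharper refinement of the double count that uses $S$. Because $S$ is stable, the stars $\delta_G(v)$ for $v\in S$ are pairwise disjoint. So toggling any $T\subseteq S$ changes $f$ additively, by $\sum_{v\in T}(\pm2+3-2k_v)$. I would use this to show that at a minimizer each $v\in S\setminus Y$ has $k_v\le1$, unless a pair move involving a $C$-neighbour improves $f$. That would replace the bound $2$ by $1$ on $S$. Combined with the cover count and the identity $2e_C+e_S=\sum_{v\in C}k_v$ (where $e_S$ is the number of $N_Y$-edges meeting $S$ and $e_C$ the number inside $C$), this should yield $|N_Y|\le |E|-\alpha-2|Y|$.

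If this fails, the fallback is different. I would not take a minimizer of $f$. Instead I would build $Y$ explicitly by first switching to reduce $N$ on $C$, treating the independent vertices of $S$ afterwards, and then verify the inequality by a direct charge of one unit to each vertex of $S$.
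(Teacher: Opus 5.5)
Your proposal does not prove the lemma. Step~2 gives two bounds that you yourself note are insufficient, and Step~3 only describes what you would try. The intermediate claim Step~3 relies on is also false: at a minimizer of $f$, a vertex $v\in S\setminus Y$ can have $k_v=2$. Take $G=K_4$ with $S=\{v\}$ and let $N$ consist of two edges at $v$. Then $f(\emptyset)=2$ and $f(Y)\ge 3$ for every nonempty $Y$, so $Y=\emptyset$ is the unique minimizer, yet $k_v=2$. Since no move improves a global minimizer, the ``unless a pair move improves $f$'' clause does not rescue the claim. Incidentally, your Step~1 condition for $v\in Y$ is weaker than it should be. Removing $v$ changes $f$ by $1-2k_v$, so minimality forces $k_v=0$. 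This sharpens the first bound to $f(Y)\le n+|Y|$, but that still does not reach $\tfrac32 n-\alpha(G)$. The fallback, as written, is a direction rather than an argument.

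The missing idea is that no optimization is needed, because an explicit choice works. Let $Y=\{v\in S:\delta_G(v)\subseteq N\}$. Since $S$ is stable, the stars $\delta_G(v)$ for $v\in S$ are pairwise disjoint. Hence $\delta_G(Y)$ consists of exactly $3|Y|$ edges, all in $N$, so $|N\triangle\delta_G(Y)|=|N|-3|Y|$ and $2|Y|+|N\triangle\delta_G(Y)|=|N|-|Y|$. Each $v\in S\setminus Y$ has an edge of $E\setminus N$ in its star, and these edges are distinct. So $|S\setminus Y|\le |E|-|N|$, that is, $|Y|\ge \alpha(G)-|E|+|N|$. Substituting gives $|N|-|Y|\le |E|-\alpha(G)$. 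This is the paper's proof. Your fallback's ``charge one unit to each vertex of $S$'' points toward the second count, but the preliminary switching on $C$ is unnecessary. What your proposal lacks is the observation that choosing $Y\subseteq S$ with full stars in $N$ makes $\delta_G(Y)\subseteq N$ with $|\delta_G(Y)|=3|Y|$ exactly. A minimizer of $f$ does satisfy the bound, but only because this explicit witness does.
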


\begin{proof}
    Let $I\subseteq V$ be a maximum stable set, so $|I|=\alpha(G)$. Define $Y=\{v\in I\colon \delta_G(v)\subseteq N\}$. For every $v\in I\setminus Y$, the star $\delta_G(v)$ contains an edge of $E\setminus N$. These edges are distinct, because $I$ is stable, and so the stars $\delta_G(v)$ for $v\in I$ are pairwise edge-disjoint. Hence $|I\setminus Y|\le |E\setminus N|=|E|-|N|$, and therefore $|Y|\ge \alpha(G)-(|E|-|N|)$. Note that $\delta_G(Y)\subseteq N$ and $|\delta_G(Y)|=3|Y|$. Thus
$2|Y|+|N\triangle \delta_G(Y)|=2|Y|+(|N|-3|Y|)=|N|-|Y|\le |E|-\alpha(G)$.
\end{proof}

\begin{lem}\label{lem:stableSetIsGood}
    For every $Y\subseteq V$, we have $2|Y|+|E\Delta \delta_G(Y)| \geq |E|-\alpha(G)$. Moreover, there exists a set $Y^*$ that attains the lower bound.
\end{lem}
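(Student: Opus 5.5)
Since $\delta_G(Y)\subseteq E$, we have $E\triangle\delta_G(Y)=E\setminus\delta_G(Y)$, so the quantity to bound is
\[
2|Y|+|E|-|\delta_G(Y)|.
\]
The claim is therefore equivalent to $|\delta_G(Y)|-2|Y|\le\alpha(G)$ for every $Y\subseteq V$, with equality attained by some $Y^*$. This is a statement about the cubic graph $G$ and its stable sets only.

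\textbf{Lower bound.} Write $e(Y)$ for the number of edges with both ends in $Y$. Since $G$ is cubic,
\[
|\delta_G(Y)|=3|Y|-2e(Y),
\]
hence
\[
|\delta_G(Y)|-2|Y|=|Y|-2e(Y).
\]
It therefore suffices to show $|Y|-2e(Y)\le\alpha(G)$. Consider the induced subgraph $G[Y]$. It has $|Y|$ vertices and $e(Y)$ edges. Greedily deleting one endpoint of each edge of $G[Y]$ leaves a stable set of $G$ of size at least $|Y|-e(Y)$. Thus
\[
|Y|-2e(Y)\le |Y|-e(Y)\le \alpha(G).
\]
The same conclusion follows from \Cref{thm:gallai} applied to $G[Y]$, since $\tau(G[Y])\le e(Y)$ gives $\alpha(G[Y])=|Y|-\tau(G[Y])\ge |Y|-e(Y)$. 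Rearranging yields
\[
2|Y|+|E\triangle\delta_G(Y)|\ge |E|-\alpha(G).
\]

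\textbf{Attainment.} Take $Y^*=I$, a maximum stable set. Then $e(I)=0$ and $|\delta_G(I)|=3|I|$, so
\[
2|I|+|E|-3|I|=|E|-\alpha(G).
\]
This is also \Cref{lem:switchingSet} with $N=E$: there $Y=\{v\in I:\delta_G(v)\subseteq E\}=I$.

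There is no real obstacle. The only point requiring care is the sign bookkeeping in $|E\triangle\delta_G(Y)|=|E|-|\delta_G(Y)|$ together with the degree count $|\delta_G(Y)|=3|Y|-2e(Y)$. The inequality $|Y|-2e(Y)\le\alpha(G)$ is slack by a factor of two in $e(Y)$, so no delicate argument is needed.
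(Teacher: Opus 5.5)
Your proof is correct and follows the paper's argument: rewrite the left side as $|E|-|Y|+2e(Y)$ using $|\delta_G(Y)|=3|Y|-2e(Y)$, bound $|Y|-2e(Y)\le\alpha(G)$ via a vertex cover of $G[Y]$ (with \Cref{thm:gallai}), and take $Y^*$ to be a maximum stable set. The one difference is that you use a cover of size at most $e(Y)$ where the paper uses $2e(Y)$. This only makes your intermediate bound tighter and has no effect on the conclusion.
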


\begin{proof}
    For any $Y\subseteq V$, $|E\triangle \delta_G(Y)|=|E\setminus \delta_G(Y)|=|E|-|\delta_G(Y)|$. Since $G$ is cubic, $|\delta_G(Y)|=3|Y|-2e_G(Y)$, where $e_G(Y)$ is the number of edges of $G[Y]$. Therefore $2|Y|+|E\triangle \delta_G(Y)|=|E|-|Y|+2e_G(Y)$. Since $G[Y]$ has a vertex cover of size at most $2e_G(Y)$, it has a stable set of size at least $|Y|-2e_G(Y)$ by \Cref{thm:gallai}. Thus $|Y|-2e_G(Y)\le \alpha(G)$ for every $Y\subseteq V$, which implies $2|Y|+|E\triangle \delta_G(Y)|\ge |E|-\alpha(G)$. Therefore equality holds at any maximum stable set $Y^*=I$ of $G$.
\end{proof}

\begin{cor}\label{lem:cubic-indep}
For every $N\subseteq E$,
\begin{equation*}
\min_{Y\subseteq V} \{2|Y|+|N\triangle \delta_G(Y)|\}\le |E|-\alpha(G).
\end{equation*}
Moreover, equality in the above statement holds for $N=E$.
\end{cor}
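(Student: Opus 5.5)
The corollary follows directly from the two preceding lemmas, so the plan is simply to combine them.

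For the inequality, fix an arbitrary $N\subseteq E$. \Cref{lem:switchingSet} supplies a set $Y\subseteq V$ with
\[
2|Y|+|N\triangle\delta_G(Y)|\le |E|-\alpha(G).
\]
The minimum over all $Y\subseteq V$ is at most the value at this particular $Y$, which gives the first claim. No further work is needed beyond noting that the $Y$ constructed there, namely the vertices of a maximum stable set whose whole star lies in $N$, is a legitimate candidate in the minimization.

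For the equality at $N=E$, I would use \Cref{lem:stableSetIsGood}. It states that every $Y\subseteq V$ satisfies
\[
2|Y|+|E\triangle\delta_G(Y)|\ge |E|-\alpha(G),
\]
so the minimum is at least $|E|-\alpha(G)$. Together with the upper bound from the first part, specialized to $N=E$, this yields equality. Alternatively, the lemma's second assertion exhibits $Y^*=I$, a maximum stable set, attaining the bound. In that case $\delta_G(I)$ consists of $3|I|$ distinct edges, so the value is $2|I|+|E|-3|I|=|E|-\alpha(G)$.

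There is no genuine obstacle here. The only point to check is that both lemmas are stated for the same cubic graph $G$ with the same $\alpha(G)$, and that the minimization ranges over all subsets $Y\subseteq V$ (including $Y=\emptyset$), so that the witnesses from the two lemmas are admissible.
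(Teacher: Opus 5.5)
Your proof is correct and is exactly the paper's argument: the inequality comes from the witness $Y$ supplied by \Cref{lem:switchingSet}. Equality at $N=E$ then follows by matching the lower bound of \Cref{lem:stableSetIsGood} with that upper bound (or with the explicit minimizer $Y^*=I$).
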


\begin{proof}
    Follows from the above two lemmas.
\end{proof}

\subsubsection{Gadget properties}

We now discuss the gadget estimates needed for the reduction. The statements are simple and will be used directly in the proof of the hardness theorem; their verification is a somewhat technical case analysis. Readers interested mainly in the structure of the reduction may first read the statements, and then continue with \Cref{sec:stable-set-reduction}.

The gadget we will be using is $K_{3,3}$ with an extra edge. For the rest of this reduction, we will use $H$ to denote the gadget graph and fix the labeling of the vertices and the placement of the extra edge as described in \Cref{fig:gadget}.

\begin{figure}[th!]
\centering
\begin{subfigure}[t]{0.5\textwidth}
    \centering
    \includegraphics[scale=0.6]{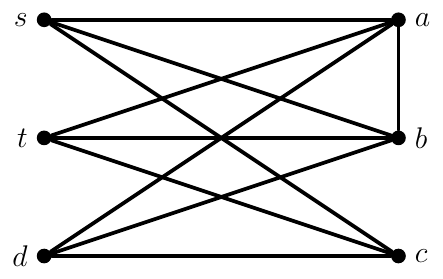}
    \caption{The gadget graph $H$, $K_{3,3}$ with an extra edge.}
    \label{fig:gadget}
\end{subfigure}\hfill
\begin{subfigure}[t]{0.5\textwidth}
    \centering
    \includegraphics[scale=0.6]{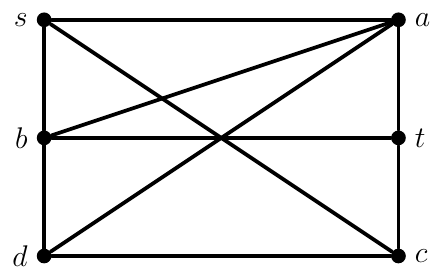}
    \caption{Principal cut swapped along $tb$ does not contain edges $sb,bd,at,tc$, but contains $ab$}
    \label{fig:swap}
\end{subfigure}\hfill
\caption{Illustration of the swapping of principal cut along the edge $tb$ in gadget.}
\label{fig:swapOperation}
\end{figure}
Let $L=\{s,t,d\}$, $R=\{a,b,c\}$ and $V(H)=L\uplus R$.
We call $\delta(L)$ the \emph{principal} cut. 

For an edge $uv\neq ab$ with $u\in L$ and $v\in R$, we say that the principal cut is \emph{swapped along} $uv$ if $u$ and $v$ exchange sides. The resulting cut is $\delta(L\triangle\{u,v\})$. For example, swapping along $tb$ gives the cut $\delta({s,b,d})$, shown in \Cref{fig:swap}.

Note that the operation of swapping removes exactly 4 edges from the principal cut and adds the extra edge $ab$. This operation is very useful in proving combinatorial properties about the gadget. 

For a graph $G$, with a signing $\sigma:E(G)\to \{0,1\}$ and a switching $x\in \{0,1\}^{|V(G)|}$ define
\[
h(G,\sigma,x) \coloneqq
\sum_{uv\in E(G)} \bigl(\sigma(uv)\oplus x_u\oplus x_v\bigr)
\]
as the number of negative edges after switching along the cut corresponding to the set with characteristic vector $x$. We can assume $x_s=0$ without loss of generality as switching by a set is the same as switching along its complement. Conditioning on the value of $x_t$ for vertex $t$ in the gadget $H$ in the switching vector we define
\[
f_\beta(H,\sigma) \coloneqq \min_{\substack{x_s=0, x_t=\beta \\ x_a,x_b,x_c,x_d\in\{0,1\}}} h(H,\sigma,x) 
\]
for $\beta\in\{0,1\}$.

Thus $f_0(H,\sigma)$ is the minimum number of negative edges among switchings that keep $s$ and $t$ on the same side, while $f_1(H,\sigma)$ is the minimum among switchings that keep $s$ and $t$ on opposite sides. 

Let $\delta^-_\sigma(S)$ denote the number of negative edges in the signing $\sigma$ in the cut corresponding to the set $S$. In the following proofs, successive switchings by sets $S_1,S_2,\ldots$ are equivalent to a single switching by $S_1\triangle S_2\triangle\cdots$. We therefore describe switchings sequentially without explicitly tracking the final switching vector.

\begin{lem}\label{lem:minis2}
    For every signing $\sigma$ of the gadget $H$, we have $\min(f_0(H,\sigma),f_1(H,\sigma))\leq 2$.
\end{lem}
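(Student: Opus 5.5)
Since switching by a set and by its complement give the same result, every switching vector can be normalized to have $x_s=0$. Hence $\min(f_0(H,\sigma),f_1(H,\sigma))=f(H,\sigma)$, and the statement is equivalent to $F_{\max}(H)\le 2$. By the lemma $\mu(M^*(H))=F_{\max}(H)$, it therefore suffices to show that every join of $M^*(H)$ has at most $2$ elements. Equivalently, I will show that every reduced signing of $H$ has at most two negative edges. So I take a reduced signing with negative set $N$, assume $|N|\ge 3$, and fix three edges $N_0\subseteq N$. The only tool is the join inequality $|N\cap\delta(U)|\le|\delta(U)|/2$ applied to cleverly chosen cuts $\delta(U)$.

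\textbf{First constraints: vertex stars and the principal cut.} In $H$ the vertices $s,t,d,c$ have degree $3$, while $a,b$ have degree $4$. The vertex cuts therefore give two facts:
\begin{itemize}
\item each of $s,t,d,c$ is incident with at most one edge of $N$;
\item each of $a,b$ is incident with at most two edges of $N$.
\end{itemize}
Every $K_{3,3}$-edge has exactly one endpoint in $L=\{s,t,d\}$. Hence $N$ contains at most three $K_{3,3}$-edges, and if it contains three, they form a matching covering $L$. Thus $N_0$ is one of two kinds:
\begin{itemize}
\item $ab$ together with two $K_{3,3}$-edges at distinct vertices of $L$;
\item three $K_{3,3}$-edges with distinct endpoints in $L$.
\end{itemize}
The second kind splits further: either the right endpoints are also distinct (a perfect matching of $K_{3,3}$), or two of the edges share $a$ or $b$.

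\textbf{Eliminating each configuration with a small cut.} For each configuration I will exhibit a cut containing more than half of its edges in $N$. The natural candidates come from the principal cut $\delta(L)$ of size $9$ and its swaps.
\begin{itemize}
\item \emph{Swapped cuts.} Swapping along $uv$ removes four edges and may add $ab$, giving cuts $\delta(L\triangle\{u,v\})$ of size $5$ or $6$. The only $K_{3,3}$-edges surviving in such a cut are $uv$ and the edges between $L-u$ and $R-v$.
\item \emph{Pair cuts.} For an edge $uv$, the cut $\delta(\{u,v\})$ has size $d(u)+d(v)-2$, which is $4$, $5$ or $6$. It contains all edges of $N$ at $u$ or $v$ other than $uv$.
\end{itemize}

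For a perfect matching $\{u_1v_1,u_2v_2,u_3v_3\}$, I will swap along an edge $u_1v_2$ not in the matching, choosing it so that $ab$ is not added. This gives a cut of size $5$. Its surviving edges are $u_1v_2$ together with the four edges between $\{u_2,u_3\}$ and $\{v_1,v_3\}$. Among these lies the matching edge $u_3v_3$, and I will pick the swap (for example with $v_2=c$) so that the cut also contains a second matching edge while having small size. If that count falls short, I will instead use a set like $\{u_1,v_1,u_2,v_2\}$, whose cut is small and contains $u_3v_3$ and possibly $ab$.

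For the configurations containing $ab$, or having two $N$-edges at $a$ (or at $b$), I will use the pair cut $\delta(\{u,a\})$ or $\delta(\{a,b\})$. For instance, $\delta(\{a,b\})$ has size $6$ and contains every $N$-edge at $a$ or $b$ except $ab$ itself.

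\textbf{Main obstacle.} The main difficulty is this final case check, since the gadget is not symmetric. The extra edge $ab$ breaks the $S_3$-symmetry on $R$, leaving the symmetries that permute $L$ and swap $a\leftrightarrow b$. I will first reduce modulo these symmetries, which leaves only a handful of configurations of three edges. I will then tabulate, for each configuration, one violated cut. I expect the perfect-matching case to be the hardest, since its edges are spread out. There I would first try the cuts $\delta(\{x,y,z\})$ with $x\in L$ and $y,z\in R$, or symmetrically with one vertex of $R$ and two of $L$, computing their sizes directly from the degrees.
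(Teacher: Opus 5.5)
Your reduction to joins is sound, and it is a cleaner route than the paper's. The paper starts from an arbitrary signing and performs explicit sequences of switchings (stars of $s,t,d$, then stars in $R$, then a swapped principal cut) until at most two negative edges remain. You instead assume the signing is reduced, so that $N$ is a join of $M^*(H)$. The star constraints then come for free, and it only remains to exhibit one violated cut per configuration of three edges. Your star constraints and case split are correct. The non-perfect-matching configurations do fall to a pair cut, provided it is chosen as follows. Pick $w\in\{a,b\}$ incident with two edges of $N_0$; such a $w$ exists whenever $N_0$ contains $ab$ or two edges at a common vertex of $R$. Let $u\in L$ be the $L$-endpoint of the third edge. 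Then $uw\notin N_0$, and $\delta(\{u,w\})$ has size $5$ and contains all three edges of $N_0$. The cut $\delta(\{a,b\})$, by contrast, can never be violated: by the star constraints at most $3$ of its $6$ edges lie in $N$.

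The gap is the perfect-matching case, which you rightly flag as the hardest: both cuts you propose there fail.
\begin{itemize}
\item Swapping along a non-matching edge $u_1v_2$ gives $U=\{u_2,u_3,v_2\}$. Since $u_1v_1$ has both ends outside $U$ and $u_2v_2$ has both ends inside, the cut contains exactly one edge of $N_0$, namely $u_3v_3$, no matter how the swap is chosen.
\item The set $\{u_1,v_1,u_2,v_2\}$ is worse: $u_3v_3$ has both ends outside it, so its cut meets $N_0$ in no edge at all.
\end{itemize}
The cut that works is the swap along the \emph{matching} edge at $c$. If $u_3c\in N_0$, let $U=(L-u_3)\cup\{c\}$. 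As $a,b\notin U$, the edge $ab$ does not cross, so $\delta(U)$ consists of $u_3c$ and the four edges between $\{u_1,u_2\}$ and $\{a,b\}$. It has size $5$ and contains all three matching edges, which is a violation. Swapping along the matching edge at $a$ or $b$ instead adds $ab$ and yields only $3$ out of $6$. This is precisely the cut the paper uses in this case. It does belong to your fallback family of sets with two vertices of $L$ and one of $R$. However, your explicit claims for this case are false, so the proof is incomplete until this specific cut is identified.
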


\begin{proof}
    This is equivalent to proving that $f(H,\sigma)\leq 2$ that is there exists a reduced signing with at most $2$ negative edges.
    
    Given any signing $\sigma$ of the gadget graph $H$, one can ensure that at most $1$ negative edge is incident to every vertex in $L$ by switching along the degree cuts of $s,t,d$ independently, if the number of negative edges incident to the corresponding vertex is at least $2$. Let $\gamma$ be such a signing. So we have $\delta^-_\gamma(s)\leq 1,\delta^-_\gamma(t)\leq 1,\delta^-_\gamma(d)\leq 1$.

    \paragraph{Case $\sigma(ab)=0$:} If at least one of $\delta^-_\gamma(s),\delta^-_\gamma(t),\delta^-_\gamma(d)$ is zero we conclude as $\gamma$ is the switching equivalent signing of $\sigma$ with at most two negative edges.

    If all of $\delta^-_\gamma(s),\delta^-_\gamma(t),\delta^-_\gamma(d)$ are equal to $1$, and all of them incident to a single vertex $v \in R$, then we conclude after switching along the corresponding degree cut leading to at most one negative edge (if $v$ is $a$ or $b$).

    If all of $\delta^-_\gamma(s),\delta^-_\gamma(t),\delta^-_\gamma(d)$ are equal to $1$, and all of them are incident to two vertices say $u,v\in R$ with one negative edge incident to $u$ say $xu$ and two negative edges incident to $v$ say $yv,zv$. If $v=c$, then switching along the degree cut of $c$, decreases a negative edge, giving us a signing with at most two negative edges. If $v=a$ or $b$, then switching along the degree cut of $v$ makes the edge $ab$ negative along with $xv$. Since two negative edges are incident to vertex $x\in L$ switching along that reduces the negative edges by $1$ and we have a switching with at most two negative edges.

    If all of them are equal to $1$, and all of them are incident to three vertices forming a matching, say $xa,yb,zc$. Let $S$ be the cut obtained by swapping the principal cut along the matched edge of $c$, $zc$. The cut $S$ contains only $5$ edges out of which there are $3$ negative (matching) edges, thus switching along the cut $S$ gives a signing with at most two negative edges.

    \paragraph{Case $\sigma(ab)=1$:} If at most one of $\delta^-_\gamma(s),\delta^-_\gamma(t),\delta^-_\gamma(d)$ is one, we conclude as $\gamma$ is the switching equivalent signing of $\sigma$ with at most two negative edges.

    If exactly two of them are $1$ and both the negative edges are incident to a vertex $v\in R$, then switching along the degree cut of $v$ reduces the number of negative edges by $1$ if $v=c$ and reduces by $2$ if $v=a$ or $b$.

    If exactly two of them are $1$ and both the negative edges are incident to two different vertices $u,v\in R$ say $xu,yv$ with $x,y\in L$. At least one of $u,v$ has to be $a$ or $b$. Let $v$ be such vertex. Switching along the degree cut of $v$, changes $xv,zv$ to negative, and $yv,ab$ to positive where $\{z\}=L\setminus \{x,y\}$. The negative edges are $xu,xv,zv$. Since $x$ now has two negative edges incident to it, switching along the degree cut of $x$ reduces the number of negative edges by $1$.

    If all of them are equal to $1$, and all of them incident to a single vertex $v \in R$, then switching along the degree cut of $v$ results in at most one negative edge (if $v=c$) and zero (if $v=a$ or $b$).

    If all of them are equal to $1$, and all of them are incident to two vertices of $R$. Write these edges as $xu,yv,zv$, where $u,v\in R$ are distinct and $x,y,z\in L$ are distinct. If $v=a$ or $b$, then switching along the degree cut of $v$ reduces the number of negative edges by two. If $v=c$, then switching along the degree cut of $v$ makes the edge $xv$ negative reducing the negative edges by $1$. Since two negative edges $xu,xv$ are incident to vertex $x\in L$, switching along the degree cut of $x$ reduces the negative edges by $1$ and we have a switching with at most two negative edges.

    If all of them are equal to $1$, and all of them are incident to three vertices forming a matching. Let $S$ be the cut obtained by swapping the principal cut along the matched edge of $a$. The cut $S$ contains only $6$ edges (as $4$ edges are removed but the edge $ab$ is added) out of which there are $4$ negative (matching and $ab$) edges, thus switching along the cut $S$ gives a signing with at most two negative edges.
\end{proof}

We next show that the both parts of the switching class, divided based on the positioning of the terminal $t$ with respect to $s$ have small frustration.

\begin{lem}\label{lem:maxis4}
    For every signing $\sigma$ of the gadget $H$, we have $\max(f_0(H,\sigma),f_1(H,\sigma))\leq 4$.
\end{lem}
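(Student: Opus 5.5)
The idea is to fix $\beta\in\{0,1\}$ and show directly that $f_\beta(H,\sigma)\le 4$ by choosing the remaining switching coordinates cleverly. Applying this to both $\beta=0$ and $\beta=1$ gives the statement. Unlike \Cref{lem:minis2}, no case analysis on the signing is needed. The point is that once $x_s,x_t$ are fixed, we may also fix $x_d$ arbitrarily, say $x_d=0$. Then every vertex of $L$ has a fixed side, and the remaining free vertices $a,b,c$ interact with each other only through the single edge $ab$.

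\textbf{Step 1: the vertex $c$.} Fix $x_s=0$, $x_t=\beta$, $x_d=0$. The vertex $c$ is adjacent exactly to $s,t,d$, whose sides are now fixed. Its three incident edges are therefore controlled by $x_c$ alone, and the two choices of $x_c$ make complementary subsets of $\delta_H(c)$ negative. Choosing the better value of $x_c$ leaves at most $1$ negative edge in $\delta_H(c)$, independently of how $x_a,x_b$ are chosen.

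\textbf{Step 2: the pair $a,b$.} The remaining edges are the six edges between $\{a,b\}$ and $L$, together with $ab$. Consider the four choices of $(x_a,x_b)\in\{0,1\}^2$. Each edge $ua$ or $ub$ with $u\in L$ has exactly one free endpoint, so it is negative in exactly two of the four choices. The edge $ab$ is negative for $\sigma(ab)\oplus x_a\oplus x_b=1$, which also holds in exactly two of the four choices. Hence the total number of negative edges among these seven, summed over the four choices, is $14$, for an average of $3.5$. Some choice therefore leaves at most $3$ negative edges among them.

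\textbf{Step 3: combine.} The ten edges of $H$ are partitioned into $\delta_H(c)$ and the seven edges of Step 2, and the two choices are independent. The resulting switching $x$ has $x_s=0$, $x_t=\beta$ and $h(H,\sigma,x)\le 1+3=4$, so $f_\beta(H,\sigma)\le 4$ for each $\beta$. The step needing the most care is the parity count for $ab$ in Step 2: one must check that it too is negative in exactly half of the four choices. Without that fact the naive averaging over all free vertices only yields the bound $5$.
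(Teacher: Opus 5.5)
Your proof is correct, and it takes a genuinely different and considerably shorter route than the paper's.

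The paper does not bound $f_0$ and $f_1$ symmetrically. It first invokes \Cref{lem:minis2} to get a switching-equivalent signing $\gamma$ with at most two negative edges on one side, $x_t=\beta$. It then reaches the other side, $x_t=1-\beta$, by switching at $t$, which adds at most three negative edges. A case analysis follows on $\gamma(ab)$, on $\delta^-_\gamma(t)$, and on where the negative edges of $\gamma$ are incident. In each case the excess is removed by further degree-cut or swapped-principal-cut switchings.

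Your argument needs neither \Cref{lem:minis2} nor any case split. Fixing $x_d$ decouples $c$ from $\{a,b\}$. Optimizing $x_c$ alone then leaves at most one negative edge in $\delta_H(c)$. Averaging over $(x_a,x_b)$ on the remaining seven edges leaves at most three. The essential check there, which you correctly identify, is that $ab$ is negative for exactly two of the four choices, since $x_a\oplus x_b$ is balanced.

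Your argument also gives a stronger fact: for every prescribed assignment of all three vertices of $L$, not only of $s$ and $t$, some switching of $R$ leaves at most four negative edges.

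The paper's route says more about how the near-optimal switchings in the two halves relate to the reduced signing from \Cref{lem:minis2}. However, only the numerical bound is used in \Cref{lem:upperBound}, so your version suffices there.
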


\begin{proof}
By \Cref{lem:minis2}, we know that there exists a reduced signing $\gamma$ obtained from $\sigma$ with at most two negative edges after switching along $x$ with $x_s=0,x_t=\beta$. It remains to find a switching with $x_s=0$, $x_t=1-\beta$ and at most four negative edges. We may describe this as switching $t$ relative to $\gamma$, ensuring $s$ is not switched.

    If $\delta^-_\gamma(t)\geq 1$ then switching along the degree cut of $t$ increases the number of negative edges by at most $2$, proving that there exists a signing with at most $4$ negative edges in the other half of the switching class partitioned according to the terminal $t$.

    Thus $\delta^-_\gamma(t)=0$ and if the signing $\gamma$ contains at most one negative edge, then after switching along the degree cut of $t$, increases the number of negative edges by $3$ and we still have only $4$ negative edges.
    
    So, we have to prove the lemma when the signing $\gamma$ contains exactly two negative edges and $\delta^-_\gamma(t)=0$.

    \paragraph{Case $\gamma(ab)=0$:} This implies $\delta^-_\gamma(d)=\delta^-_\gamma(s)=1$ because if either one of them is two then the signing $\gamma$ can be reduced further by switching along the degree cut making the number of negative edges in the reduced signing $\gamma$ to be $1$.
    
    If both the negative edges of the signing $\gamma$ are incident to a single vertex $v\in R$, then switching along the degree cut of $t$ increases the number of negative edges by $3$. Note that there are $3$ negative edges incident to $v$, thus switching along the degree cut of $v$ decreases the number of negative edges by at least $2$ and we have at most $3$ negative edges left.
    
    If both the negative edges are incident to two different vertices $u,v\in R$, say $xu,yv$. If $c$ is one of $u,v$, then switching along the degree cut of $t$ not only increases the number of negative edges by $3$ but also induces two negative edges incident to $c$. So switching along the degree cut of $c$ reduces the number of negative edges by $1$. 

    Suppose now that the two negative edges are incident with two distinct vertices of $R$, and that these vertices are $a$ and $b$. Thus the two negative edges are $xa$ and $yb$, where $x,y\in \{s,d\}$. Since $\delta^-_\gamma(s)=\delta^-_\gamma(d)=1$, these two edges form a matching between $\{s,d\}$ and $\{a,b\}$.
    Let $w\in\{a,b\}$ be the vertex matched to $d$ and let $\bar{w}$ denote the other vertex in $\{a,b\}$. Thus the two negative edges of the signing $\gamma$ are $dw$ and $s\bar{w}$. After switching at $t$, the negative edges are $dw,s\bar{w},ta,tb,tc$. Now consider the cut $S=\delta(\{s,t,w\})$, swapped principal cut along $dw$. The cut has size $6$, with edges $\{s\bar{w},sc,t\bar{w},tc,dw,ab\}$. Among these six edges, the four edges $\{s\bar{w},t\bar{w},tc,dw\}$ are negative. Hence switching along the set $\{c,\bar{w},d\}$ decreases the number of negative edges by $2$ resulting in a total of $3$ negative edges. Note that $t$ is switched only once and $s$ is not switched during this process, thus falling in the other half of the switching class.

    \paragraph{Case $\gamma(ab)=1$:} We have that one of the negative edges is $ab$, $\delta^-_\gamma(t)=0$. Let the other negative edge be $xv$ is incident to $v\in R$. After switching along the degree cut of $t$ increases the number of negative edges by $3$. If $v=a$ or $b$ then there are three negative edges incident to $v$, ($xv,tv,ab$), else $v=c$ and there are two negative edges incident to $v$, ($xv,tv$). In either case, switching along the degree cut of $v$ reduces the number of negative edges by at least $1$ and we finally have at most $4$ negative edges.
\end{proof}

The frustration is at most $4$ in both halves of the switching class categorized with respect to the terminal $t$.

\begin{lem}\label{lem:goodsigning}
    There exists a signing $\sigma^*$ of the gadget $H$ that attains both $\min(f_0(H,\sigma^*),f_1(H,\sigma^*))= 2$ and $\max(f_0(H,\sigma^*),f_1(H,\sigma^*))=4$ simultaneously.
\end{lem}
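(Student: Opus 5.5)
The plan is to exhibit an explicit signing and then verify it by a finite check. By \Cref{lem:minis2} and \Cref{lem:maxis4}, every signing already satisfies $\min(f_0,f_1)\le 2$ and $\max(f_0,f_1)\le 4$. So it suffices to find one signing $\sigma^*$ with $f_0(H,\sigma^*)\ge 2$ and $f_1(H,\sigma^*)\ge 4$; by the symmetry $x\mapsto \bar x$ the roles of $0$ and $1$ could equally be swapped.

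To choose $\sigma^*$, I would use the proof of \Cref{lem:maxis4} as a guide. The only subcase in which that argument ends with exactly four negative edges is the case $\gamma(ab)=1$ with the second negative edge incident to $c$. This suggests the candidate whose negative-edge set is
\[
N^*=\{ab,\ sc\}.
\]
In the subcase $\gamma(ab)=0$ the arguments end at $3$ negative edges. Indeed, a quick test of the analogous candidate $\{sa,db\}$ gives $|N\triangle\delta(\{t,a,b\})|=3$, so that family must be avoided. If $\{ab,sc\}$ fails the check below, I would next try $\{ab,dc\}$, which is the other instance of the same subcase.

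\textbf{Step 1: $f_0(H,\sigma^*)=2$.} We need $|N^*\triangle\delta(X)|\ge 2$ for every $X$ with $s,t\notin X$. Since $|N^*|=2$, this says exactly that $|N^*\cap\delta(X)|\le|\delta(X)|/2$ for all such cuts. Only cuts meeting both $ab$ and $sc$ can matter, which means $X$ separates $a$ from $b$ and separates $s$ from $c$. With $s,t\notin X$, this forces $c\in X$ and exactly one of $a,b$ in $X$, with $d$ free. That leaves only four sets, e.g. $\{a,c\}$, $\{a,c,d\}$ and their $b$-analogues. Their cuts have size at least $4$, so they contain at most half negative edges. Hence $f_0(H,\sigma^*)=2$.

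\textbf{Step 2: $f_1(H,\sigma^*)\ge 4$.} This is the main obstacle. We must show $|N^*\triangle\delta(X)|\ge 4$ for all $16$ sets $X\ni t$ with $s\notin X$, that is, $X=\{t\}\cup Z$ with $Z\subseteq\{a,b,c,d\}$. I would organize the check by $|Z|$ and use the automorphism swapping $a\leftrightarrow b$, which fixes $N^*$, to roughly halve the cases. For each $X$ I would compute
\[
|N^*\triangle\delta(X)|=|\delta(X)|+2-2|N^*\cap\delta(X)|.
\]
This reduces to knowing $|\delta(X)|$ and whether $ab$ and $sc$ cross the cut. For example, $X=\{t\}$ gives $3+2=5$, and $X=\{t,c\}$ gives $4+2-2=4$. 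The tight cases to watch are those where $\delta(X)$ is small and contains both $ab$ and $sc$. A cut through both requires $c\in X$ and exactly one of $a,b$ in $X$, and then the cut size is at least $6$, giving value $\ge 4$. All remaining cuts contain at most one edge of $N^*$ and have size $\ge 3$, giving value $\ge 3$. The delicate subcases are therefore the size-$3$ cuts containing one element of $N^*$. I would check these directly: with $t\in X$ and $s\notin X$, the only size-$3$ cut is $\delta(\{t\})$, where the value is $5$. Size-$4$ cuts give value $\ge 4$.

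Combining both steps with \Cref{lem:minis2} and \Cref{lem:maxis4} yields $\min=2$ and $\max=4$ simultaneously.
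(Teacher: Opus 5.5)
There is a genuine gap: your witness $N^*=\{ab,sc\}$ does not satisfy the statement, because $f_1(H,\sigma^*)=3$. Your reduction to proving $f_0\ge 2$ and $f_1\ge 4$ via \Cref{lem:minis2} and \Cref{lem:maxis4} is fine, and Step 1 is correct.

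Step 2 fails in two places.
\begin{itemize}
\item Switching by a set is the same as switching by its complement, so $X=\{t,a,b,c,d\}$ is admissible ($x_s=0$, $x_t=1$). Here $\delta(X)=\delta(s)=\{sa,sb,sc\}$ has size $3$, and the negative set becomes $\{ab,sa,sb\}$, of value $3+2-2=3$. Your claim that $\delta(\{t\})$ is the only size-$3$ cut in this family overlooks exactly this set.
\item For $X=\{t,a,c,d\}$ you have $\delta(X)=\delta(\{s,b\})=\{sa,sc,tb,db,ab\}$. This cut has size $5$, not at least $6$ as you assert, and it contains both negative edges. The result is $\{sa,tb,db\}$, again only $3$ negative edges.
\end{itemize}
In fact $f_1=3$ exactly. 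The $s$--$t$ path through $c$ already carries the negative edge $sc$, so once $s$ and $t$ are separated it is no longer forced to contain a negative edge. Only the paths $sat$, $sbt$ and the triangle $abd$ give parity lower bounds. Your heuristic for choosing $N^*$ was also unsound: the case analysis in the proof of \Cref{lem:maxis4} only produces upper bounds, so a branch ending at four negative edges says nothing about tightness.

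Your fallback $\{ab,dc\}$ is precisely the signing the paper uses, and it does work; a correct enumeration of the sixteen sets confirms $f_1=4$. The paper certifies it without enumerating cuts, using parity invariants.
\begin{itemize}
\item For $f_0\ge 2$: the edge-disjoint cycles $sabs$ and $tbdct$ each contain exactly one negative edge. Switching preserves the parity of negative edges on every cycle, so each keeps at least one.
\item For $f_1\ge 4$: when $s$ and $t$ are on opposite sides, each of the three edge-disjoint, all-positive paths $sat$, $sbt$, $sct$ must acquire a negative edge. The triangle $abd$, edge-disjoint from these paths, keeps its odd parity, giving a fourth.
\item The matching upper bounds come from $x=0$, and from switching at $t$ and then at $c$.
\end{itemize}
As written, however, your proposal certifies the wrong signing, and its verification rests on the two false cut-size claims above.
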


\begin{proof}
    Consider the signing with $\sigma^*(ab)=\sigma^*(cd)=1$ and all other edges are zero. Since the signing itself has at most two negative edges with no switching, we have $f_0(H,\sigma^*)\leq 2$.
    
    Since $sabs$ is a $3$-cycle and $tbdct$ is a $4$-cycle with exactly one negative edge per cycle and any cut intersecting these cycle(s) intersects in at least two edges, switching along any intersecting cut retains at least one negative edge so $f_0(H,\sigma^*)\geq 2$ making it equal.

    Now consider $f_1(H,\sigma^*)$, that is we should consider all switching(s) that have $t$ on the opposite side of $s$. First, switching across the degree cut $t$ increases the number of negative edges by $3$ and there are exactly two negative edges incident to $c$, ($tc,dc$). Thus switching along the degree cut of $c$, reduces the number of negative edges to $4$, $f_1(H,\sigma^*)\leq 4$.
    
    For every vertex $v\in R$, any switching vector $x$ with $x_s=0$ and $x_t=1$, $x_v$ is either $0/1$. Thus at least one of the edge $tv$ or $sv$ is across the cut corresponding to $x$, making it negative after switching along $x$. So we have $f_1(H,\sigma^*)\geq 3$. We also have another disjoint circuit $abda$ with one negative edge. Any cut with non-empty intersection with the cycle intersects in exactly $2$ edges thus retaining at least one negative edge after switching. So $f_1(H,\sigma^*)\geq 4$ making it equal.
\end{proof}

\subsubsection{Reduction from Cubic Maximum Stable Set}\label{sec:stable-set-reduction}

Let $G=(V,E)$ be a cubic graph, let $n\coloneqq |V|$, let $m\coloneqq |E|$, and let $\alpha(G)$ be its independence number.

\paragraph{Reduction:}First we construct a gadget graph $H_{v_i}$ for every $v_i\in V$ with two distinguished terminals $s_{i}$ and $t_{i}$ labeling other vertices as $a_i,b_i,c_i,d_i$, see Figure~\ref{fig:31a}.

\begin{figure}[th!]
\centering
\begin{subfigure}[t]{0.11\textwidth}
    \centering
    \includegraphics[width=0.9\textwidth]{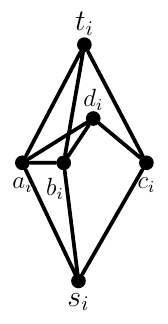}
    \caption{Instance of the gadget graph $H_{v_i}$ for $v_i\in V$.}
    \label{fig:31a}
\end{subfigure}\hfill
\begin{subfigure}[t]{0.28\textwidth}
    \centering
    \includegraphics[width=\textwidth]{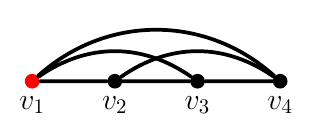}
    \caption{An instance of \textsc{Maximum Stable Set} on a connected cubic graph. The red vertex forms a maximum stable set.}
    \label{fig:31b}
\end{subfigure}\hfill
\begin{subfigure}[t]{0.55\textwidth}
    \centering
    \includegraphics[width=0.8\textwidth]{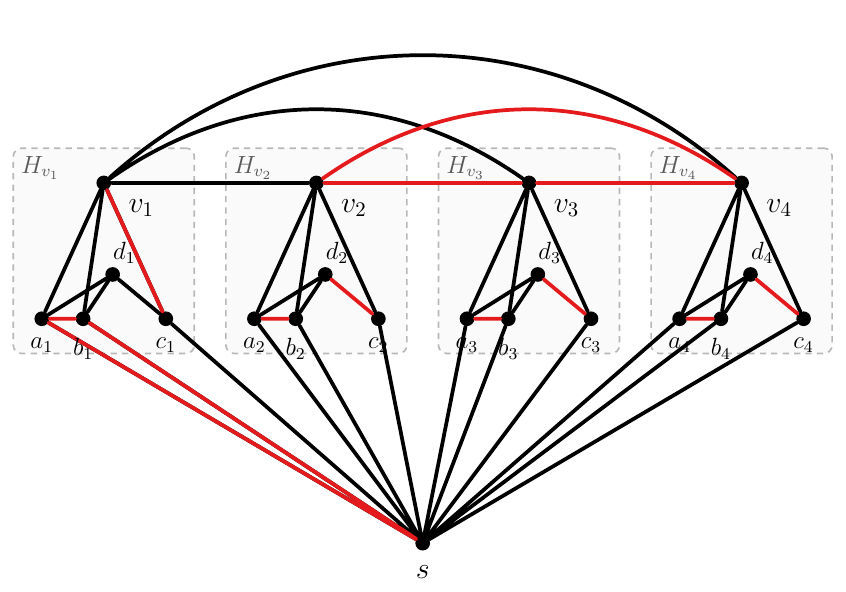}
    \caption{The corresponding instance of \textsc{Maximum Frustration Index} of a graph. The light dashed frames mark the non-shared part of the four gadget copies; all four copies share the terminal $s$.}
    \label{fig:31c}
\end{subfigure}\hfill
\caption{Illustration for the reduction and proof of \Cref{lem:upperBound}.}
\label{fig:31}
\end{figure}

Construct $G'$ from a copy of $G$ by adding a new root vertex $s$. For each $v_i\in V(G)$, attach a copy $H_{v_i}$ of the gadget $H$ by identifying its terminal $s_i$ with $s$ and its terminal $t_i$ with the copied $v_i$ of $G'$, see \Cref{fig:31b,fig:31c} for an example.

We have:
\[V'=V\uplus \{s\} \uplus \bigcup_{i=1}^n\{a_i,b_i,c_i,d_i\}\]
and \[E' = E\uplus \bigcup_{i=1}^n E(H_{v_i}).\]
Note that the construction is polynomial in $|V|+|E|$.

\paragraph{Correctness:}Now we are ready to describe the correctness of reduction by proving the following relation between the maximum frustration $F_{\max}(G')$ of the graph $G'$, and the independence number $\alpha(G)$ of graph $G$,
\[F_{\max}(G') = 2n+m-\alpha(G).\]

Recall that 
\[ F_{\max}(G') = \max_{\sigma\colon E(G')\to\{0,1\}} \min_{x\in\{0,1\}^{V(G')}} h(G',\sigma,x).\]

We prove upper and lower bounds separately.

\begin{lem}\label{lem:upperBound}
    $F_{\max}(G') \leq 2n+m-\alpha(G)$.\\
    Equivalently, for every signing $\sigma$ of the graph $G'$, there exists a switching $x$ such that \[h(G',\sigma,x_\sigma)\leq 2n+m-\alpha(G).\]
\end{lem}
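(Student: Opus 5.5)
Fix a signing $\sigma$ of $G'$. By symmetry we may restrict to switchings with $x_s=0$. The plan is to split the cost $h(G',\sigma,x)$ into a gadget part and a $G$-part, bound the gadget part with \Cref{lem:minis2,lem:maxis4}, and bound the $G$-part with \Cref{lem:cubic-indep}.

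\textbf{Decomposing the cost.} The edge set $E'$ is the disjoint union of $E$ and the gadget edge sets $E(H_{v_i})$. The only vertices shared between different pieces are $s$ and the copies $v_i$, which are the terminals $t_i$. Write $\sigma_i$ for the restriction of $\sigma$ to $H_{v_i}$ and $N=\{e\in E:\sigma(e)=1\}$. Fix the values $y_i\coloneqq x_{v_i}$ and let $Y=\{v_i: y_i=1\}$. The internal vertices $a_i,b_i,c_i,d_i$ of different gadgets can then be chosen independently. By the definition of $f_\beta$, the best such choice gives
\[
\min_{x:\,x_s=0,\ x_{v_i}=y_i} h(G',\sigma,x)\;=\;\sum_{i=1}^n f_{y_i}(H,\sigma_i)\;+\;|N\triangle\delta_G(Y)|,
\]
because an edge $v_iv_j\in E$ is negative after switching exactly when $\sigma(v_iv_j)\oplus y_i\oplus y_j=1$. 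It therefore suffices to exhibit one $Y\subseteq V$ for which the right-hand side is at most $2n+m-\alpha(G)$.

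\textbf{Bounding each gadget.} For each $i$, \Cref{lem:minis2} gives some $\beta_i\in\{0,1\}$ with $f_{\beta_i}(H,\sigma_i)\le 2$. \Cref{lem:maxis4} then gives $f_{1-\beta_i}(H,\sigma_i)\le 4$. Let $Z=\{v_i:\beta_i=1\}$, and look for $Y$ of the form $Y=Z\triangle Y'$. Gadget $i$ is on its cheap side exactly when $v_i\notin Y'$, so its cost is at most $2+2\cdot[v_i\in Y']$.

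\textbf{Bounding the $G$-part.} We have $\delta_G(Z\triangle Y')=\delta_G(Z)\triangle\delta_G(Y')$, so the $G$-edge cost is $|N'\triangle\delta_G(Y')|$ with $N'\coloneqq N\triangle\delta_G(Z)$. Combining the two parts,
\[
h\le 2n+\min_{Y'\subseteq V}\bigl\{2|Y'|+|N'\triangle\delta_G(Y')|\bigr\}\le 2n+m-\alpha(G).
\]
The last inequality is \Cref{lem:cubic-indep} applied to the cubic graph $G$ and the set $N'\subseteq E$.

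\textbf{Main obstacle.} The delicate step is the exact decomposition of the cost. One must check that conditioning on $x_{v_i}$ makes the gadgets independent, and that each gadget's optimum is exactly $f_{x_{v_i}}(H,\sigma_i)$ under the normalization $x_s=0$, since $s$ is shared by all gadgets. The reparametrization $Y=Z\triangle Y'$, which converts the "$\le 2$ or $\le 4$" gadget bounds into the $2|Y'|$ penalty of \Cref{lem:cubic-indep}, is the key idea, and it is straightforward once the decomposition is in place.
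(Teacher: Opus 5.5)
Your proof is correct and follows the paper's argument: your $Z$ and $N'=N\triangle\delta_G(Z)$ are the paper's set $A$ and its switched signing $\gamma$. Your $Y'$ is the set $Y_\gamma$ from \Cref{lem:switchingSet}, and you bound each gadget by $2+2[v_i\in Y']$ via \Cref{lem:minis2,lem:maxis4} exactly as the paper does, while your exact cost decomposition over fixed terminal values states explicitly the independence of the gadgets that the paper uses implicitly.
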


\begin{proof}
Given a signing $\sigma$ of the graph $G'$, let $\sigma_i$ be the signing restricted to edges of the gadget $H_{v_i}$ part of $G'$ for $v_i\in V(G)$. For each gadget $H_{v_i}$ of $v_i\in V(G)$, let $\beta_i\in \{0,1\}$ such that $f_{\beta_i}(H_{v_i},\sigma_i)\leq 2$, guaranteed to exist according to \Cref{lem:minis2}.

Let the switching of $v_i$ (identified with the terminal $t_i$ of $H_{v_i}$) part of $G'$ be $\beta_i$. There exists a switching along the internal vertices $a_i,b_i,c_i,d_i$ such that the value $f_{\beta_i}(H_{v_i},\sigma_i)$ is attained. Choosing such a switching for the internal vertices, bounds the number of negative edges for the gadget $H_{v_i}$ to be at most $2$. Repeating the process for each gadget leads to at most $2n$ negative edges that are part of the gadget. Let $A\subset V(G)$ be the set of vertices $v_i$ with $\beta_i=1$.

Let $\sigma_G$ be the signing obtained from $\sigma$ by restricting to the edges of $G'$ copied from $G$. Let $\gamma$ be the signing of $G$, obtained from $\sigma_G$ after switching along the cut $A$ in $G$.

From \Cref{lem:switchingSet} construct $Y_{\gamma}\subset V$ and let $\Psi$ be the corresponding characteristic vector. Extend the switching vector $\Psi$ to the graph $G'$ by placing zeros for all other vertices of $G'$. Switching along the vector $\Psi$ changes the number of original (that are copies of $G$) negative edges to $|N_\gamma\Delta\delta_G(Y_\gamma)|$. 

Switching the terminals $t$ in the gadgets corresponding to the vertices in $Y_\gamma$ changes the number of negative edges in the gadget to at most $4$ after minimizing over all the switching(s) over the internal vertices of the corresponding gadget, according to \Cref{lem:maxis4}. So we have at most $2(n-|Y_\gamma|)+4|Y_\gamma|=2n+2|Y_\gamma|$ negative edges among the gadget edges.

According to \Cref{lem:switchingSet} we have $2|Y_{\gamma}|+|N_\gamma\Delta\delta_G(Y_\gamma)| \leq |E|-\alpha(G)$. So there are at most $2n+m-\alpha(G)$ negative edges after switching.
\end{proof}

\begin{lem}\label{lem:lowerBound}
    $F_{\max}(G') \geq 2n+m-\alpha(G)$.\\
    Equivalently, there exists a signing $\sigma^*$ of the graph $G'$, such that for every switching $x$, we have \[h(G',\sigma^*,x)\geq 2n+m-\alpha(G).\]
\end{lem}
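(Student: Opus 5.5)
The natural choice is the signing $\sigma^*$ that puts the extremal gadget signing of \Cref{lem:goodsigning} on every copy $H_{v_i}$ and makes every edge copied from $G$ negative. So $\sigma^*(a_ib_i)=\sigma^*(c_id_i)=1$ for all $i$, all other gadget edges are positive, and $\sigma^*(e)=1$ for every $e\in E$. The lower bound should then come from decomposing the count $h(G',\sigma^*,x)$ into gadget contributions plus the contribution of the $G$-edges, and comparing with \Cref{lem:stableSetIsGood}.

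Fix an arbitrary switching $x\in\{0,1\}^{V(G')}$. Since switching by a set and by its complement give the same signing, we may assume $x_s=0$. Let $Y=\{v_i\in V(G)\colon x_{v_i}=1\}$; this is the set of vertices whose terminal $t_i$ lies on the opposite side from $s$. The edge sets of the gadgets and of the copy of $G$ partition $E'$, so $h(G',\sigma^*,x)$ is the sum of the negative edges in each gadget and the negative $G$-edges.

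For the gadget $H_{v_i}$, the restriction of $x$ has $x_s=0$ and $x_{t_i}=x_{v_i}$. Hence its number of negative edges is at least $f_{x_{v_i}}(H,\sigma^*)$, because the internal vertices $a_i,b_i,c_i,d_i$ appear only in that gadget and the minimum in the definition of $f_\beta$ ranges over exactly these choices. By \Cref{lem:goodsigning}, and more precisely by its proof, $f_0(H,\sigma^*)=2$ and $f_1(H,\sigma^*)=4$. It matters that $f_0$ is the smaller value and $f_1$ the larger, not merely that the minimum is $2$ and the maximum is $4$. The gadget edges therefore contribute at least $2(n-|Y|)+4|Y|=2n+2|Y|$.

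On the copy of $G$, all edges are negative under $\sigma^*$. Switching by $x$ flips exactly the edges of $\delta_G(Y)$, since $s$ and the internal gadget vertices are not adjacent to $G$-edges. The number of negative $G$-edges is thus $|E\triangle\delta_G(Y)|$. Combining the two parts,
\[
h(G',\sigma^*,x)\ \ge\ 2n+2|Y|+|E\triangle\delta_G(Y)|\ \ge\ 2n+m-\alpha(G),
\]
where the last inequality is \Cref{lem:stableSetIsGood}.

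The only delicate step is the gadget bound: the per-gadget minimum must be taken conditioned on the side of $t_i$, and the assignment of $\beta=0$ to value $2$ and $\beta=1$ to value $4$ must be correct. Both facts are established in the proof of \Cref{lem:goodsigning}, so the rest is routine bookkeeping.
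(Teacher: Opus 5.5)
Your proposal is correct and follows the paper's proof essentially step for step: the same signing $\sigma^*$ (all $G$-edges negative, $a_ib_i$ and $c_id_i$ negative in each gadget), the normalization $x_s=0$, the gadget contribution of at least $2n+2|Y|$, the count $|E\triangle\delta_G(Y)|$ for the copied edges, and the final appeal to \Cref{lem:stableSetIsGood}. Your point that the argument needs the specific values $f_0(H,\sigma^*)=2$ and $f_1(H,\sigma^*)=4$ from the proof of \Cref{lem:goodsigning}, and not just its min/max statement, is also what the paper's proof relies on.
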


\begin{proof}
    We construct such a signing $\sigma^*$ as follows. Assign all the original edges as negative. For each gadget assign negative sign to $ab,cd$ edges of the gadget and all others positive. Let this signing restricted to the gadget be $\sigma_i$ for all $i\in [|V(G)|]$.
    
    Fix a switching vector $x\in\{0,1\}^{|V(G')|}$ with $x_s=0$, and let $Y=\{v_i\in V(G')\colon x_{v_i}=1\}$. \Cref{lem:goodsigning} implies that the total number of negative gadget edges after switching along $x$ is at least $2n+2|Y|$. This is because, starting with the signing $ab,cd$ as negative edges, we have $f_1(H_{v_i},\sigma_i)=4$ and $f_0(H_{v_i},\sigma_i)=2$. This implies, for every gadget $H_{v_i}$, with $x_{v_i}=1$ we have at least $4$ negative edges and for every gadget $H_{v_i}$, with $x_{v_i}=0$ has at least $2$ negative edges for any switching on the internal vertices of the gadget. So, in total there are at least $2n+2|Y|$ negative gadget edges.
    
    The number of original negative edges after switching along $x$ is $|E\triangle \delta_G(Y)|$. Therefore total number of negative edges is at least $2n+2|Y|+|E\triangle \delta_G(Y)|$. Note that this bound holds varying over all switching vectors $x\in \{0,1\}^{|V(G')|}$ as the original edges and gadget edges interact only through the terminals $s,v_i \in V(G')$ which are conditioned based on $Y$.
    
    From \Cref{lem:stableSetIsGood} we know that varying $Y$ over all possible subsets, the count of negative edges, $2n+2|Y|+|E\triangle \delta_G(Y)|$ attains minimum at $Y=I$, the maximum stable set of $G$, and the minimum equals to $2n+m-\alpha(G)$. So $h(G',\sigma^*,x)\geq 2n+m-\alpha(G)$ for every switching $x$, in the switching class of $\sigma^*$ defined above.
\end{proof}

\begin{thm}\label{thm:frustration-hard}
It is NP-hard to compute the maximum frustration index of a graph.
\end{thm}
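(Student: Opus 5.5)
The proof is a polynomial-time reduction from \textsc{Maximum Stable Set} in cubic graphs, which is NP-hard. All the ingredients are already in place. Given a cubic graph $G=(V,E)$ with $n=|V|$ and $m=|E|$, we build $G'$ as in \Cref{sec:stable-set-reduction}: add a root $s$, and for every vertex $v_i$ attach a copy $H_{v_i}$ of the gadget, with $s_i$ identified with $s$ and $t_i$ identified with $v_i$. The graph $G'$ has $1+5n$ vertices and $m+10n$ edges, so it can be built in polynomial time. The key identity is
\[
F_{\max}(G')=2n+m-\alpha(G),
\]
which follows directly from \Cref{lem:upperBound} (the upper bound) and \Cref{lem:lowerBound} (the lower bound).

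Given this identity, the reduction is immediate. Suppose a polynomial-time algorithm computes the maximum frustration index of an arbitrary graph. Run it on $G'$ and output $2n+m-F_{\max}(G')$. This equals $\alpha(G)$, so maximum stable set on cubic graphs would be solvable in polynomial time. For the decision version, the question ``is $\alpha(G)\ge k$?'' becomes ``is $F_{\max}(G')\le 2n+m-k$?''. We will cite the classical NP-hardness of \textsc{Maximum Stable Set} restricted to cubic graphs (Garey--Johnson--Stockmeyer, via the degree-$3$ restriction of vertex cover). Note also that the reduction is exact, not merely approximate.

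Finally, by the lemma identifying $\mu(M^*(G))$ with $F_{\max}(G)$, the same reduction shows that computing a maximum join of a cographic matroid is NP-hard. A cographic matroid $M^*(G')$ can be given by the graph $G'$ itself, or by a representation computed from $G'$ in polynomial time, so the encoding causes no difficulty. We may state this consequence alongside the theorem.

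The main obstacle has already been handled by the gadget lemmas. \Cref{lem:minis2,lem:maxis4,lem:goodsigning} give the bounds $2$ and $4$ on the two halves of the switching class. \Cref{lem:switchingSet,lem:stableSetIsGood} show that the cost of the original edges together with the extra cost $2|Y|$ paid by switched gadgets is minimized at a stable set. The only point left to check is in the lower bound: with $x_s=0$ fixed, the gadget contributions and the contribution of $E$ must combine additively. This holds because distinct gadgets share only $s$ and their own terminal $t_i=v_i$, and their edge sets are disjoint from $E$. The count is therefore a sum over disjoint edge sets, each of which depends on $x$ only through $x_{v_i}$ and the internal vertices, so it can be minimized term by term.
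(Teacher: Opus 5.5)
Your proposal is correct and follows the paper's proof. The identity $F_{\max}(G')=2n+m-\alpha(G)$, obtained from the upper- and lower-bound lemmas, turns a polynomial-time algorithm for maximum frustration into one for the independence number of cubic graphs; your extra remarks (the count of $5n+1$ vertices and $m+10n$ edges, and the additivity of gadget and original-edge contributions once $x_s=0$ is fixed) are accurate and match what the paper uses.
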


\begin{proof}
    Combining the upper and lower bounds, we obtain
\begin{equation*}
F_{\max}(G')=2n+m-\alpha(G).
\end{equation*}
Thus a polynomial-time algorithm for computing $F_{\max}(G')$ would yield a polynomial-time algorithm for computing $\alpha(G)$ on cubic graphs, which is NP-hard.
\end{proof}

The following corollary is immediate.

\begin{cor}
    Given a graph $G$, it is NP-hard to compute $\mu(M^*(G))$. 
\end{cor}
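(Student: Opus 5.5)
The corollary follows by composing the reduction of \Cref{thm:frustration-hard} with the identity $\mu(M^*(G))=F_{\max}(G)=\rho(\cB(G))$ proved in the lemma on cographic joins and maximum frustration. The approach is a direct polynomial-time reduction from \textsc{Maximum Stable Set} in cubic graphs, which is NP-hard. No new combinatorics is needed; the work is to check that the input format for cographic matroids causes no difficulty.

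First, given a cubic graph $G$ with $n$ vertices and $m$ edges, I will build the graph $G'$ of \Cref{sec:stable-set-reduction} in polynomial time. This attaches a copy of the gadget $H$ to each vertex, with all gadgets sharing the root $s$. \Cref{lem:upperBound} and \Cref{lem:lowerBound} together give
\[
F_{\max}(G')=2n+m-\alpha(G).
\]
Second, I will apply the lemma to $G'$, which gives $\mu(M^*(G'))=F_{\max}(G')$. It follows that
\[
\alpha(G)=2n+m-\mu(M^*(G')).
\]
Hence a polynomial-time algorithm computing $\mu(M^*(\cdot))$ from the underlying graph would compute $\alpha(G)$ for cubic graphs in polynomial time.

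The only point needing care is the input model. The corollary takes the graph $G$ itself as input, and $G'$ is constructed explicitly, so the reduction is polynomial with no further work. If one instead wanted the matroid given by another standard representation, for instance a binary matrix for the cocycle space, that representation is also computable from $G'$ in polynomial time.

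I also want the statement to apply to connected cographic matroids, which is natural since $\eta$ is defined only there. For this I will check that $G'$ is $2$-edge-connected. Every vertex of $G'$ lies in a gadget $H_{v_i}$, each gadget is $2$-connected as $K_{3,3}$ plus an edge, and all gadgets share $s$. Thus $G'$ is connected and bridgeless. $M^*(G')$ may still split into components, but $\mu$ is additive over components, so hardness is unaffected. I do not expect a real obstacle; the substantive content is already contained in the two bounding lemmas.
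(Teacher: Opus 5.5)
Your proposal is correct and follows the paper's argument: the paper also calls the corollary immediate, combining \Cref{thm:frustration-hard} with the identity $\mu(M^*(G))=F_{\max}(G)$. Your connectivity remark is not needed for the statement as posed. If $G$ is connected, $G'$ is in fact $2$-connected, so $M^*(G')$ is already connected and does not split into components.
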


\subsubsection{Inapproximability}

The equivalence between maximum frustration and joins of cographic matroids transfers the hardness result to $\mu$. Combined with the gap hardness of Berman and Karpinski~\cite{berman2003improved} for stable sets in cubic graphs, the same reduction also gives the following inapproximability result.

\begin{thm}\label{thm:no-approx}
Given a cographic matroid $\cM$ by means of a graph representation $\cM=\cM^*(G)$, there is no polynomial-time algorithm that outputs a join $J$ with
\[
|J|\ge \frac{519}{520}\mu(\cM),
\]
unless $\mathrm{P}=\mathrm{NP}$.
\end{thm}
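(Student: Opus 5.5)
We will prove this with a gap-preserving version of the reduction of \Cref{sec:stable-set-reduction}. The starting point is the hardness of Berman and Karpinski~\cite{berman2003improved}. For cubic graphs on $n$ vertices they give constants $\varepsilon$ and $c$ for which it is NP-hard to tell whether $\alpha(G)\ge (c+\varepsilon)n$ or $\alpha(G)\le (c-\varepsilon)n$. Any stated approximation threshold for Maximum Stable Set in cubic graphs can be rewritten in this form. We feed $G$ into the construction of $G'$. By \Cref{lem:upperBound} and \Cref{lem:lowerBound}, together with the lemma identifying $\mu(M^*(G'))$ with $F_{\max}(G')$, we get
\[
\mu(M^*(G'))=2n+m-\alpha(G)=\tfrac{7}{2}n-\alpha(G),
\]
since $m=3n/2$ for a cubic graph. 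An additive gap of $2\varepsilon n$ in $\alpha$ therefore becomes a multiplicative gap in $\mu$. The two possible values are $(7/2-c+\varepsilon)n$ and $(7/2-c-\varepsilon)n$, so their ratio is $1-2\varepsilon/(7/2-c+\varepsilon)$. The constant $519/520$ should come out of substituting the Berman--Karpinski parameters into this ratio. One checks that the resulting ratio is at least $1-1/520$, with rounding in the right direction.

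Next we convert this into the statement about outputting a join. Suppose an algorithm outputs a join $J$ with $|J|\ge \frac{519}{520}\mu(\cM)$. Every join is a certificate from below, because $|J|\le\mu(\cM)$. So in the case $\mu\le(7/2-c-\varepsilon)n$, every join the algorithm could output has size at most that bound. In the case $\mu\ge (7/2-c+\varepsilon)n$, the algorithm's output satisfies $|J|\ge \frac{519}{520}(7/2-c+\varepsilon)n$, which is strictly larger than $(7/2-c-\varepsilon)n$ by the choice of constants. We can verify in polynomial time that the returned set is a join, since $G'$ is given and we only need its size. Comparing $|J|$ with the threshold then decides the gap problem, which would give $\mathrm{P}=\mathrm{NP}$.

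The main obstacle is bookkeeping the Berman--Karpinski gap exactly. Their result is usually stated for instances of a specific size, for example $n$ a multiple of some number, with explicit bounds on $\alpha$ such as $\alpha\ge(\frac{1}{2}-\delta)$ times the size versus $\alpha\le(\frac{1}{2}-\frac{1}{?}+\delta)$ times the size. These have to be converted carefully into our affine expression $\frac72 n-\alpha$. We also need $\delta$ small enough that the ratio stays at most $519/520$. We would first take their stated numbers for 3-regular graphs (roughly, approximation hardness $140/139$), compute the implied bounds on $\alpha/n$, and choose $\delta$ to absorb the slack. If the arithmetic yields a slightly different constant, the argument is unchanged and only the constant needs adjusting.
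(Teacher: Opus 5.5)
Your route is the paper's: plug a Berman--Karpinski gap instance $G$ into the construction of $G'$, use $\mu(M^*(G'))=2|V(G)|+|E(G)|-\alpha(G)$, and compare $|J|$ with a threshold, using only that $|J|\le\mu(M^*(G'))$ because the algorithm is promised to return a join. The gap is the step you postpone, which is exactly where the constant $519/520$ is decided, and with the input you suggest it fails.

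Since $\alpha(G)\le|V(G)|/2$ for cubic $G$, a gap ``$\alpha\ge an$ versus $\alpha\le bn$'' forces $|V(G)|\ge 2an$. Ignoring $\varepsilon$-terms, the two values $\frac72|V(G)|-an$ and $\frac72|V(G)|-bn$ of $\mu$ therefore have ratio at least $\frac{6a}{7a-b}$. A $\frac{519}{520}$-approximation separates them only if this is below $\frac{519}{520}$, i.e.\ only if $a/b>\frac{519}{513}$. For $a/b=140/139$ the ratio is at least $\frac{840}{841}>\frac{519}{520}$. Concretely, in the usual $284n$-vertex form of that gap the cases are $\mu\le(854+\varepsilon)n$ and $\mu\ge(855-\varepsilon)n$, and $\frac{519}{520}\cdot 855<854$. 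So your argument gives hardness only for factors above $854/855$, and ``only the constant needs adjusting'' concedes that the stated theorem is not proved.

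Two smaller corrections. First, ``the resulting ratio is at least $1-1/520$'' has the inequality backwards; you need it strictly below $\frac{519}{520}$, as you say later. Second, drop the claim that one can check in polynomial time that $J$ is a join. That amounts to deciding whether $\sigma_J$ is reduced, a max-cut-type question, and it is not needed.

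The paper gets $519/520$ by quoting the gap as follows: cubic graphs on $176n$ vertices and $264n$ edges, with $\alpha\ge(1-\varepsilon)98n$ versus $\alpha\le(1+\varepsilon)97n$. Then $\mu(M^*(G'))=616n-\alpha(G)$, and the cases become $\mu\le(518+98\varepsilon)n$ and $\mu\ge(519-97\varepsilon)n$. Since $\frac{519}{520}\cdot 519=518+\frac{1}{520}$, any $\varepsilon<1/101303$ separates them. So the structure is identical to yours; the numbers alone produce the constant.

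Your caution about this bookkeeping is warranted, though. A cubic graph on $176n$ vertices has $\alpha\le 88n<(1-\varepsilon)98n$ for the $\varepsilon$ used, so that yes-case is empty and the gap cannot hold as stated. Moreover, with any vertex count compatible with $\alpha\ge 98n$, the bound above gives ratio at least $\frac{588}{589}>\frac{519}{520}$. Hence the specific constant is not justified by the paper's quotation either. What the common argument does establish is inapproximability within the constant dictated by a correctly stated cubic stable-set gap, for instance any factor above $854/855$ from the $140/139$ gap.
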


\begin{proof}
By the gap hardness of Berman and Karpinski~\cite{berman2003improved}, for every $\varepsilon>0$ it is NP-hard to distinguish, for a cubic graph $G$ on $176n$ vertices, between the cases
\[
\alpha(G)\ge (1-\varepsilon)98n
\qquad\text{and}\qquad
\alpha(G)\le (1+\varepsilon)97n .
\]

Fix $0<\varepsilon<1/101303$. Let $G$ be such a cubic graph, with $|V(G)|=176n$ and $|E(G)|=264n$. Construct $G'$ as in the proof of \Cref{thm:frustration-hard}, and let $\cM \coloneqq M^\ast(G')$. From the reduction,
\[
\mu(\cM)=F_{\max}(G')=2|V(G)|+|E(G)|-\alpha(G)=616n-\alpha(G).
\]
Thus, the case (yes-case) of $\alpha(G)\ge (1-\varepsilon)98n$, is equivalent to
\[
\mu(\cM)\le 616n-(1-\varepsilon)98n=(518+98\varepsilon)n,
\]
whereas in the case (no-case) of $\alpha(G)\le (1+\varepsilon)97n$, is equivalent to
\[
\mu(\cM)\ge 616n-(1+\varepsilon)97n=(519-97\varepsilon)n.
\]

Suppose there is a polynomial-time algorithm that outputs a join $J$ satisfying $|J|\ge \frac{519}{520}\mu(\cM)$. Since $J$ is a join, $|J|\le \mu(\cM)$. Therefore, in the yes-case, $|J|\le (518+98\varepsilon)n$, while in the no-case,
$|J|\ge \frac{519}{520}(519-97\varepsilon)n$.

By the choice of $\varepsilon$,
$\frac{519}{520}(519-97\varepsilon) > 518+98\varepsilon$. Hence the value of $|J|$ separates the two cases in polynomial time, contradicting the gap hardness. Therefore no such approximation algorithm exists unless $\mathrm{P}=\mathrm{NP}$.
\end{proof}

\subsection{Hardness of Sparse Paving Matroid Join}
\label{sec:sparsePavingHardness}

We give a separate reduction showing that the problem remains NP-hard for connected sparse paving matroids, even when the matroid is given explicitly by its list of bases.

\begin{thm}\label{thm:paving-mu-hard}
It is NP-hard to compute $\mu(\cM)$ for connected sparse paving matroids given by their list of bases.
\end{thm}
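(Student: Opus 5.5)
The plan is to reduce from \textsc{Maximum Stable Set}, using sparse paving matroids of corank two. In corank two the number of bases is only quadratic in $|E|$, so listing the bases takes polynomial time. By \Cref{thm:pav}, a sparse paving matroid of rank $r$ on a ground set $E$ with $|E|=r+2$ is given by a family $\cH$ of $r$-subsets of $E$ satisfying $|H\cap H'|\le r-2$. Passing to complements, $\cH$ corresponds to a family of $2$-subsets of $E$. Distinct $2$-sets automatically have complements meeting in at most $r-2$ elements. Hence \emph{every} simple graph $\Gamma$ on vertex set $E$ with $|E|=r+2$ yields a sparse paving matroid $\cM_\Gamma$. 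Its circuit-hyperplanes are the sets $E-\{u,v\}$ for $uv\in E(\Gamma)$. I would then read off the remaining circuits. These are the $(r+1)$-sets containing no circuit-hyperplane, namely the sets $E-w$ with $w$ isolated in $\Gamma$. Taking $\Gamma$ without isolated vertices, the circuits of $\cM_\Gamma$ are exactly the complements of the edges of $\Gamma$.

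With $r=2k$ the join condition becomes purely graph-theoretic. A set $J$ is a join if and only if, for every edge $uv$ of $\Gamma$,
\[
|J|-|J\cap\{u,v\}|\le k .
\]
Consequently every $k$-set is a join, and no set of size at least $k+3$ is a join. A $(k+1)$-set is a join if and only if it is a vertex cover of $\Gamma$. A $(k+2)$-set is a join if and only if it contains both ends of every edge, which is impossible when $\Gamma$ has no isolated vertices and $k+2<|E|$. It follows that
\[
\mu(\cM_\Gamma)=k+1 \iff \tau(\Gamma)\le k+1=\tfrac{|V(\Gamma)|}{2},
\]
and $\mu(\cM_\Gamma)=k$ otherwise. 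By \Cref{thm:gallai}, this last condition is equivalent to $\alpha(\Gamma)\ge |V(\Gamma)|/2$.

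Given an instance $(G,t)$ of the stable set problem, with $G$ cubic, I would pad $G$ so that its order matches the threshold. First, if $t<|V(G)|/2$, add disjoint copies of $K_2$: each copy adds $2$ to the order and exactly $1$ to $\alpha$. This lets me assume $t\ge |V(G)|/2$. Then add disjoint triangles: each adds $3$ to the order and $1$ to $\alpha$, so each triangle lowers $2\alpha-|V|$ by one. Adding exactly $2t-|V(G)|$ triangles produces a graph $\Gamma$ in which $\alpha(\Gamma)\ge|V(\Gamma)|/2$ if and only if $\alpha(G)\ge t$. If $|V(\Gamma)|$ is odd, I would replace one triangle by a gadget that adjusts parity, for example one extra $K_4$ together with a recount of the threshold. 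Checking this arithmetic, and keeping $\Gamma$ free of isolated vertices, is the bookkeeping step. The rank is then $r=|V(\Gamma)|-2$.

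The main obstacle is connectivity: the padding components make $\Gamma$ disconnected, and I must make sure that $\cM_\Gamma$ itself is connected. I would argue this through the circuit structure. Any two elements $x,y$ of $E$ lie in a common circuit $E-\{u,v\}$ as long as some edge of $\Gamma$ avoids both $x$ and $y$. This holds whenever $\Gamma$ has at least two components, each of which has an edge. Hence $\cM_\Gamma$ is connected regardless of whether $\Gamma$ is connected, and it has no loops or coloops. Finally, I would confirm that the list of $\binom{r+2}{2}-|E(\Gamma)|$ bases can be written down in polynomial time. This completes the reduction.
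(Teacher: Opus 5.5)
\textbf{The corank-two construction fails.} You claim that complements of distinct $2$-sets meet in at most $r-2$ elements, but this is false. Suppose two edges $uv$ and $uw$ of $\Gamma$ share the vertex $u$. Then $(E-\{u,v\})\cap(E-\{u,w\})=E-\{u,v,w\}$ has $r-1$ elements, so the hypothesis of \Cref{thm:pav} fails.

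This is a genuine obstruction, not an artifact of that theorem. The set $E-\{u,v,w\}$ is a proper subset of the circuit $E-\{u,v\}$, hence independent of rank $r-1$. Its closure is therefore the unique hyperplane containing it, so it cannot lie in two distinct circuit-hyperplanes. Dually, $M_\Gamma^*$ would have rank $2$, and its $2$-element circuit-hyperplanes would be parallel classes, which are pairwise disjoint.

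So in corank two, $\Gamma$ must be a matching. Then $\tau(\Gamma)=|E(\Gamma)|$, and your criterion $\tau(\Gamma)\le|V(\Gamma)|/2$ is trivial. In fact, a corank-two sparse paving matroid is determined up to isomorphism by its rank and its number of circuit-hyperplanes. Hence $\mu$ is easy on this class, and no reduction of this shape can give hardness.

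\textbf{The missing idea is to pass to corank three.} If $|U|=r+3$, the complements of circuit-hyperplanes are $3$-sets, and the condition $|H\cap H'|\le r-2$ becomes $|F\cap F'|\le 1$. So every linear $3$-uniform hypergraph is admissible, and this class is rich enough to encode \textsc{Vertex Cover}. This is what the paper does:
\begin{itemize}
\item each edge $uv$ of $G$ becomes the hyperedge $\{u,v,z_{uv}\}$, which makes the hypergraph linear without changing $\tau$;
\item isolated vertices and disjoint copies of a fixed $13$-vertex linear $3$-uniform hypergraph with $\tau=7$ are added, so that $n=2k+2$ and $\tau(\mathcal H_0)\le k$ if and only if $\tau(G)\le K$;
\item the rank is $r=n-3=2k-1$.
\end{itemize}
Then every $(k-1)$-set is a join, a $k$-set is a join if and only if it is a vertex cover, and no $(k+1)$-set is a join. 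So $\mu\in\{k-1,k\}$ decides the instance.

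\textbf{Your padding step also needs repair.} Adding $K_2$ leaves $2\alpha-|V|$ unchanged, so it cannot achieve $t\ge|V(G)|/2$. Moreover, $\alpha(G)\le|V(G)|/2$ for cubic $G$, so the nontrivial thresholds need padding components with $2\alpha>|V|$ (for example, paths on three vertices), not triangles.
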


\begin{proof}
We reduce from the decision version of \textsc{Vertex Cover}: given a simple graph $G=(V,E)$ and an integer $K$, decide whether $\tau(G)\le K$. We may assume $0\le K\le |V|$.

For each edge $uv\in E$, introduce a new vertex $z_{uv}$. Let $\mathcal G$ be the $3$-uniform hypergraph with vertex set
$V(\mathcal G)=V\cup \{z_{uv}\colon uv\in E\}$ and hyperedge set
$\mathcal F(\mathcal G)=\bigl\{\{u,v,z_{uv}\}\colon uv\in E\bigr\}$.
Thus each edge $uv$ of $G$ is replaced by a hyperedge $\{u,v,z_{uv}\}$.

\begin{cl}\label{cl:Gprimecover}
$\tau(\mathcal G)=\tau(G)$.
\end{cl}

\begin{claimproof}
Let $C\subseteq V$ be a vertex cover of $G$. Then $C$ is also a vertex cover of $\mathcal G$, because every hyperedge of $\mathcal G$ has the form $\{u,v,z_{uv}\}$ for some $uv\in E$, and $C$ contains at least one of $u$ and $v$. Thus $\tau(\mathcal G)\le \tau(G)$.

Conversely, let $C$ be a vertex cover of $\mathcal G$. We modify $C$ so as to remove all vertices of the form $z_{uv}$. Suppose first that $z_{uv}\in C$ and at least one of $u$ and $v$ belongs to $C$. Then we may delete $z_{uv}$ from $C$, since $z_{uv}$ is contained only in the hyperedge $\{u,v,z_{uv}\}$, and this hyperedge is still covered by $u$ or $v$. Suppose next that $z_{uv}\in C$ and neither $u$ nor $v$ belongs to $C$. Then we may replace $z_{uv}$ by $u$; this preserves the vertex-cover property and does not increase the size.

Repeating this operation, we obtain a vertex cover $C'\subseteq V$ of $\mathcal G$ with $|C'|\le |C|$. Now let $uv\in E$. Since $C'$ covers the hyperedge $\{u,v,z_{uv}\}$ and $C'\subseteq V$, it must contain $u$ or $v$. Hence $C'$ is a vertex cover of $G$. Therefore $\tau(G)\le \tau(\mathcal G)$.
\end{claimproof}

We next construct a fixed linear $3$-uniform hypergraph with vertex cover number $7$.

\begin{cl}\label{cl:P13}
There exists a linear $3$-uniform hypergraph $\cP$ on $13$ vertices such that $\tau(P)=7$.
\end{cl}

\begin{claimproof}
Let $V(P)=\mathbb Z_{13}$, and let the hyperedges of $\cP$ be $\{i,i+1,i+4\}$ and $\{i,i+2,i+7\}$ for $i\in \mathbb Z_{13}$, where all subscripts are taken modulo $13$. Let $A=\{0,1,4\}$ and $B=\{0,2,7\}$. For a subset $X\subseteq \mathbb Z_{13}$, write $\Delta X=\{x-y\colon x,y\in X,\ x\ne y\}$. Then
\[
\Delta A=\{\pm1,\pm3,\pm4\}
\quad\text{and}\quad
\Delta B=\{\pm2,\pm5,\pm7\}=\{\pm2,\pm5,\pm6\}
\]
in $\mathbb Z_{13}$. Thus $\Delta A$ and $\Delta B$ partition the nonzero elements of $\mathbb Z_{13}$. Hence every unordered pair of vertices is contained in exactly one hyperedge of $\cP$, and $\cP$ is linear.

Let $S\subseteq V(\cP)$ be an independent set, and set $s=|S|$. Fix a vertex $t\in V(\cP)\setminus S$. The hyperedges containing $t$ and two vertices of $S$ are pairwise disjoint outside $t$, since $\cP$ is linear. Consequently, the corresponding pairs of vertices of $S$ are pairwise disjoint, and there are at most $\lfloor s/2\rfloor$ such pairs. Every pair of vertices of $S$ is contained in a unique hyperedge of $\cP$, and that third vertex is outside $S$ since $S$ is independent. Therefore $\binom{s}{2}\le (13-s)\big\lfloor s/2\big\rfloor$. If $s\ge 7$, then $\binom{s}{2}> (13-s)\big\lfloor s/2\big\rfloor$, a contradiction. Thus $s\le 6$, and hence $\alpha(\cP)\le 6$. The set $\{0,1,2,6,9,11\}$ contains no hyperedge of $\cP$, so $\alpha(P)\ge 6$. Hence $\alpha(\cP)=6$. By \Cref{thm:gallai}, $\tau(\cP)=13-\alpha(\cP)=7$.
\end{claimproof}

Let $N_0=|V(\mathcal G)|=|V|+|E|$, and set $\delta=2K-N_0+2$. Define
\[
a=1+\max\{0,-\delta\}
\quad\text{and}\quad
b=1+\max\{0,\delta\}.
\]
Then $a,b\ge 1$ and $b-a=\delta$. Let $\cP^1,\ldots,\cP^a$ be disjoint copies of $\cP$, and add $b$ further isolated vertices. Let $\mathcal H_0$ be the disjoint union of $\mathcal G$, the copies $\cP^1,\ldots,\cP^a$, and these $b$ isolated vertices. Set $n=|V(\mathcal H_0)|$ and $k=K+7a$.

\begin{cl}\label{cl:threshold}
The hypergraph $\mathcal H_0$ is linear and $3$-uniform, $n=2k+2$, and $\tau(\mathcal H_0)\le k$ if and only if $\tau(G)\le K$. Moreover, $a,b=O(N_0)$.
\end{cl}

\begin{claimproof}
The hypergraph $\mathcal G$ is linear because $G$ is simple and each new vertex $z_{uv}$ belongs to a unique hyperedge. Since $\cP$ is linear and the copies $\cP^1,\ldots,\cP^a$ are disjoint from each other and from $\mathcal G$, the hypergraph $\mathcal H_0$ is linear. It is also $3$-uniform, since all its hyperedges have size $3$.

By Claim~\ref{cl:Gprimecover} and Claim~\ref{cl:P13}, and since isolated vertices do not affect the vertex cover number,
\[
\tau(\mathcal H_0)=\tau(\mathcal G)+a\tau(\cP)=\tau(G)+7a.
\]
Also $n=N_0+13a+b$. Since $b-a=\delta$, we get
\[
n=N_0+14a+\delta=N_0+14a+(2K-N_0+2)=2K+14a+2=2k+2.
\]
Therefore $\tau(\mathcal H_0)\le k$ if and only if $\tau(G)+7a\le K+7a$, that is, if and only if $\tau(G)\le K$. Finally, since $0\le K\le |V|$, we have $|\delta|\le N_0+2$. Hence $a,b=O(N_0)$.
\end{claimproof}

We now construct the sparse paving matroid. Write $\mathcal H_0=(U,\mathcal F)$, where $U=V(\mathcal H_0)$ and $\mathcal F$ is the hyperedge set of $\mathcal H_0$. Let $r=n-3=2k-1$. For each $F\in \mathcal F$, set $H_F=U\setminus F$, and let
$\mathcal C=\{H_F\colon F\in \mathcal F\}$.
Thus each member of $\mathcal C$ has size $r$. We declare the sets in $\mathcal C$ to be the circuit-hyperplanes, and define
$\mathcal B=\textstyle\binom{U}{r}\setminus \mathcal C$.

\begin{cl}\label{cl:matroid}
The family $\mathcal B$ is the set of bases of a rank-$r$ sparse paving matroid $\cM$ on $U$. Moreover, the members of $\mathcal C$ are exactly the circuit-hyperplanes of $\cM$.
\end{cl}

\begin{claimproof}
Since $\mathcal H_0$ is linear, distinct hyperedges $F,F'\in\mathcal F$ satisfy $|F\cap F'|\le 1$. Hence
\[
|H_F\cap H_{F'}|=|U|-|F\cup F'|\le n-5=r-2.
\]
Also, since $b\ge 1$, the hypergraph $\mathcal H_0$ has an isolated vertex. This isolated vertex belongs to every set $H_F$, and therefore $\mathcal C\neq \binom{U}{r}$. By \Cref{thm:pav}, the family $\binom{U}{r}\setminus \mathcal C$ is the set of bases of a rank-$r$ sparse paving matroid on $U$, and the sets in $\mathcal C$ are precisely its circuit-hyperplanes.
\end{claimproof}

\begin{cl}\label{cl:connected}
The matroid $\cM$ is connected.
\end{cl}

\begin{claimproof}
Fix one copy $\cP_0$ of $\cP$ inside $\mathcal H_0$, and let $x,y\in U$ be distinct. We find a hyperedge $F$ of $\cP_0$ such that $x,y\notin F$. Each vertex of $\cP_0$ is contained in six hyperedges, while $\cP_0$ has $26$ hyperedges. Hence at most twelve hyperedges of $\cP_0$ meet $\{x,y\}$, and therefore some hyperedge $F$ of $\cP_0$ avoids both $x$ and $y$. Then $H_F=U\setminus F$ is a circuit-hyperplane of $\cM$ containing both $x$ and $y$. Therefore any two distinct elements of $\cM$ are contained in a common circuit, and $\cM$ is connected.
\end{claimproof}

The construction has polynomial size. Indeed, Claim~\ref{cl:threshold} gives $a,b=O(N_0)$. Moreover, $r=n-3$, so $\binom{U}{r}=\binom{U}{3}$ has size $O(n^3)$. Thus the family $\mathcal B=\binom{U}{r}\setminus \mathcal C$ can be constructed and listed in polynomial time. Hence the basis-list representation of $\cM$ has polynomial size.

\begin{cl}\label{cl:k-join-cover}
For every set $J\subseteq U$ of size $k$, $J$ is a join of $\cM$ if and only if $J$ is a vertex cover of $\mathcal H_0$.
\end{cl}

\begin{claimproof}
By Claim~\ref{cl:matroid}, the rank-$r$ circuits of $\cM$ are exactly the sets $H_F$ with $F\in\mathcal F$. Since $\cM$ is paving of rank $r=2k-1$, every circuit of $\cM$ has size $r$ or $r+1=2k$.

Suppose first that $J$ is a join of $\cM$ and $|J|=k$. For every $F\in \mathcal F$, the set $H_F$ is a circuit of size $r$, and hence
\[
|J\cap H_F|\le \big\lfloor r/2\big\rfloor=k-1.
\]
Since $|J|=k$, this implies $J\nsubseteq H_F$, or equivalently $J\cap F\neq \emptyset$. Thus $J$ meets every hyperedge of $\mathcal H_0$, so $J$ is a vertex cover of $\mathcal H_0$.

Conversely, suppose that $J$ is a vertex cover of $\mathcal H_0$ and $|J|=k$. Then $J\cap F\neq \emptyset$ for all $F\in\mathcal F$, and hence
\[
|J\cap H_F|\le |J|-1=k-1=\big\lfloor r/2\big\rfloor
\]
for every rank-$r$ circuit $H_F$. If $C$ is a circuit of size $r+1=2k$, then $|J\cap C|\le |J|=k=\lfloor |C|/2\rfloor$. Thus $J$ satisfies the join inequality for every circuit of $\cM$, and hence $J$ is a join.
\end{claimproof}

If $\tau(G)\le K$, then Claim~\ref{cl:threshold} gives $\tau(\mathcal H_0)\le k$. Hence $\mathcal H_0$ has a vertex cover of size exactly $k$, obtained by adding arbitrary vertices if necessary. By Claim~\ref{cl:k-join-cover}, $\cM$ has a join of size $k$, so $\mu(\cM)\ge k$.

We now show that no join has size larger than $k$. Let $J\subseteq U$ with $|J|=k+1$. Since $|U|=2k+2$ and $r=2k-1$, there is a set $X\subseteq U$ of size $r+1=2k$ with $J\subseteq X$. The set $X$ is dependent, so it contains a circuit $C$. Since $\cM$ is paving of rank $r$, the circuit $C$ has size $r$ or $r+1$. If $|C|=r+1$, then $C=X$ and $|J\cap C|=k+1>k=\lfloor |C|/2\rfloor$. If $|C|=r$, then $C$ omits at most one element of $J$, so $|J\cap C|\ge k>k-1=\lfloor |C|/2\rfloor$. In both cases $J$ violates the join condition. Hence no set of size $k+1$ is a join, and therefore no larger set is a join either. Thus $\mu(\cM)=k$.

Now suppose that $\tau(G)>K$. Then Claim~\ref{cl:threshold} gives $\tau(\mathcal H_0)>k$. Hence $\mathcal H_0$ has no vertex cover of size $k$, and by Claim~\ref{cl:k-join-cover}, $\cM$ has no join of size $k$. Therefore $\cM$ has no join of size greater than $k$, since every subset of a join is again a join. On the other hand, every set $J\subseteq U$ with $|J|=k-1$ is a join, because every circuit of $\cM$ has size $2k-1$ or $2k$. Hence $\mu(\cM)=k-1$.

We have thus proved that $\mu(\cM)=k$ if $\tau(G)\le K$, and $\mu(\cM)=k-1$ if $\tau(G)>K$. Thus a polynomial-time algorithm for computing $\mu(\cM)$ would decide \textsc{Vertex Cover}. This proves that computing $\mu(\cM)$ is NP-hard for connected sparse paving matroids given by their list of bases.
\end{proof}

\section{Comparison Bounds for Binary, Paving, and General Matroids}
\label{sec:other}

We have seen that Frank's equality does not extend to matroids in general; in fact, it can already fail for binary matroids. It is therefore natural to ask whether the two parameters can nevertheless be bounded in terms of each other. In this section we prove such comparisons for general matroids, with better guarantees for binary and paving matroids. In fact, we prove that for binary matroids, the failure is one-sided: the inequality $\mu(\cM)\leq \eta(\cM)$ still holds. 

\subsection{Binary Matroids}
\label{sec:binary}

In this section, we prove that $\mu(\cM)\leq \eta(\cM)$ for binary matroids. Let $\cM$ be a connected binary matroid not isomorphic to $U_{1,1}$, and let $C_1,\ldots,C_t$ be an ear decomposition of $\cM$ with lobes $P_1,\ldots,P_t$. Recall that $D_i=\bigcup_{j=1}^i P_j$ and $\cM_i=\cM|D_i$. We also set $D_0=\emptyset$, and let $\cM_0$ be the empty matroid.

\begin{lem}
\label{lem:upperBoundOnJoin}
For every $i\in[t]$, $\mu(\cM_i)\leq \mu(\cM_{i-1})+\lfloor |P_i|/2\rfloor$.
\end{lem}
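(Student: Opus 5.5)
The plan is to use the coset (covering-radius) description of joins that binary matroids admit, and to handle the new lobe $P_i$ by possibly adding the circuit $C_i$. The first step is the reformulation. For a binary matroid $\cN$ and $J\subseteq E(\cN)$, $J$ is a join if and only if
\begin{equation*}
|J\triangle Z|\ge |J| \qquad \text{for every cycle } Z \text{ of } \cN .
\end{equation*}
This holds because $|J\triangle Z|=|J|+|Z|-2|J\cap Z|$, so the displayed inequality is equivalent to $|J\cap Z|\le |Z|/2$. This bound for all cycles $Z$ is equivalent to the bound for all circuits, since a cycle is a disjoint union of circuits and the bounds add up. By \Cref{thm:bin}, the cycles of $\cN$ are closed under symmetric difference. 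Hence every $J\subseteq E(\cN)$ can be replaced by an element of minimum size in $\{J\triangle Z\colon Z \text{ a cycle}\}$, and that element is a join. Consequently,
\begin{equation*}
\mu(\cN)=\max_{X\subseteq E(\cN)}\ \min_{Z \text{ cycle}} |X\triangle Z|.
\end{equation*}

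The second step is the bound itself. Fix a join $J$ of $\cM_i$; it suffices to find a cycle $Z$ of $\cM_i$ with $|J\triangle Z|\le \mu(\cM_{i-1})+\lfloor |P_i|/2\rfloor$, because $|J|\le |J\triangle Z|$. Write $J'=J\cap D_{i-1}$ and $J''=J\cap P_i$, and note that every cycle of $\cM_{i-1}=\cM|D_{i-1}$ is also a cycle of $\cM_i$. Split into two cases according to the size of $J''$.
\begin{itemize}
\item If $|J''|\le \lfloor |P_i|/2\rfloor$, choose a cycle $Z'$ of $\cM_{i-1}$ with $|J'\triangle Z'|\le \mu(\cM_{i-1})$, using the formula for $\mu(\cM_{i-1})$. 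Take $Z=Z'$; then $|J\triangle Z|\le \mu(\cM_{i-1})+|J''|$.
\item Otherwise, use the circuit $C_i=P_i\cup Q$, where $Q=C_i\cap D_{i-1}$. Choose a cycle $Z''$ of $\cM_{i-1}$ with $|(J'\triangle Q)\triangle Z''|\le\mu(\cM_{i-1})$. Set $Z=C_i\triangle Z''$, which is a cycle of $\cM_i$ by \Cref{thm:bin}. On $D_{i-1}$ the set $J\triangle Z$ equals $J'\triangle Q\triangle Z''$, and on $P_i$ it equals $P_i\setminus J''$. The latter has size $|P_i|-|J''|<|P_i|/2$, hence at most $\lfloor |P_i|/2\rfloor$.
\end{itemize}
In both cases $|J|\le|J\triangle Z|\le \mu(\cM_{i-1})+\lfloor |P_i|/2\rfloor$. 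For $i=1$ the same argument applies with $\cM_0$ empty and $Q=\emptyset$, giving $\mu(\cM_1)\le\lfloor |C_1|/2\rfloor$.

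The only delicate point is the second case: flipping $P_i$ by adding $C_i$ also changes the part of $J$ inside $D_{i-1}$, by $Q$. This is why the minimization over cycles of $\cM_{i-1}$ is redone after adding $C_i$, rather than reusing $Z'$ from the first case; the covering-radius formula for $\mu(\cM_{i-1})$ applies to any set $X$, in particular to $J'\triangle Q$. I also need that cycles of the restriction $\cM_{i-1}$ remain cycles of $\cM_i$, which is immediate because circuits of a restriction are circuits of the matroid. The minimality property of lobes in \ref{it:e3} is not needed here.
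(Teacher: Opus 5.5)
Your proof is correct, but it takes a different route from the paper's.

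The paper works with the conservative weighting $w_J$ of a maximum join $J$ of $\cM_i$. When $|J\cap P_i|>\lfloor |P_i|/2\rfloor$, it picks a new circuit $C^\ast$ of minimum $w$-weight. It uses the lobe minimality \ref{it:e3} (every circuit of $\cM_i$ meeting $P_i$ contains all of $P_i$) to get $P_i\subseteq C^\ast$. It then decomposes $C\triangle C^\ast$ into circuits to show that switching $J\cap D_{i-1}$ along $A=C^\ast\setminus P_i$ gives a join of $\cM_{i-1}$. The size bound follows from $w(A)\ge -w(P_i)$.

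You instead first establish the coset (covering-radius) description $\mu(\cN)=\max_X\min_Z|X\triangle Z|$ for binary $\cN$. The minimization that the paper performs through the choice of $C^\ast$ is then absorbed into the choice of a cycle $Z''$ of $\cM_{i-1}$ covering the shifted set $J'\triangle Q$. This lets you use the ear $C_i$ itself.

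What your route buys:
\begin{itemize}
\item It is shorter and avoids the circuit-decomposition claim.
\item As you observe, it never uses \ref{it:e3}. So it proves the inequality for any sequence of circuits with $P_i=C_i\setminus D_{i-1}$.
\item In coding-theoretic terms, it is simply the covering-radius bound for enlarging a code by new coordinates together with a codeword that is all-ones on them.
\end{itemize}

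The paper's version stays in the join/conservative-weighting language of Frank's theorem. It also exhibits an explicit join of $\cM_{i-1}$, obtained from $J$ by a switching, rather than relying on the existence of a minimum-size element of a coset.

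Both proofs use binarity in the same way: cycles are closed under symmetric difference (\Cref{thm:bin}). This is applied to the restrictions $\cM_{i-1}$ and $\cM_i$, which are binary because restrictions of binary matroids are binary; you should state this explicitly.
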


\begin{proof}
Fix $i\in[t]$, and let $J$ be a maximum join of $\cM_i$. Then $J_{i-1}\coloneqq J\cap D_{i-1}$ is a join of $\cM_{i-1}$. If $|J\cap P_i|\leq \lfloor |P_i|/2\rfloor$, then $\mu(\cM_i)=|J|=|J\cap D_{i-1}|+|J\cap P_i|\leq \mu(\cM_{i-1})+\lfloor |P_i|/2\rfloor$, as required. Hence we may assume that $|J\cap P_i|> \lfloor |P_i|/2\rfloor$.

We use the equivalent formulation of joins in terms of conservative weightings. Let $w=w_J$ be the weighting of $D_i$ that assigns weight $-1$ to the elements of $J$ and weight $+1$ to the elements outside $J$. Since $J$ is a join, every circuit of $\cM_i$ has nonnegative $w$-weight.

Choose a circuit $C^\ast\in\cC(\cM_i)\setminus\cC(\cM_{i-1})$ of minimum $w$-weight. By the defining property of the lobe $P_i$, no circuit of $\cM_i$ contains a nonempty proper subset of $P_i$. Hence $P_i\subseteq C^\ast$. Set $A=C^\ast\setminus P_i$, so that $C^\ast=A\cup P_i$. Define $J'=J_{i-1}\triangle A$, and let $w'=w_{J'}$ be the corresponding weighting of $D_{i-1}$.

\begin{cl}
$J'$ is a join of $\cM_{i-1}$.
\end{cl}

\begin{claimproof}
Let $C$ be a circuit of $\cM_{i-1}$. Since $\cM_i$ is binary, the symmetric difference $C\triangle C^\ast$ is a disjoint union of circuits $C_1',\ldots,C_k'$ of $\cM_i$ by \Cref{thm:bin}. The set $C\triangle C^\ast$ contains $P_i$, and no circuit of $\cM_i$ contains a nonempty proper subset of $P_i$. Therefore exactly one of the circuits $C_1',\ldots,C_k'$ contains $P_i$; relabel so that this circuit is $C_1'$.

Since $C^\ast=A\cup P_i$ and $C\subseteq D_{i-1}$, we have $C\triangle C^\ast=(C\setminus A)\cup P_i\cup(A\setminus C)$. Therefore
\[
w(C\triangle C^\ast)-w(C^\ast)=w(C\setminus A)+w(P_i)+w(A\setminus C)-w(A)-w(P_i).
\]
Equivalently,
\[
\sum_{j=1}^k w(C_j')-w(C^\ast)=w(C\setminus A)-w(C\cap A).
\]
By the definition of $w'$, the right-hand side is $w'(C)$, and hence
\[
w'(C)=\sum_{j=1}^k w(C_j')-w(C^\ast).
\]
Now $C_1'$ is not contained in $D_{i-1}$, so the choice of $C^\ast$ gives $w(C_1')\geq w(C^\ast)$. Also, $w(C_j')\geq 0$ for every $j$, because $J$ is a join of $\cM_i$. Thus $w'(C)\geq 0$. Hence every circuit of $\cM_{i-1}$ has nonnegative $w'$-weight, and so $J'$ is a join of $\cM_{i-1}$.
\end{claimproof}

Since $|J\cap P_i|>\lfloor |P_i|/2\rfloor$, we have $w(P_i)<0$. Moreover, $|J\cap P_i|=(|P_i|-w(P_i))/2$. Since $0\leq w(C^\ast)=w(A)+w(P_i)$, we have $w(A)\geq -w(P_i)$. Switching the elements of $A$ changes the size of the join by $w(A)$, so $|J'|=|J_{i-1}|+w(A)$. Thus $\mu(\cM_{i-1})\geq |J'|\geq |J\cap D_{i-1}|-w(P_i)$, or equivalently $|J\cap D_{i-1}|\leq \mu(\cM_{i-1})+w(P_i)$.
Therefore
\begin{align*}
\mu(\cM_i)&=|J|\\
&=|J\cap D_{i-1}|+|J\cap P_i|\\
&\leq \mu(\cM_{i-1})+w(P_i)+\frac{|P_i|-w(P_i)}{2}\\
&=\mu(\cM_{i-1})+\frac{|P_i|+w(P_i)}{2}\\
&\leq \mu(\cM_{i-1})+\left\lfloor\frac{|P_i|}{2}\right\rfloor.
\end{align*}
The last inequality follows because $w(P_i)<0$ and $(|P_i|+w(P_i))/2$ is an integer.
\end{proof}

With the help of \Cref{lem:upperBoundOnJoin}, we now prove that the inequality $\mu(\cM)\leq \eta(\cM)$ holds.

\begin{thm}\label{thm:binary-bound}
Let $\cM$ be a connected binary matroid not isomorphic to $U_{1,1}$. Then $\mu(\cM)\leq \eta(\cM)$.
\end{thm}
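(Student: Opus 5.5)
We fix an ear decomposition $C_1,\dots,C_t$ of $\cM$ that attains $\varphi(\cM)$, meaning it has exactly $\varphi(\cM)$ even lobes. We then telescope \Cref{lem:upperBoundOnJoin} along this decomposition. Write $\cM_0$ for the empty matroid, so $\mu(\cM_0)=0$. Since $D_t=E(\cM)$, we have $\cM_t=\cM$. Summing the inequalities of \Cref{lem:upperBoundOnJoin} for $i=1,\dots,t$ gives
\[
\mu(\cM)=\mu(\cM_t)\le \sum_{i=1}^t \left\lfloor \frac{|P_i|}{2}\right\rfloor .
\]
Note that \Cref{lem:upperBoundOnJoin} is stated for an arbitrary ear decomposition of a connected binary matroid. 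Since $\cM\not\cong U_{1,1}$ is connected, an ear decomposition exists by \Cref{thm:ear}, and we may take the optimal one.

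The next step is to evaluate the right-hand side. For an odd lobe, $\lfloor |P_i|/2\rfloor=(|P_i|-1)/2$. For an even lobe, $\lfloor |P_i|/2\rfloor=(|P_i|-1)/2+1/2$. Hence
\[
\sum_{i=1}^t \left\lfloor \frac{|P_i|}{2}\right\rfloor=\frac{1}{2}\sum_{i=1}^t(|P_i|-1)+\frac{\#\{i: |P_i| \text{ even}\}}{2}.
\]
By \Cref{lem:number-of-ears}, $\sum_i(|P_i|-1)=r(\cM)$. The number of even lobes is $\varphi(\cM)$ by the choice of the decomposition. So the bound becomes $(r(\cM)+\varphi(\cM))/2=\eta(\cM)$, which is the claim.

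Two small points need checking, and neither should be a real obstacle. First, $\cM_1=\cM|C_1$ is a single circuit. The $i=1$ case of the lemma then reads $\mu(\cM|C_1)\le\lfloor |C_1|/2\rfloor$, which holds directly from the join condition applied to $C_1$. Second, the restrictions $\cM_i$ must stay binary, which holds because binary matroids are minor-closed; this is what allows the lemma to be applied at each step. All the substantive work, namely the exchange argument using symmetric differences of circuits, is already contained in \Cref{lem:upperBoundOnJoin}, so the theorem itself reduces to the parity bookkeeping above.
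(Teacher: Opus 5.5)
Your proposal is correct and takes the same approach as the paper: both telescope \Cref{lem:upperBoundOnJoin} along an optimal ear decomposition and then count lobes by parity. The only cosmetic difference is that you count via $\varphi(\cM)$ and $\sum_i(|P_i|-1)=r(\cM)$, whereas the paper counts via $\gamma(\cM)$ and $\sum_i|P_i|=|E(\cM)|$; these are equivalent because every ear decomposition has the same number of lobes.
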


\begin{proof}
Choose an ear decomposition $C_1,\ldots,C_t$ of $\cM$ with the maximum possible number $\gamma(\cM)$ of odd lobes, and let $P_1,\ldots,P_t$ be its lobes. We denote by $\cP_0$ and $\cP_1$ the sets of even and odd lobes, respectively. Applying \Cref{lem:upperBoundOnJoin} repeatedly gives
\begin{align*}
\mu(\cM)
&\leq \sum_{i=1}^t \left\lfloor\frac{|P_i|}{2}\right\rfloor\\
&= \sum_{P_i\in \cP_0} \frac{|P_i|}{2}
   + \sum_{P_i\in \cP_1} \frac{|P_i|-1}{2}\\
&= \frac{\sum_{i=1}^t |P_i|-\gamma(\cM)}{2}\\
&= \frac{|E(\cM)|-\gamma(\cM)}{2}\\
&= \eta(\cM).
\end{align*}
Hence $\mu(\cM)\leq \eta(\cM)$.
\end{proof}

\subsection{Paving Matroids}
\label{sec:paving}

Next, we give bounds on $\mu(\cM)$ and $\eta(\cM)$ for paving matroids. We begin with an explicit formula for $\phi(\cM)$, which also determines $\eta(\cM)$. Recall that a \emph{spanning circuit} is a circuit that spans the ground set; in a rank-$r$ matroid, such a circuit has size $r+1$.

\begin{lem}\label{lem:paving-eta-formula}
Let $\cM$ be a connected paving matroid not isomorphic to $U_{1,1}$, and let $r=r(\cM)$. Then
\begin{equation*}
\phi(\cM)=
\begin{cases}
1, & \text{if } r \text{ is odd},\\
0, & \text{if } r \text{ is even and } \cM \text{ has a spanning circuit},\\
2, & \text{if } r \text{ is even and } \cM \text{ has no spanning circuit}.
\end{cases}
\end{equation*}
Consequently,
\begin{equation*}
\eta(\cM)=
\begin{cases}
(r+1)/2, & \text{if } r \text{ is odd},\\
r/2, & \text{if } r \text{ is even and } \cM \text{ has a spanning circuit},\\
r/2+1, & \text{if } r \text{ is even and } \cM \text{ has no spanning circuit}.
\end{cases}
\end{equation*}
\end{lem}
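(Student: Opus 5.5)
The plan is to use the structure of ear decompositions in paving matroids. Every circuit has size $r$ or $r+1$, and every lobe $P_i$ with $i\ge 2$ is a circuit of $\cM/D_{i-1}$ with $r(D_i)-r(D_{i-1})=|P_i|-1$ (\Cref{lem:number-of-ears}). The first lobe is $C_1$ itself, of size $r$ or $r+1$. If $|C_1|=r+1$, then $C_1$ is spanning, every later lobe is a single element, and these singletons are odd. If $|C_1|=r$, then $C_1$ is a circuit-hyperplane of rank $r-1$. In that case the total rank contribution of the later lobes is exactly $1$, so exactly one later lobe has size $2$ and all others are singletons. So there are only two shapes of ear decomposition:
\begin{itemize}
\item lobe sizes $(r+1,1,\dots,1)$, which needs a spanning circuit;
\item lobe sizes $(r,2,1,\dots,1)$, which needs a circuit-hyperplane first.
\end{itemize}
The count of even lobes is then immediate: in the first shape it is $1$ if $r$ is odd and $0$ if $r$ is even; in the second it is $1$ if $r$ is odd and $2$ if $r$ is even.

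Before reading off $\varphi$, I will justify that each shape is realizable when expected. The first shape exists whenever $\cM$ has a spanning circuit: start from it and extend by \Cref{thm:ear}. The remaining lobes must be singletons, since the rank is exhausted, so each later circuit adds one new element. If $\cM$ has no spanning circuit, then $C_1$ must have size $r$, because $\cM$ is connected and not $U_{1,1}$, so \Cref{thm:ear} gives an ear decomposition. Such a decomposition is then automatically of the second shape. I also need that an ear decomposition exists at all, which again is \Cref{thm:ear}. One degenerate case needs care: when $r=1$ a circuit of size $r$ is a loop, which cannot occur in a connected matroid other than $U_{1,1}$. Hence for $r=1$ every circuit is a parallel pair, which is spanning; this is consistent with the odd case.

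Now read off $\varphi$. If $r$ is odd, both shapes give exactly one even lobe, and some decomposition exists, so $\varphi=1$. If $r$ is even and a spanning circuit exists, the first shape gives $0$, so $\varphi=0$. If $r$ is even and there is no spanning circuit, every decomposition has the second shape, so $\varphi=2$. The formula for $\eta=(r+\varphi)/2$ follows directly.

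The main point needing care is the claim that when $|C_1|=r$ exactly one subsequent lobe has size $2$. I will derive it from the rank identity $r=\sum_i(|P_i|-1)$: $C_1$ contributes $r-1$, so the later lobes contribute exactly $1$ in total. This uses only \Cref{lem:number-of-ears} and avoids any circuit-structure argument. The paving property enters only to force $|C_1|\in\{r,r+1\}$.
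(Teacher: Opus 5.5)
Your proposal is correct and follows the paper's proof essentially step for step. Paving forces $|C_1|\in\{r,r+1\}$, the identity $r=\sum_i(|P_i|-1)$ from \Cref{lem:number-of-ears} restricts the lobe sizes to $(r+1,1,\dots,1)$ or $(r,2,1,\dots,1)$, and \Cref{thm:ear} lets you start from a spanning circuit when one exists. Your extra remark about $r=1$ is harmless but unnecessary, since for odd $r$ both shapes have exactly one even lobe anyway.
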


\begin{proof}
Let $C_1,\ldots,C_t$ be an ear decomposition of $\cM$ with lobes $P_1,\ldots,P_t$. Since $\cM$ is paving, every circuit has size $r$ or $r+1$. Hence $|P_1|=|C_1|\in\{r,r+1\}$. By \Cref{lem:number-of-ears}, we have $\sum_{i=1}^t(|P_i|-1)=r$.

If $|P_1|=r+1$, then $|P_i|=1$ for every $i\geq 2$. If $|P_1|=r$, then exactly one of the lobes $P_2,\ldots,P_t$ has size $2$, and all the others have size $1$. Thus the possible lobe-size patterns are $r+1,1,\ldots,1$ and $r,2,1,\ldots,1$.

Suppose first that $r$ is odd. In both possible lobe-size patterns, there is exactly one even lobe. Hence every ear decomposition has exactly one even lobe, and therefore $\phi(\cM)=1$.

Now assume that $r$ is even. If $\cM$ has a spanning circuit, then this circuit has size $r+1$. By \Cref{thm:ear}, we may start an ear decomposition with this circuit. Then the lobe sizes are $r+1,1,\ldots,1$, so all lobes are odd. Hence $\phi(\cM)=0$.

Finally, suppose that $r$ is even and $\cM$ has no spanning circuit. Then no ear decomposition can start with an $(r+1)$-circuit, so every ear decomposition has lobe sizes $r,2,1,\ldots,1$. Thus every ear decomposition has exactly two even lobes, and therefore $\phi(\cM)=2$.

The stated values of $\eta(\cM)$ follow by substituting these values into $\eta(\cM)=(r(\cM)+\phi(\cM))/2$.
\end{proof}

We now use \Cref{lem:paving-eta-formula} to compare $\eta(\cM)$ with $\mu(\cM)$ for paving matroids.

\begin{thm}\label{thm:paving-bound}
Let $\cM$ be a connected paving matroid not isomorphic to $U_{1,1}$. Then
$$
\frac{2}{3}\mu(\cM)\le \eta(\cM)\le 2\mu(\cM).
$$
\end{thm}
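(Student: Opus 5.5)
By \Cref{lem:paving-eta-formula}, $\eta(\cM)$ is pinned to within one of $r/2$. More precisely, $r/2\le \eta(\cM)\le (r+2)/2$, where $r=r(\cM)$. So the plan is to bound $\mu(\cM)$ above and below in terms of $r$ alone, and then compare.

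\textbf{Upper bound on $\eta$.} First I find a large join. In a paving matroid every circuit has size $r$ or $r+1$. Any set $J$ with $|J|\le \lfloor r/2\rfloor$ therefore meets every circuit $C$ in at most $\lfloor r/2\rfloor\le |C|/2$ elements, so it is a join. Since $|E(\cM)|\ge r$ (indeed $\cM$ has a basis), we get $\mu(\cM)\ge\lfloor r/2\rfloor$.

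For $r\ge 2$ I then check $\eta\le 2\mu$ case by case using \Cref{lem:paving-eta-formula}.
\begin{itemize}
\item If $r$ is odd, then $\eta=(r+1)/2\le 2\cdot (r-1)/2$ whenever $r\ge 3$.
\item If $r$ is even, then $\eta\le r/2+1\le 2\cdot r/2$ whenever $r\ge 2$.
\end{itemize}
The small ranks need separate treatment. For $r=1$ a connected paving matroid is a parallel class of size at least $2$. Then $\eta=1$, and a single element is a join (it meets each $2$-circuit in one element), so $\mu=1$. For $r=0$ the matroid would be loops only, and connectedness together with excluding $U_{1,1}$ makes this case trivial or excluded. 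I will verify these edge cases directly.

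\textbf{Lower bound $\tfrac23\mu\le\eta$.} Next I bound $\mu$ from above. The claim is that no join has size more than $\lceil r/2\rceil+\epsilon$ for a small correction $\epsilon$, so that $\mu\le \tfrac32\cdot r/2$ roughly. The key step is to take a join $J$, which is independent, and extend it inside a basis $B$. Choose $e\notin B$. Then $B+e$ contains a circuit $C$, and $|C|\ge r$, so $C$ misses at most one element of $B+e$. Hence $|J\cap C|\ge |J|-1$, and the join condition gives $|J|-1\le (r+1)/2$, i.e.\ $\mu\le (r+3)/2$.

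Comparing with $\eta\ge r/2$, the inequality $\tfrac23\cdot\tfrac{r+3}{2}\le r/2$ holds for $r\ge 6$. For small $r$ I will sharpen the bound via parity. If $|C|=r$, then $|J\cap C|\le\lfloor r/2\rfloor$. If $|C|=r+1$, then $C=B+e$ contains all of $J$, so $|J|\le\lfloor (r+1)/2\rfloor$. Together these give $\mu\le \lfloor r/2\rfloor+1$ in general, and $\mu\le\lfloor (r+1)/2\rfloor$ when the relevant circuit is spanning.

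\textbf{Main obstacle.} The hardest part is making the small-rank cases and the tight constant come out exactly. The extremal case is presumably $r=2$ with a spanning circuit, where $\eta=1$ and one needs $\mu\le 3/2$, i.e.\ $\mu=1$. Likewise, $r=4$ with a spanning circuit gives $\eta=2$ and requires $\mu\le 3$; equality here is attained by \Cref{prop:incomparable} ($\mu=3$, $\eta=2$). I would run the above argument separately for each residue and each spanning/non-spanning case. When $\cM$ has no spanning circuit, $\eta=r/2+1$ absorbs the extra $+1$. When $r$ is even and $\cM$ has a spanning circuit, $\mu\le r/2+1$ must be compared with $\eta=r/2$: this needs $r/2+1\le \tfrac32\cdot r/2$, i.e.\ $r\ge 4$, so $r=2$ requires the direct check that $\mu=1$. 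For that check I would use connectivity: every $2$-set lies in a circuit of size $2$ or $3$, which violates the join condition. Tightness of both constants will follow from the example of \Cref{prop:incomparable} and from a suitable small uniform matroid.
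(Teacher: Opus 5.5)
Your proof is correct and is essentially the paper's argument: $\mu\ge\lfloor r/2\rfloor$ gives the upper bound, the fundamental circuit of an element outside a basis containing a maximum join gives $\mu\le\lfloor r/2\rfloor+1$, and the cases come from \Cref{lem:paving-eta-formula}, with $r=2$ treated separately. Your connectivity observation, that any two elements lie in a common circuit of size at most $3$, is a slightly cleaner way than the paper's to get $\mu=1$ when $r=2$. You should also state that $E(\cM)\setminus B\neq\emptyset$, which holds because a connected matroid with at least two elements has no coloops. One aside outside the statement: uniform matroids always satisfy $\mu=\eta$, so no uniform matroid shows that the constant $2$ is tight; the paper uses $F_7$ for this.
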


\begin{proof}
Write $r=r(\cM)$. If $r=0$, then $\cM$ consists of a single loop, so $\eta(\cM)=\mu(\cM)=0$. If $r=1$, then $\cM$ is a parallel class, so $\eta(\cM)=1=\mu(\cM)$.

Suppose that $r=2$. Let $\{x,y\}$ be a basis of $\cM$. Since $\cM$ is connected, $\cM$ is not equal to $U_{2,2}$, and the elements of $\cM$ cannot be contained in only the two parallel classes of $x$ and $y$. Hence there exists an element $z$ that is parallel to neither $x$ nor $y$. Then $\{x,y,z\}$ is dependent, since $r=2$, but it contains no $2$-circuit. Thus $\{x,y,z\}$ is a $3$-circuit. Therefore every basis of $\cM$ is contained in a $3$-circuit. It follows that no $2$-element set is a join, while every singleton is a join. Hence $\mu(\cM)=1$. Moreover, $\cM$ has a spanning circuit, and \Cref{lem:paving-eta-formula} gives $\eta(\cM)=1$. Thus the claim holds for $r=2$. We may assume from now on that $r\ge 3$.

We first show $\eta(\cM)\le 2\mu(\cM)$. Let $B$ be a basis of $\cM$, and choose $J\subseteq B$ with $|J|=\lfloor r/2\rfloor$. Since $\cM$ is paving, every circuit of $\cM$ has size $r$ or $r+1$. Hence, for every circuit $C$,
$$
|J\cap C|\le |J|
=\Big\lfloor\frac r2\Big\rfloor
\le \Big\lfloor\frac{|C|}{2}\Big\rfloor.
$$
Thus $J$ is a join, and therefore $\mu(\cM)\ge \lfloor r/2\rfloor$. If $r$ is even, then \Cref{lem:paving-eta-formula} gives $\eta(\cM)\in\{r/2,r/2+1\}$. Since $r\ge 3$, we have $r\ge 4$, and hence
$$
\eta(\cM)\le \frac{r}{2}+1\le r
=2\Big\lfloor\frac r2\Big\rfloor
\le 2\mu(\cM).
$$
If $r$ is odd, then \Cref{lem:paving-eta-formula} gives
$$
\eta(\cM)=\frac{r+1}{2}
=\frac{r-1}{2}+1
\le r-1
=2\Big\lfloor\frac r2\Big\rfloor
\le 2\mu(\cM),
$$
where the inequality uses $r\ge 3$. This proves $\eta(\cM)\le 2\mu(\cM)$.

Now we prove $\eta(\cM)\ge \frac23\mu(\cM)$. Let $J$ be a maximum join of $\cM$. Since every join is independent, we may extend $J$ to a basis $B$ of $\cM$. As $\cM$ is connected and $r\ge 2$, it has no coloops, and hence $B\ne E(\cM)$. Choose an element $e\in E(\cM)\setminus B$, and let $C$ be the fundamental circuit of $e$ with respect to $B$. Since $\cM$ is paving and $C\subseteq B+e$, the circuit $C$ has size $r$ or $r+1$. If $|C|=r+1$, then $C=B+e$, and therefore
$$
\mu(\cM)=|J|=|J\cap C|\le \Big\lfloor \frac{r+1}{2}\Big\rfloor.
$$
If $|C|=r$, then $C=B-b+e$ for some $b\in B$. Since $J\subseteq B$, we have $|J\cap C|\ge \mu(\cM)-1$. Thus
$$
\mu(\cM)-1\le |J\cap C|\le \Big\lfloor\frac r2\Big\rfloor,
$$
and hence $\mu(\cM)\le \lfloor r/2\rfloor+1$. In either case, we have
$\mu(\cM)\le \lfloor r/2\rfloor+1$.

If $r$ is odd, then \Cref{lem:paving-eta-formula} gives $\eta(\cM)=(r+1)/2=\lfloor r/2\rfloor+1$, and hence $\eta(\cM)\ge \mu(\cM)$. If $r$ is even and $\cM$ has no spanning circuit, then $\eta(\cM)=r/2+1=\lfloor r/2\rfloor+1$, and again $\eta(\cM)\ge \mu(\cM)$. It remains to consider the case where $r$ is even and $\cM$ has a spanning circuit. Then $\eta(\cM)=r/2$, while $\mu(\cM)\le r/2+1$. Since $r\ge 4$ in this case, we get
$$
\frac{\eta(\cM)}{\mu(\cM)}
\ge
\frac{r/2}{r/2+1}
=
\frac{r}{r+2}
\ge
\frac{2}{3}.
$$
Thus $\eta(\cM)\ge \frac23\mu(\cM)$ in all cases.
\end{proof}

\begin{rem}
    The bounds stated in \Cref{thm:paving-bound} are tight.
    
    For the upper bound, the Fano matroid $F_7$ is a connected paving matroid of rank $3$, and any two of its elements are contained in a $3$-circuit. Hence no $2$-element set is a join, while every singleton is a join, so $\mu(F_7)=1$. By \Cref{lem:paving-eta-formula}, we have $\eta(F_7)=(3+1)/2=2$. Thus $\eta(F_7)=2\mu(F_7)$, showing that the constant $2$ cannot be improved. 
    
    For the lower bound, the rank-4 ternary matroid considered in \Cref{prop:incomparable} attains equality. Note that the matroid is a paving matroid: it has rank $4$ while all circuits have size either $4$ or $5$.
\end{rem}

\subsection{General Matroids}
\label{sec:general}

We now prove general bounds relating $\eta(\cM)$, $\mu(\cM)$, and the rank of $\cM$.

\begin{thm}\label{thm:general-bound}
Let $\cM$ be a connected matroid not isomorphic to $U_{1,1}$. Then
\begin{equation*}
\left\lceil \frac{\mu(\cM)}{2}\right\rceil
\le \eta(\cM)
\le
\left\lceil \frac{r(\cM)}{2}\right\rceil \mu(\cM).
\end{equation*}
\end{thm}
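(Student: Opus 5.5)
The lower bound is a parity argument, and the upper bound splits by whether $\mu(\cM)\ge 2$; the real work is the case $\mu(\cM)=1$, where \Cref{thm:triangle-circuit} supplies a spanning circuit.

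\textbf{Lower bound.} By \eqref{eq:mu-at-most-rank} we have $\mu(\cM)\le r(\cM)$. Also $\varphi(\cM)\ge 0$, so $\eta(\cM)=(r(\cM)+\varphi(\cM))/2\ge r(\cM)/2$. Since $\eta(\cM)=(|E(\cM)|-\gamma(\cM))/2$ is an integer, this gives $\eta(\cM)\ge\lceil r(\cM)/2\rceil\ge\lceil \mu(\cM)/2\rceil$.

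\textbf{Upper bound, small cases.} Write $r=r(\cM)$.
\begin{itemize}
\item If $r=0$, then $\cM$ consists of a single loop and $\eta(\cM)=0$, so the bound holds.
\item If $r=1$, then $\eta(\cM)\le r=1$ by \eqref{eq:eta-rank-phi}. Any nonloop singleton is a join, since the only circuits through it are parallel pairs. Hence $\mu(\cM)\ge 1$ and the bound holds.
\end{itemize}
So assume $r\ge 2$. Again any nonloop singleton is a join, so $\mu(\cM)\ge1$.

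\textbf{Upper bound when $\mu(\cM)\ge 2$.} Here $\lceil r/2\rceil\,\mu(\cM)\ge 2\lceil r/2\rceil\ge r\ge\eta(\cM)$, again by \eqref{eq:eta-rank-phi}.

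\textbf{Upper bound when $\mu(\cM)=1$.} We must show $\eta(\cM)\le\lceil r/2\rceil$, which is equivalent to $\varphi(\cM)\le 1$ when $r$ is odd and $\varphi(\cM)=0$ when $r$ is even.
\begin{enumerate}
\item Take any independent $2$-set $\{x,y\}$. It is not a join, so some circuit $C$ satisfies $|C\cap\{x,y\}|>|C|/2$. Then $|C|\le 3$ and $\{x,y\}\subseteq C$. Because $\{x,y\}$ is independent, $|C|=3$. Thus every independent $2$-set lies in a $3$-circuit.
\item By \Cref{thm:triangle-circuit}, applied with $A$ a basis $B$ (using $|B|=r\ge2$), $B$ is contained in a circuit $C$ of size $r+1$, that is, a spanning circuit.
\item By \Cref{thm:ear}, the partial ear decomposition consisting of $C$ alone extends to an ear decomposition of $\cM$.
\item By \Cref{lem:number-of-ears}, $\sum_i(|P_i|-1)=r$ with $|P_1|-1=r$. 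So all later lobes are singletons, which are odd.
\item Hence the only possible even lobe is $P_1$, which is even exactly when $r$ is odd. This gives $\varphi(\cM)\le 1$ for $r$ odd and $\varphi(\cM)=0$ for $r$ even, so $\eta(\cM)=\lceil r/2\rceil=\lceil r/2\rceil\,\mu(\cM)$.
\end{enumerate}

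The main obstacle is this $\mu(\cM)=1$ case. The key point is that failure of every $2$-set to be a join forces triangles, with no simplification needed since the witnessing circuit has size exactly $3$. \Cref{thm:triangle-circuit} then turns this into a spanning circuit, which is what keeps $\varphi(\cM)$ small.
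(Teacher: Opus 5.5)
Your proof is correct and follows essentially the same route as the paper. The lower bound comes from parity and integrality, $\eta(\cM)\ge\lceil r(\cM)/2\rceil\ge\lceil\mu(\cM)/2\rceil$; the case $\mu(\cM)\ge 2$ follows from $\eta(\cM)\le r(\cM)$; and when $\mu(\cM)=1$, \Cref{thm:triangle-circuit} supplies a spanning circuit. The differences are cosmetic: you extend the spanning circuit via \Cref{thm:ear} and read off singleton lobes from \Cref{lem:number-of-ears}, whereas the paper builds that extension by hand, and you explicitly check that $\mu(\cM)\ge 1$ when $r(\cM)\ge 2$, which the paper leaves implicit.
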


\begin{proof}
Write $r=r(\cM)$. If $r=0$, then $\cM$ consists of a single loop, so $\eta(\cM)=\mu(\cM)=0$. If $r=1$, then $\cM$ is a parallel class, so $\eta(\cM)=1=\mu(\cM)$. Hence we may assume that $r\ge 2$.

We first show $\left\lceil \frac{\mu(\cM)}{2} \right\rceil \le \eta(\cM)$. By the definition of $\gamma(\cM)$, there is an ear decomposition of $\cM$ with exactly $\gamma(\cM)$ odd lobes. Since every ear decomposition of $\cM$ has $|E(\cM)|-r$ lobes, we have $\gamma(\cM)\le |E(\cM)|-r$. Moreover, the parity of the number of odd lobes is the same as the parity of $|E(\cM)|$. Thus $|E(\cM)|-\gamma(\cM)$ is even and at least $r$. Hence $|E(\cM)|-\gamma(\cM)\ge 2\lceil r/2\rceil$. Using $\eta(\cM)=\bigl(|E(\cM)|-\gamma(\cM)\bigr)/2$, we obtain $\eta(\cM)\ge \bigl\lceil r/2\bigr\rceil$. Since every join is independent, $\mu(\cM)\le r$, and therefore $$\eta(\cM)\ge \left\lceil \frac r2\right\rceil \ge \left\lceil \frac{\mu(\cM)}{2}\right\rceil.$$

Next we prove $\eta(\cM)\le
\left\lceil \frac{r(\cM)}{2}\right\rceil \mu(\cM)$. First suppose that $\mu(\cM)\ge 2$. By the general inequality $\eta(\cM)\le r(\cM)$, we have $\eta(\cM)\le r$. Since $\mu(\cM)\ge 2$, we get 
$\left\lceil r/2\right\rceil \mu(\cM)
\ge
2\left\lceil r/2\right\rceil
\ge r$.
Thus $\eta(\cM)\le r\le \lceil r/2\rceil\mu(\cM)$. 

It remains to consider the case $\mu(\cM)=1$. Let $J$ be any independent $2$-set of $E(\cM)$. Since $\mu(\cM)=1$, the set $J$ is not a join. Hence there is a circuit $C$ of $\cM$ such that $|J\cap C|>|C|/2$. If $|J\cap C|\leq 1$, then this inequality forces $|C|=1$, contradicting the independence of $J$. Hence $|J\cap C|=2$. It follows that $|C|\leq 3$, while the independence of $J$ excludes $|C|=2$. Therefore $|C|=3$, and every independent $2$-set of $\cM$ is contained in a $3$-circuit.

Let $B$ be a basis of $\cM$. Since $|B|=r\geq 2$, \Cref{thm:triangle-circuit} implies that there is an element $e\in E(\cM)\setminus B$ such that $C_1=B+e$ is a circuit. Write $E(\cM)\setminus C_1=\{x_2,\ldots,x_s\}$ and set $D_1=C_1$. For each $j=2,\ldots,s$, choose a circuit $C_j\subseteq D_{j-1}+x_j$ with $x_j\in C_j$, and put $D_j=D_{j-1}\cup C_j$. Such a circuit exists because $B\subseteq D_{j-1}$ spans $\cM$. Then $C_j\setminus D_{j-1}=\{x_j\}$ for every $j\geq 2$, so $C_1,\ldots,C_s$ is an ear decomposition of $\cM$. Its lobe sizes are $r+1,1,\ldots,1$. Hence
\begin{equation*}
\phi(\cM)\leq
\begin{cases}
0, & \text{if } r \text{ is even},\\
1, & \text{if } r \text{ is odd}.
\end{cases}
\end{equation*}
Therefore
\begin{equation*}
\eta(\cM)=\frac{r+\phi(\cM)}{2}\leq \left\lceil\frac r2\right\rceil.
\end{equation*}
Since $\mu(\cM)=1$, this gives $\eta(\cM)\leq \lceil r/2\rceil\mu(\cM)$.
\end{proof}

\begin{rem} 
    The bounds stated in \Cref{thm:general-bound} are tight. 
    
    For the upper bound, let $\cB_r$ be the complete binary matroid of rank $r\ge 2$. Every pair of elements is contained in a $3$-circuit. Hence no $2$-element set is a join, while every singleton is a join, so $\mu(\cB_r)=1$. By the argument in the proof for the case $\mu=1$, we have $\eta(\cB_r)\le \lceil r/2\rceil$. On the other hand, the proof above gives $\eta(\cB_r)\ge \lceil r/2\rceil$. Thus $\eta(\cB_r)=\lceil r/2\rceil=\lceil r/2\rceil\mu(\cB_r)$, so the upper bound is attained. 
    
    For the lower bound, the rank-$4$ ternary matroid considered in \Cref{prop:incomparable} attains equality. For this matroid, $\mu=3$ and $\eta=2$, so $\eta=\lceil \mu/2\rceil$. 
\end{rem}

\section{Regular Matroids}
\label{sec:regular}

In this section, we prove a constant-factor upper bound on $\eta(\cM)$ in terms of $\mu(\cM)$ for regular matroids. The proof uses the graphic case, a cographic estimate, the exceptional matroid $R_{10}$, and Seymour's decomposition theorem.

\subsection{Preliminary Reductions}
\label{sec:preparations}

We start with a lemma on the effect of deleting parallel elements on $\mu$ and $\eta$.

\begin{lem}
\label{lem:parallel-deletion-mu-eta}
Let $\cM$ be a connected matroid of rank at least $2$, and let $e$ and $f$ be parallel elements of $\cM$. Set $\cM_0=\cM\backslash f$.
Then $\cM_0$ is connected. Furthermore, if $\cM_0$ is not isomorphic to $U_{1,1}$, then $\mu(\cM)=\mu(\cM_0)$ and $\eta(M) = \eta(\cM_0)$.
\end{lem}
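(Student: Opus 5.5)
The plan is to treat connectivity, $\mu$, and $\eta$ separately, using throughout that $\cM_0$ has the same rank as $\cM$ and that circuits of $\cM$ are either circuits of $\cM_0$ or contain $f$.

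\emph{Connectivity.} We show that any two distinct elements $x,y$ of $\cM_0$ lie in a common circuit of $\cM_0$. Since $\cM$ is connected, some circuit $C$ of $\cM$ contains $x$ and $y$. If $f\notin C$, we are done. If $f\in C$ and $e\notin C$, then $C-f+e$ is a circuit of $\cM_0$: replacing an element by a parallel element preserves circuits. If $e,f\in C$, then $C=\{e,f\}$, so $\{x,y\}=\{e,f\}$; this contradicts $f\notin E(\cM_0)$. Also, $\cM_0$ has rank at least $2$.

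\emph{Equality of $\mu$.} Every join of $\cM_0$ is independent. We first show it is also a join of $\cM$. A circuit $C\ni f$ of $\cM$ is either $\{e,f\}$, which meets $J$ in at most one element, or a set with $e\notin C$. In the latter case $C'=C-f+e$ is a circuit of $\cM_0$ of the same size, and $|J\cap C|\le |J\cap C'|\le |C'|/2$. Hence $\mu(\cM)\ge\mu(\cM_0)$.

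Conversely, let $J$ be a maximum join of $\cM$. It is independent, so it contains at most one of $e,f$. If $f\in J$, swap $f$ for $e$. The resulting set $J'$ is still a join of $\cM$, since the circuit-replacement bijection $C\mapsto C\triangle\{e,f\}$ on circuits avoiding $\{e,f\}\cap C=\{e,f\}$ preserves sizes and intersections. Now $J'\subseteq E(\cM_0)$, and it is a join of $\cM_0$ because circuits of $\cM_0$ are circuits of $\cM$. So $\mu(\cM_0)\ge\mu(\cM)$.

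\emph{Equality of $\eta$.} Since the ranks agree, it suffices to show $\varphi(\cM)=\varphi(\cM_0)$. Given an ear decomposition of $\cM_0$, we append the circuit $\{e,f\}$. It meets the current union in $e$, adds the lobe $\{f\}$, and minimality (E3) is automatic for a singleton lobe. This gives an ear decomposition of $\cM$ with one more lobe, which is odd. Thus $\varphi(\cM)\le\varphi(\cM_0)$.

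For the reverse direction, start from an optimal ear decomposition of $\cM$. We may assume $\{e,f\}$ is never an ear unless $f$ or $e$ is its singleton lobe. Indeed, $\{e,f\}$ cannot be $C_1$ because we could reorder; in any case $\{e,f\}$ can only occur as an ear with a lobe of size at most one, since it meets the previous union.

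The main obstacle is transforming this decomposition into one of $\cM_0$ without increasing the number of even lobes. The idea is to replace $f$ by $e$ in every circuit. If $f$ first appears before $e$, swap the roles of $e$ and $f$ (by symmetry, deleting $e$ gives an isomorphic matroid), so assume $e$ appears no later than $f$. Let $P_k$ be the lobe containing $f$.

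If $P_k=\{f\}$, drop that ear. Every other circuit $C_i$ containing $f$ is replaced by $C_i\triangle\{e,f\}$; these are circuits of $\cM_0$ when $e\notin C_i$. We then check (E1)–(E3) for the modified sequence using \Cref{lem:lobe-contained}, noting that the unions $D_i$ change only by $f$, so the lobes are otherwise unchanged.

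If $|P_k|\ge2$ with $f\in P_k$, then $e$ lies in an earlier union $D_{k-1}$. In that case $C_k-f+e$ would be a circuit with new part $P_k-f$ properly contained in $P_k$. This contradicts (E3), provided $P_k-f$ is nonempty, which holds since $|P_k|\ge 2$. So this case cannot occur, and the parity profile of the remaining lobes is preserved. This gives $\varphi(\cM_0)\le\varphi(\cM)$. The careful verification of (E3) after the substitution, in particular checking that minimal new parts in $\cM_0$ correspond to those in $\cM$ via the contraction viewpoint of \Cref{lem:number-of-ears}, is the step I expect to require the most care.
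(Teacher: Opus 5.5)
Your proposal is essentially the paper's proof: the same exchange arguments give connectivity and $\mu(\cM)=\mu(\cM_0)$. For $\eta$ you use the same reduction: the later of the two parallel elements is a singleton lobe, you drop that ear and replace $f$ by $e$ in the later ears, and (E3) transfers because a circuit of $\cM_0$ is a circuit of $\cM$ with the same new part relative to $D_{i-1}-f$ as relative to $D_{i-1}$. Appending $\{e,f\}$ at the end, rather than inserting it right after the first ear containing $e$, is a harmless simplification once you note that old ears keep (E3) in $\cM$, since every circuit through $f$ has $f$ as a new element.

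Two details need fixing. First, the map $C\mapsto C\triangle\{e,f\}$ should be the transposition of $e$ and $f$, since it must fix circuits avoiding both elements. Second, the case $C_1=\{e,f\}$ needs the paper's parity check: moving $\{e,f\}$ after $C_2$ turns the first two lobe sizes $2,\,|C_2|-1$ into $|C_2|,\,1$, and so never loses odd lobes.
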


\begin{proof}
Let $x,y\in E(\cM_0)$ be two distinct elements.
Since $\cM$ is connected, there is a circuit $C$ of $\cM$ containing both $x$ and $y$.
If $f\notin C$, then $C$ is a circuit of $\cM_0$.
If $f\in C$, then $e\notin C$, and $(C\setminus\{f\})\cup\{e\}$ is a circuit of $\cM_0$ containing $x$ and $y$.
Thus $\cM_0$ is a connected matroid. From now on, we assume that $\cM_0$ is not isomorphic to $U_{1,1}$.

We first show equality for $\mu$. Let $J_0$ be a join of $\cM_0$.
We claim that $J_0$ is also a join of $\cM$.
Let $C$ be a circuit of $\cM$.
If $f\notin C$, then $C$ is a circuit of $\cM_0$, so $|J_0\cap C|\le |C|/2$.
If $C=\{e,f\}$, then $|J_0\cap C|\le 1=|C|/2$.
If $f\in C$ and $e\notin C$, then $C-f+e$ is a circuit of $\cM_0$, so $|J_0\cap C|\le |J_0\cap (C-f+e)|\le |C|/2$.
Hence $J_0$ is a join of $\cM$, and then $\mu(\cM_0)\le \mu(\cM)$.

Conversely, let $J_1$ be a join of $\cM$.
If $f\notin J_1$, then $J_1$ is also a join of $\cM_0$.
If $f\in J_1$, then let $J_0=J_1-f+e$. 
Let $C$ be a circuit of $\cM_0$, and then $C$ is a circuit of $\cM$. 
If $e\notin C$, then $|J_0\cap C|\le |J_1 \cap C| \le \frac{|C|}{2}$. 
If $e\in C$, then $C-e+f$ is a circuit of $\cM$, so $|J_0\cap C| = |J_1 \cap (C-e+f)| \le \frac{|C|}{2}$.
Thus $J_0$ is a join of $\cM_0$ and $|J_0|=|J_1|$.
Hence $\mu(\cM)\le \mu(\cM_0)$, and therefore $\mu(\cM)=\mu(\cM_0)$.

Now we consider equality for $\eta$. We first show that $\eta(\cM)\le \eta(\cM_0)$.
Let $C_1,\dots,C_t$ be an ear decomposition of $\cM_0$ with $\gamma(\cM_0)$ odd lobes, and let $D_i=\bigcup_{k=1}^i C_k$ for $0\le i\le t$, where $D_0=\varnothing$.
Let $j$ be the least index such that $e\in D_j$.
Then $e\in C_j\setminus D_{j-1}$.
Insert the circuit $\{e,f\}$ immediately after $C_j$.
Its new lobe is the singleton $\{f\}$.
The ears before and including $C_j$ are unchanged.

For $i>j$, the only possible issue is  \ref{it:e3} relative to $D_{i-1}+f$.
Suppose that a circuit $C$ of $\cM$ satisfies \ref{it:e1} and \ref{it:e2} relative to $D_{i-1}+f$ and
$C\setminus(D_{i-1}+f)\subsetneq C_i\setminus D_{i-1}$.
If $f\notin C$, then $C$ is a circuit of $\cM_0$, contradicting \ref{it:e3}.
If $f\in C$, then $e\notin C$, and $C-f+e$ is a circuit of $\cM_0$.
Since $e\in D_{i-1}$, then $(C-f+e) \setminus D_{i-1}\subsetneq C_i\setminus D_{i-1}$ which contradicts \ref{it:e3}. 
Hence the new sequence is an ear decomposition of $\cM$ with one more odd lobe.
It follows that $\gamma(\cM)\ge \gamma(\cM_0)+1$, and therefore
$$\eta(\cM)=\frac{|E(\cM)|-\gamma(\cM)}2\le \frac{|E(\cM_0)|-\gamma(\cM_0)}2=\eta(\cM_0).$$

We next prove that $\eta(\cM_0)\le \eta(\cM)$.
Let $C_1,\dots,C_t$ be an ear decomposition of $\cM$ with $\gamma(\cM)$ odd lobes, and let $D_i=\bigcup_{k=1}^i C_k$ for $0\le i\le t$, where $D_0=\varnothing$. 
We modify this decomposition such that some circuit is the circuit $\{x,y\}$ with singleton lobe $\{x\}$, where $\{x,y\}=\{e,f\}$. 

If $C_1\ne\{e,f\}$, then let $x$ be the element of $\{e,f\}$ that appears later in the decomposition, let $y$ be the other one, and let $j$ be the least index such that $x\in D_j$.
Then $y\in D_{j-1}$.
The circuit $\{x,y\}$ satisfies \ref{it:e1} and \ref{it:e2} relative to $D_{j-1}$, then \ref{it:e3} gives 
$C_j\setminus D_{j-1}=\{x\}$. Thus we may replace $C_j$ by $\{x,y\}$, and the new sequence is still an ear decomposition of $\cM$.

If $C_1=\{e,f\}$, then $t\ge 2$ as the rank is at least 2.
Since $\{e,f\}$ is a $2$-circuit, $C_2$ contains exactly one of $e$ and $f$.
Let $y$ be that element, and let $x$ be the other one.
Then $C_2,\{x,y\},C_3,\dots,C_t$ is an ear decomposition of $\cM$.
The original first two lobe sizes are $2$ and $|C_2|-1$, while the new first two lobe sizes are $|C_2|$ and $1$.
If $|C_2|$ is odd, then this new ear decomposition would have two more odd lobes than the original one, contradicting the maximality of $\gamma(\cM)$.
Hence $|C_2|$ is even, and the new ear decomposition has the same number of odd lobes as the original one.

Therefore, we may assume that for some $j$ the ear $C_j$ is the circuit $\{x,y\}$ and its lobe is the singleton $\{x\}$, where $\{x,y\}=\{e,f\}$. 
Note that $\cM\backslash x\cong \cM_0$.
Since $x\notin D_{j-1}$, the circuits $C_1,\dots,C_{j-1}$ are circuits of $\cM\backslash x$.
Delete the circuit  $C_j$. 
For each $i>j$, define
$$
        \widetilde C_i=
        \begin{cases}
                C_i, & \text{if } x\notin C_i,\\
                C_i-x+y, & \text{if } x\in C_i.
        \end{cases}
$$
Since $x$ and $y$ are parallel elements, each $\widetilde C_i$ is a circuit of $\cM$ not containing $x$, and hence a circuit of $\cM\backslash x$.

Consider the circuit sequence $C_1,\dots,C_{j-1},\widetilde C_{j+1},\dots,\widetilde C_t$. 
For each $i>j$, the union of the previous circuits in this sequence is $D_{i-1}-x$.
If $x\notin C_i$, then
$\widetilde C_i\cap (D_{i-1}-x)=C_i\cap D_{i-1}\ne \varnothing$.
If $x\in C_i$, then $y\in \widetilde C_i\cap (D_{i-1}-x)$.
Also,
$\widetilde C_i\setminus (D_{i-1}-x)=C_i\setminus D_{i-1}$.
Hence \ref{it:e1} and \ref{it:e2} are preserved.
It remains to check \ref{it:e3}.
Suppose that a circuit $C$ of $\cM\backslash x$ satisfies \ref{it:e1} and \ref{it:e2} relative to $D_{i-1}-x$ and
$$
        C\setminus (D_{i-1}-x)
        \subsetneq
        \widetilde C_i\setminus (D_{i-1}-x)
        =
        C_i\setminus D_{i-1}.
$$
Since $x\in D_{i-1}$ and $x\notin C$, we have
$C\setminus D_{i-1}=C\setminus (D_{i-1}-x)$.
Thus $C$ is a circuit of $\cM$ satisfying \ref{it:e1} and \ref{it:e2} relative to $D_{i-1}$ and with
$C\setminus D_{i-1}\subsetneq C_i\setminus D_{i-1}$,
contradicting the minimality of $C_i$.
Therefore the new sequence is an ear decomposition of $\cM\backslash x$. 
Its lobes are the same as the original lobes, except that the odd singleton lobe $\{x\}$ is removed.
Hence it has $\gamma(\cM)-1$ odd lobes.
Therefore $\gamma(\cM\backslash x)\ge \gamma(\cM)-1$.
Since $\cM\backslash x\cong \cM_0$, we obtain $\gamma(\cM_0)\ge \gamma(\cM)-1$.
Thus
$$\eta(\cM_0)=\frac{|E(\cM_0)|-\gamma(\cM_0)}2\le \frac{(|E(\cM)|-1)-(\gamma(\cM)-1)}2=\eta(\cM).$$
Combining the two inequalities gives $\eta(\cM)=\eta(\cM_0)$, completing the proof.
\end{proof}

\subsection{Cographic Matroids}
\label{sec:cographic}

Now we prove one of the main results of this section, which gives a constant factor approximation of $\eta$ by $\mu$ for cographic matroids.

\begin{thm} \label{thm:cographic-eta-mu-bound} 
Let $h(x)=-x\log_2 x-(1-x)\log_2(1-x)$ denote the binary entropy function, and let $\alpha_0\in(0,1/2)$ be the unique solution of $h(\alpha_0)=1/3$. Set $c_0=1/(3\alpha_0)$, so that $c_0\approx 5.42$. Then every connected cographic matroid $\cM$ not isomorphic to $U_{1,1}$ satisfies $\eta(\cM)\le c_0\mu(\cM)$. 
\end{thm}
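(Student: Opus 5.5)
We bound the two sides separately in terms of graph parameters of a graph $G$ with $\cM=\cM^*(G)$. Since $\cM$ is connected and not $U_{1,1}$, we may take $G$ to be $2$-connected (in particular loopless and bridgeless) on $n$ vertices and $m$ edges. Then $r(\cM)=m-n+1$.

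For the upper bound on $\eta$ we use \eqref{eq:eta-rank-phi}, which gives $\eta(\cM)\le r(\cM)=m-n+1<m-n+2$. We first reduce to the case where $G$ has no parallel edges, or where the relevant structure is controlled. Series elements of $\cM$ are parallel edges of $G$; parallel elements of $\cM$ are series pairs in $G$, i.e.\ degree-$2$ vertices. By \Cref{lem:parallel-deletion-mu-eta}, deleting one element of a parallel pair of $\cM$ preserves both $\mu$ and $\eta$. Hence we may suppress degree-$2$ vertices of $G$ and assume $G$ has minimum degree at least $3$, after treating the small cases (rank $\le 1$) directly. In fact we will not need this reduction for $\eta$ beyond the bound $\eta\le r$, but it helps the $\mu$ side.

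For the lower bound on $\mu$ we use the equality $\mu(\cM^*(G))=F_{\max}(G)=\rho(\cB(G))$ proved above, and a counting (sphere-covering) argument. The cut space $\cB(G)$ has dimension $n-1$, so it contains $2^{n-1}$ codewords. If the covering radius is $\rho$, the Hamming balls of radius $\rho$ around codewords cover $\mathbb F_2^m$, so
\[
2^{n-1}\sum_{i\le \rho}\binom{m}{i}\ge 2^m .
\]
Writing $\rho=\alpha m$ with $\alpha<1/2$ and using $\sum_{i\le\alpha m}\binom mi\le 2^{h(\alpha)m}$, this gives $h(\alpha)m\ge m-n+1$.

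The next step, which I expect to be the main obstacle, is turning this into the linear bound $m-n+1\le 3\alpha_0 c_0\,\mu=c_0\cdot 3\alpha_0\mu$, i.e.\ $r\le c_0\mu$. Suppose $\mu<r/c_0=3\alpha_0 r$. Since $r\le m$, we get $\mu/m<3\alpha_0\cdot r/m$. I would split into two cases according to the ratio $r/m$.

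If $r\ge m/3$, i.e.\ $n-1\le 2m/3$, then $h(\mu/m)\ge r/m\ge 1/3=h(\alpha_0)$. Since $h$ is increasing on $(0,1/2)$, this forces $\mu\ge \alpha_0 m\ge \alpha_0 r\cdot 3\cdot(1/3)$, which is not strong enough as stated. So instead I would use the entropy inequality more carefully: set $\beta=r/m$ and $\alpha=\mu/m$, so that $h(\alpha)\ge\beta$, and we need $\beta\le c_0\alpha=\alpha/(3\alpha_0)$. The function $\alpha\mapsto h(\alpha)/\alpha$ is decreasing, so $h(\alpha)\le \alpha/(3\alpha_0)$ exactly when $\alpha\ge\alpha_0$. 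For $\alpha<\alpha_0$ this bound fails, so we need a second, elementary bound.

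The second ingredient handles small $\alpha$, i.e.\ $\beta<1/3$. Here I would use the minimum-degree-$3$ reduction: with all degrees at least $3$ we have $m\ge 3n/2$, hence $r\ge m/3+1$, i.e.\ $\beta>1/3$. This places us in the regime where $h(\alpha)\ge\beta>1/3$, so $\alpha>\alpha_0$, and the monotonicity of $h(\alpha)/\alpha$ then gives $\beta\le h(\alpha)\le\alpha/(3\alpha_0)$, that is $r\le c_0\mu$. Finally $\eta\le r\le c_0\mu$.

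The delicate points to verify are three:
\begin{enumerate}
\item The suppression of degree-$2$ vertices must be carried out through \Cref{lem:parallel-deletion-mu-eta} so that connectivity is kept and $U_{1,1}$ is avoided; the small cases left over need a direct check.
\item The binomial estimate requires $\alpha\le 1/2$. This holds because every join meets each star, a bond of $G$, in at most half of its edges, so $\mu\le m/2$.
\item The precise entropy bound must be handled correctly, including the lower-order $+1$ terms in the sphere-covering inequality.
\end{enumerate}
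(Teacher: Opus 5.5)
Your proposal is correct and follows the paper's proof. Both arguments reduce via \Cref{lem:parallel-deletion-mu-eta} to minimum degree at least $3$, so that $r/m>1/3$. They then combine $\eta\le r$ with the sphere-covering bound and the estimate $\sum_{i\le \mu}\binom{m}{i}\le 2^{m h(\mu/m)}$, and conclude from $h(\mu/m)\ge r/m>1/3$ together with the monotonicity of $h(\alpha)/\alpha$.

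The differences are cosmetic. You obtain $\mu\le m/2$ from the star inequalities, where the paper treats $\mu>m/2$ as a trivial case. Your aside identifying all series pairs of $G$ with degree-$2$ vertices is inaccurate, since a $2$-edge bond need not be a star; this is harmless because you only use the correct direction, that a degree-$2$ vertex yields a parallel pair of $\cM$.
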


\begin{proof}
If $|E(\cM)|=1$, then, since $\cM\not\cong U_{1,1}$, the unique element is a loop. Hence $\mu(\cM)=\eta(\cM)=0$. If $r(\cM)=1$ and $|E(\cM)|\ge 2$, then $\cM$ is a parallel class, and so $\mu(\cM)=1$. Taking one $2$-circuit first and then adding the remaining elements by singleton lobes gives an ear decomposition with $|E(\cM)|-2$ odd lobes; this is best possible since the first lobe is even. Hence $\eta(\cM)=1$. Thus the theorem holds when $|E(\cM)|=1$ or $r(\cM)=1$.

Assume from now on that $|E(\cM)|\ge 2$ and $r(\cM)\ge 2$. By repeatedly applying \Cref{lem:parallel-deletion-mu-eta}, it suffices to prove the theorem when $\cM$ is simple. Write $\cM=M^*(G)$. Since $\cM$ is connected, all edges of $G$ lie in a single 2-edge-connected component; deleting isolated vertices if necessary, we may assume that $G$ is connected. Let $n=|V(G)|$, $m=|E(G)|$, and $r=r(\cM)=m-n+1$.

Since $\cM$ is simple, $G$ has no bond of size $1$ or $2$. Indeed, the circuits of $M^*(G)$ are precisely the bonds of $G$, so a bond of size $1$ would be a loop of $\cM$, and a bond of size $2$ would be a parallel pair of $\cM$. It follows that every vertex of $G$ has degree at least $3$: if some vertex had degree at most $2$, then the cut $\delta(v)$ would be a bond of size $2$. Hence $3n\le 2m$, and therefore
\begin{equation*}
        r=m-n+1\ge m-\frac{2m}{3}+1>\frac{m}{3}.
\end{equation*}
Set $t=r/m$. Since $G$ is connected and $n\ge 2$, we also have $r=m-n+1<m$. Thus $t\in(1/3,1)$.

Let $\cB=\cB(G)$ be the cutset code of $G$. Then we have $\mu(\cM)=\rho(\cB)$, where $\rho(\cB)$ is the covering radius of $\cB$. Since $G$ is connected, $\dim (\cB)=n-1=m-r$. The sphere covering bound~\cite{cohen1997covering} gives $2^{m-r}\sum_{i=0}^{\mu(\cM)}\binom{m}{i}\ge 2^m$,
and hence
\begin{equation}
\label{eq:sphere-covering-binomial-lower-bound}
        \sum_{i=0}^{\mu(\cM)}\binom{m}{i}\ge 2^r.
\end{equation}

For $s\in(0,1)$, let $\alpha(s)\in(0,1/2)$ be defined by $h(\alpha(s))=s$. This is well-defined because $h$ is continuous, $h(x)\to 0$ as $x\to 0^+$, $h(1/2)=1$, and $h$ is strictly increasing on $(0,1/2)$. In particular, $\alpha(t)$ is defined.

\begin{cl}
\label{claim:cographic-mu-alpha-lower-bound}
We have $\mu(\cM)\ge \alpha(t)m$.
\end{cl}

\begin{claimproof}
If $\mu(\cM)>m/2$, then the claim is immediate, since $\alpha(t)<1/2$. We may therefore assume that $\mu(\cM)\le m/2$. By \eqref{eq:sphere-covering-binomial-lower-bound}, we have $\mu(\cM)>0$. Let $x=\frac{\mu(\cM)}{m-\mu(\cM)}$.
Then $0<x\le 1$. Hence, for every integer $i$ with $0\le i\le \mu(\cM)$, we have $x^i\ge x^{\mu(\cM)}$. Therefore
\begin{equation*}
        (1+x)^m
        =
        \sum_{i=0}^m\binom{m}{i}x^i
        \ge
        \sum_{i=0}^{\mu(\cM)}\binom{m}{i}x^i
        \ge
        x^{\mu(\cM)}\sum_{i=0}^{\mu(\cM)}\binom{m}{i}.
\end{equation*}
It follows that
\begin{equation*}
        \sum_{i=0}^{\mu(\cM)}\binom{m}{i}
        \le
        x^{-\mu(\cM)}(1+x)^m.
\end{equation*}
Set $p=\mu(\cM)/m$. Since $x=p/(1-p)$ and $1+x=1/(1-p)$, we get
\begin{equation*}
        x^{-\mu(\cM)}(1+x)^m
        =
        \left(\frac{1-p}{p}\right)^{pm}
        \left(\frac{1}{1-p}\right)^m
        =
        p^{-pm}(1-p)^{-(1-p)m}
        =
        2^{m h(p)}.
\end{equation*}
Together with \eqref{eq:sphere-covering-binomial-lower-bound}, this gives
\begin{equation*}
        t=\frac{r}{m}\le h(p)=h\!\left(\frac{\mu(\cM)}{m}\right).
\end{equation*}
If $p=1/2$, then $p\ge \alpha(t)$. If $p<1/2$, then the monotonicity of $h$ on $(0,1/2)$ gives $p\ge \alpha(t)$. Thus $\mu(\cM)/m\ge \alpha(t)$, proving the claim.
\end{claimproof}

By Claim~\ref{claim:cographic-mu-alpha-lower-bound} and the general bound $\eta(\cM)\le r(\cM)$, we obtain
\begin{equation*}
        \eta(\cM)\le r = tm \le \frac{t}{\alpha(t)}\mu(\cM).
\end{equation*}
Define $g(\alpha)=h(\alpha)/\alpha$ for $0<\alpha<1/2$. Since $h'(\alpha)=\log_2(\frac{1-\alpha}{\alpha})$,
we have
\begin{equation*}
        g'(\alpha)
        =
        \frac{\alpha h'(\alpha)-h(\alpha)}{\alpha^2}
        =
        \frac{\log_2(1-\alpha)}{\alpha^2}
        <0.
\end{equation*}
Thus $g$ is strictly decreasing. Since $t>1/3$, we have $\alpha(t)>\alpha_0$, and hence
\begin{equation*}
        \frac{t}{\alpha(t)}
        =
        g(\alpha(t))
        <
        g(\alpha_0)
        =
        \frac{1}{3\alpha_0}
        =
        c_0.
\end{equation*}
Therefore $\eta(\cM)<c_0\mu(\cM)$, where $\alpha_0\approx 0.061$ and $c_0\approx 5.42$.
\end{proof}

\subsection{The Matroid \texorpdfstring{$R_{10}$}{R10}}
\label{sec:r10}

The matroid $R_{10}$ is one of the basic building blocks in Seymour's decomposition theorem for regular matroids. In what follows, we determine the values of the two parameters for $R_{10}$. We use the standard rank-$5$ binary representation
\begin{equation*}
R_{10}=M\!\left[
\begin{array}{ccccc|ccccc}
1&0&0&0&0&1&0&0&1&1\\
0&1&0&0&0&1&1&0&0&1\\
0&0&1&0&0&1&1&1&0&0\\
0&0&0&1&0&0&1&1&1&0\\
0&0&0&0&1&0&0&1&1&1
\end{array}\right].
\end{equation*}
Label the columns of the two blocks by
$e_1,\dots,e_5$ and $d_1,\dots,d_5$, respectively, with indices
taken modulo $5$.

\begin{prop}
\label{prop:r10-example}
The matroid $R_{10}$ satisfies $\mu(R_{10})=3$, $\gamma(R_{10})=4$, and
$\eta(R_{10})=3$. In particular, $\mu(R_{10})=\eta(R_{10})$.
\end{prop}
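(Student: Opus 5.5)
The plan is to compute $\gamma(R_{10})$ and $\eta(R_{10})$ first, then get $\mu(R_{10})\le 3$ for free from \Cref{thm:binary-bound}, and finally exhibit an explicit join of size $3$.

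\textbf{Circuit sizes.} In the representation from the introduction, every element of $R_{10}$ is a vector of $\mathbb F_2^5$ with exactly three nonzero entries. If $C$ is a circuit, its columns sum to $0$. Taking the coordinate sum, we get $3|C|\equiv 0 \pmod 2$, so every circuit, and indeed every cycle, has even size. $R_{10}$ is simple, so there is no circuit of size $1$ or $2$. Since $r(R_{10})=5$, every circuit has size at most $6$. Hence every circuit has size $4$ or $6$.

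\textbf{Computing $\gamma$ and $\eta$.} $R_{10}$ is connected; this also follows once we exhibit an ear decomposition, using \Cref{thm:ear}. By \Cref{lem:number-of-ears}, every ear decomposition has $10-5=5$ lobes with $\sum_i(|P_i|-1)=5$. The first lobe $P_1=C_1$ is a circuit, hence even, so $\gamma(R_{10})\le 4$. Now take a spanning $6$-circuit. In the displayed standard representation, $d_1+d_3=(1,1,0,1,1)^T=e_1+e_2+e_4+e_5$, so $\{d_1,d_3,e_1,e_2,e_4,e_5\}$ has zero sum. It contains no $4$-circuit: such a circuit would be a zero-sum $4$-subset, and a quick check of the few candidates (the $e$'s are independent, so any such subset must use $d_1$ or $d_3$) rules them out. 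Hence it is a $6$-circuit. Since $6=r+1$, it spans, and we start an ear decomposition with it. Every remaining element then lies in a circuit inside $D_1+x$ whose new part is $\{x\}$, exactly as in the proof of \Cref{thm:general-bound}. This gives lobe sizes $6,1,1,1,1$, so $\gamma(R_{10})=4$ and $\eta(R_{10})=(10-4)/2=3$.

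\textbf{Computing $\mu$.} $R_{10}$ is binary and connected, so \Cref{thm:binary-bound} gives $\mu(R_{10})\le\eta(R_{10})=3$. For the lower bound we need a $3$-set $J$ meeting every circuit in at most $|C|/2$ elements. This is automatic for $6$-circuits. For $4$-circuits it suffices that $J$ lies in no $4$-circuit. In a binary simple matroid, an independent triple $\{x,y,z\}$ lies in a $4$-circuit if and only if $x+y+z$ is another element. Take $J=\{e_1,e_2,e_4\}$. Its sum $(1,1,0,1,0)^T$ is nonzero, so $J$ is independent. This vector is none of the $e_i$ and none of the columns $d_1,\dots,d_5$. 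Hence $J$ is a join, and $\mu(R_{10})=3=\eta(R_{10})$.

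The only step requiring care is verifying that the chosen $6$-set is really a circuit, i.e.\ that no $4$-subset of it sums to zero. This is a short finite check, and if it failed one would simply try another pair such as $d_1+d_2$ or $d_1+d_4$.
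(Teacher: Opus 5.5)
Your proof is correct, but for $\mu$ it takes a genuinely different route from the paper. The ear-decomposition part is essentially the paper's. Both bound $\gamma(R_{10})\le 4$ by the evenness of the first lobe, and both attain it with a spanning $6$-circuit followed by four singleton lobes. Your circuit $\{d_1,d_3,e_1,e_2,e_4,e_5\}$ is the $i=4$ member of the same cyclic family as the paper's $C_1=\{e_1,e_2,e_3,e_4,d_3,d_5\}$.

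On the join side, the paper enumerates all thirty circuits of $R_{10}$ and reads off evenness from that list. It checks $J=\{e_1,e_2,e_4\}$ against every circuit. It proves $\mu(R_{10})\le 3$ directly by verifying that every $4$-subset is a $4$-circuit or lies in a $6$-circuit.

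You replace the enumeration by algebra. Odd column weights give even circuits, and an independent triple lies in a $4$-circuit only if $x+y+z$ is an element. You then get $\mu(R_{10})\le\eta(R_{10})=3$ for free from \Cref{thm:binary-bound}.

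What each route buys: yours is shorter, avoids the $\binom{10}{4}$-case check, and makes clear that for a connected binary matroid, proving $\mu=\eta$ amounts to exhibiting a join of size $\eta$. The paper's is self-contained and does not depend on \Cref{lem:upperBoundOnJoin}, so it gives an independent combinatorial certificate of $\mu(R_{10})\le 3$.

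Two minor simplifications are available to you:
\begin{itemize}
\item Every column of the displayed standard representation already has odd weight ($1$ or $3$). The parity argument therefore works there directly, and you need not switch representations.
\item The step you flag as requiring care is automatic. If a zero-sum $6$-set properly contained a circuit, that circuit would have size $4$. The remaining two elements would then sum to zero, contradicting simplicity.
\end{itemize}
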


\begin{proof}
Using the above representation, the circuits of $R_{10}$ are the cyclic
shifts of the following six sets:
\begin{align*}
&\{e_i,e_{i+1},e_{i+2},d_i\},\\
&\{e_i,e_{i+3},d_i,d_{i+1}\},\\
&\{e_i,d_{i+1},d_{i+2},d_{i+4}\},\\
&\{e_i,e_{i+1},e_{i+2},e_{i+3},d_{i+2},d_{i+4}\},\\
&\{e_i,e_{i+1},e_{i+3},d_{i+1},d_{i+2},d_{i+3}\},\\
&\{e_i,e_{i+1},d_i,d_{i+1},d_{i+3},d_{i+4}\}.
\end{align*}
Thus there are fifteen circuits of size $4$ and fifteen circuits of
size $6$. Checking this list shows that $J=\{e_1,e_2,e_4\}$ meets
every $4$-circuit in at most two elements and every $6$-circuit in at
most three elements. Hence $J$ is a join, so $\mu(R_{10})\ge 3$. Conversely, the same list shows that every $4$-subset of $E(R_{10})$
is either a $4$-circuit or is contained in a $6$-circuit. Therefore no
$4$-set is a join, and no larger set is a join either. Hence
$\mu(R_{10})=3$.

Next consider the following sequence of circuits:
\begin{align*}
C_1&=\{e_1,e_2,e_3,e_4,d_3,d_5\},\\
C_2&=\{e_1,d_2,d_3,d_5\},\\
C_3&=\{e_2,e_4,d_4,d_5\},\\
C_4&=\{e_1,e_2,e_5,d_5\},\\
C_5&=\{e_1,e_2,e_3,d_1\}.
\end{align*}
These form an ear decomposition of $R_{10}$ with lobes
\begin{equation*}
\{e_1,e_2,e_3,e_4,d_3,d_5\},\ \{d_2\},\ \{d_4\},
\{e_5\},\ \{d_1\}.
\end{equation*}
Thus this ear decomposition has four odd lobes.

Since $|E(R_{10})|=10$ and $r(R_{10})=5$, every ear decomposition has
five ears by \Cref{lem:number-of-ears}. Moreover, every circuit of
$R_{10}$ has even size, so the first lobe in any ear decomposition is
even. Hence an ear decomposition has at most four odd lobes. Therefore
$\gamma(R_{10})=4$ and $\eta(R_{10})=(10-4)/2=3$.
\end{proof}

\subsection{The Regular-Matroid Bound}
\label{sec:regular_bounds}

Since every regular matroid is binary, \Cref{thm:binary-bound} shows that $\mu(\cM)\leq\eta(\cM)$ whenever $\cM$ is connected and not isomorphic to $U_{1,1}$. In this section, we establish the reverse inequality up to an absolute constant. We first give several auxiliary results that will be used in the proof of \Cref{thm:regular}. Recall that a cycle of a matroid is a possibly empty disjoint union of circuits.

\begin{lem}\label{lem:join-cycle}
Let $\cM$ be a matroid.
If $J$ is a join of $\cM$, then $|J\cap X|\le |X|/2$ for every cycle $X$ of $\cM$.
\end{lem}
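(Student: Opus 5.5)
The plan is to use the definition of a cycle directly: $X$ is a disjoint union of circuits, and we add up the join inequality over those circuits. No structural input such as binarity is needed, since the statement is purely additive.

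First, write $X=C_1\cup\dots\cup C_k$ with $C_1,\dots,C_k$ pairwise disjoint circuits of $\cM$. This decomposition exists by the definition of a cycle, and $k=0$ when $X=\emptyset$. Second, apply the join condition to each $C_j$ separately, which gives $|J\cap C_j|\le |C_j|/2$ for every $j$. Third, use disjointness. The sets $J\cap C_j$ partition $J\cap X$, and the sets $C_j$ partition $X$. Summing the inequalities therefore yields
\[
|J\cap X|=\sum_{j=1}^k |J\cap C_j|\le \sum_{j=1}^k \frac{|C_j|}{2}=\frac{|X|}{2}.
\]
The empty cycle satisfies the bound trivially, since both sides are $0$.

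There is no real obstacle here. The only point needing care is that the circuits in the decomposition must be pairwise disjoint, because otherwise the sums would overcount. This is guaranteed by the definition of a cycle recalled in the preliminaries.
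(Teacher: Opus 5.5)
Your proposal is correct and matches the paper's proof: write the cycle as a disjoint union of circuits, apply the join inequality to each circuit, and sum using disjointness. Your note on the empty cycle is a harmless extra detail that the paper leaves implicit.
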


\begin{proof}
Since $X$ is a cycle, it can be written as a disjoint union
$X=C_1\cup\dots\cup C_s$ of circuits. Because $J$ is a join,
$|J\cap C_j|\le |C_j|/2$ for $1\le j\le s$. Summing these inequalities gives
$$|J\cap X|=\sum_{j=1}^s |J\cap C_j|\le \frac12\sum_{j=1}^s |C_j|=|X|/2.$$
\end{proof}

\begin{lem}\label{lem:cycles-descend-under-contraction}
Let $\cM$ be a binary matroid, and let $S\subseteq E(\cM)$.
If $X$ is a cycle of $\cM$, then $X\setminus S$ is a cycle of $\cM/S$.
\end{lem}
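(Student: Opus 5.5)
To prove that $X\setminus S$ is a cycle of $\cM/S$ for a binary matroid $\cM$, I would argue through orthogonality with cocircuits, which is the cleanest characterization of cycles in binary matroids. Recall the standard fact (see \cite[Chapter~9]{oxleyMatroid}) that in a binary matroid $N$, a set $Z\subseteq E(N)$ is a cycle if and only if $|Z\cap D|$ is even for every cocircuit $D$ of $N$. Since minors of binary matroids are binary, the contraction $\cM/S$ is binary, so the criterion applies to it.

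The key step is to describe the cocircuits of $\cM/S$. Duality gives $(\cM/S)^*=\cM^*\backslash S$, and the circuits of a deletion $\cM^*\backslash S$ are exactly the circuits of $\cM^*$ disjoint from $S$. Consequently every cocircuit $D$ of $\cM/S$ is a cocircuit of $\cM$ with $D\cap S=\emptyset$.

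With this description, the conclusion is a parity computation. Let $D$ be a cocircuit of $\cM/S$. Then $D$ is a cocircuit of $\cM$ avoiding $S$, so
\[
|(X\setminus S)\cap D|=|X\cap D|.
\]
The right-hand side is even by the criterion applied to the cycle $X$ of $\cM$. Hence $X\setminus S$ meets every cocircuit of $\cM/S$ in an even number of elements, and is therefore a cycle of $\cM/S$. Note that $X\setminus S$ may be empty, which is allowed because cycles may be empty.

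The only real point is the characterization of cycles by even intersection with cocircuits; in particular, the backward direction requires binarity. I would cite it rather than reprove it. If a self-contained route is preferred, induction on $|S|$ also works. For a single element $s$, write $X$ as a disjoint union of circuits $C$. If $s\notin C$, then $C$ is a union of circuits of $\cM/s$. If $s\in C$ and $s$ is not a loop, then $C-s$ is a circuit of $\cM/s$. If $s$ is a loop, then $C=\{s\}$ and it disappears. Thus $X-s$ is a disjoint union of cycles of $\cM/s$, and hence a cycle. This induction avoids binarity entirely, although binarity would be needed to treat symmetric differences of cycles as cycles via \Cref{thm:bin}.
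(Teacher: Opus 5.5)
Your main argument is correct, and it takes a different route from the paper. The paper fixes an $\mathbb F_2$-representation $\{v_e\}$ and uses that the cycles are exactly the sets whose columns sum to zero. It represents $M/S$ by the images of the $v_e$ in the quotient by the span of $\{v_s\colon s\in S\}$, and pushes the zero-sum relation through the quotient map. You stay representation-free. A subset of a binary matroid is a cycle if and only if it meets every cocircuit evenly. Since $(M/S)^*=M^*\backslash S$, the cocircuits of $M/S$ are exactly the cocircuits of $M$ avoiding $S$, so the parity condition transfers verbatim.

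The two proofs are dual views of the same linear algebra. The paper works in the kernel of the representation (the cycle space); you work in its orthogonal complement (the cocycle space), where contracting $S$ becomes the simpler operation of deleting $S$. The paper's version is a two-line computation once the zero-sum characterization and the quotient representation of a contraction are granted. Yours needs no representation and makes explicit that binarity is used only in the backward direction of the cocircuit criterion, applied to the binary minor $M/S$. The price is citing the orthogonality theorem, whose proof is essentially the paper's linear algebra.

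Your closing remark, however, is wrong: the single-element induction does not avoid binarity, and the lemma is false without it. In the case $s\notin C$ you only know that $C$ is a \emph{union} of circuits of $M/s$. To conclude that $X-s$ is a cycle, you need each such $C$ to be a \emph{disjoint} union of circuits of $M/s$. For example, take $U_{2,4}$ on $\{a,b,c,s\}$. The set $C=\{a,b,c\}$ is a circuit avoiding $s$, while $M/s\cong U_{1,3}$ has the $2$-subsets as its circuits. So $C$ is a union of circuits of $M/s$ but, having odd size, is not a cycle of $M/s$.

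Either drop that paragraph or repair the case $s\notin C$ using binarity. If $C$ is not a circuit of $M/s$, take a circuit $C_1\subsetneq C$ of $M/s$. Then $C_1+s$ is a circuit of $M$, so by \Cref{thm:bin} the set $C\triangle(C_1+s)=(C-C_1)+s$ is a cycle of $M$. As $C-C_1$ is independent, every circuit in a partition of this cycle contains $s$, so $(C-C_1)+s$ is itself a circuit. Hence $C=C_1\cup(C-C_1)$ is a disjoint union of two circuits of $M/s$.
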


\begin{proof} 
Let $r=r(\cM)$, and choose a representation of $\cM$ over $\mathbb F_2$ with columns $v_e\in\mathbb F_2^r$ for $e\in E(\cM)$. By \Cref{thm:bin}, a set $X\subseteq E(\cM)$ is a cycle of $\cM$ if and only if $\sum_{e\in X} v_e=0$. 
Let $W=\spa_{\mathbb F_2}\{v_s\colon s\in S\}$, and let $\pi\colon\mathbb F_2^r\to \mathbb F_2^r/W$ be the quotient map. The contraction $\cM/S$ is represented by the vectors $\bar v_e=\pi(v_e)$ for $e\in E(\cM)\setminus S$. Set $Y=X\setminus S$. Since $X$ is a cycle of $\cM$, we have $\sum_{e\in X}v_e=0$. Applying $\pi$, we get $\sum_{e\in X}\pi(v_e)=0$. Moreover, $\pi(v_s)=0$ for every $s\in S$. Hence $\sum_{e\in Y}\bar v_e=0$. Therefore $Y$ is a cycle of $\cM/S$. 
\end{proof}

\begin{lem}\label{lem:two-sum-mu-rank}
Let $\cM=\cM_1\oplus_2 \cM_2$ be a $2$-sum of binary matroids, and let
$E(\cM_1)\cap E(\cM_2)=\{p\}$. For $i\in\{1,2\}$, set $Q_i=M_i/p$. 
Then $\mu(\cM)\ge \mu(Q_1)+\mu(Q_2)$ and $r(\cM)=r(Q_1)+r(Q_2)+1$.
\end{lem}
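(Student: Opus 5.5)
The rank identity comes first. By the basis description of the $2$-sum, every basis of $\cM$ has the form $B_1'\cup B_2$ with $B_1'\in\cB(\cM_1/p)$ and $B_2\in\cB(\cM_2\backslash p)$. Since $p$ is not a loop of $\cM_1$, we have $r(\cM_1/p)=r(\cM_1)-1=r(Q_1)$. Since $p$ is not a coloop of $\cM_2$, we have $r(\cM_2\backslash p)=r(\cM_2)=r(Q_2)+1$. Adding these gives $r(\cM)=r(Q_1)+r(Q_2)+1$.

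For the join inequality, take a maximum join $J_1$ of $Q_1$ and a maximum join $J_2$ of $Q_2$, with $J_1\subseteq E(\cM_1)-p$ and $J_2\subseteq E(\cM_2)-p$. We aim to show that $J=J_1\cup J_2$ is a join of $\cM$. The union is disjoint, so this yields $\mu(\cM)\ge\mu(Q_1)+\mu(Q_2)$. To check it, let $C$ be a circuit of $\cM$ and set $X_i=C\cap E(\cM_i)$ for $i=1,2$, so that $C=X_1\cup X_2$ is a disjoint union. By the definition of $\cM_1\triangle\cM_2$, we can write $C=Y_1\triangle Y_2$ with $Y_i$ a cycle of $\cM_i$ and $Y_1\cap\{p\}=Y_2\cap\{p\}$. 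Then $X_i=Y_i\setminus\{p\}$.

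Next apply \Cref{lem:cycles-descend-under-contraction} to $\cM_i$ with $S=\{p\}$. It shows that $X_i=Y_i\setminus p$ is a cycle of $Q_i=\cM_i/p$, and this holds whether or not $p\in Y_i$. \Cref{lem:join-cycle} applied in $Q_i$ then gives $|J_i\cap X_i|\le |X_i|/2$. Summing over $i=1,2$ gives
\[
|J\cap C|=|J_1\cap X_1|+|J_2\cap X_2|\le \tfrac{|X_1|+|X_2|}{2}=\tfrac{|C|}{2},
\]
so $J$ is a join of $\cM$.

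The main point needing care is the cycle description of circuits of the $2$-sum. It follows from the definition given in the preliminaries, because every circuit is in particular a cycle of $\cM_1\triangle\cM_2$, and cycles there are exactly such symmetric differences. It also needs $\cM_1,\cM_2$ binary so that \Cref{lem:cycles-descend-under-contraction} applies. No other obstacles are expected, since contraction of a single element keeps the joins inside $E(\cM_i)-p$, which is exactly where the pieces of $C$ live.
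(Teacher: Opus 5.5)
Your proof is correct and follows the paper's argument: you use the same join $J_1\cup J_2$ and apply \Cref{lem:cycles-descend-under-contraction} and \Cref{lem:join-cycle} on each side. The differences are cosmetic. You treat all circuits at once via the cycle description $C=Y_1\triangle Y_2$, which is how the paper handles the $3$-sum, instead of splitting into the three circuit types. You also read the rank off the basis description rather than quoting $r(\cM)=r(\cM_1)+r(\cM_2)-1$.
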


\begin{proof}
Let $J_i$ be a maximum join of $Q_i$, so that $|J_i|=\mu(Q_i)$ for $i\in\{1,2\}$. Define $J\coloneqq J_1\cup J_2\subseteq E(\cM)$. We show that $J$ is a join of $\cM$. This will imply $\mu(\cM)\ge |J|=|J_1|+|J_2|=\mu(Q_1)+\mu(Q_2)$.

Let $C$ be a circuit of $\cM$. Then $C$ is of one of the following three types.

\medskip

\noindent
\textbf{Case 1.} $C$ is a circuit of $\cM_1\backslash p$.

Then $C$ is a cycle of $\cM_1$ disjoint from $\{p\}$. By \Cref{lem:cycles-descend-under-contraction}, $C$ is a cycle of $Q_1=\cM_1/p$. Since $J_1$ is a join of $Q_1$, \Cref{lem:join-cycle} gives $|J\cap C|=|J_1\cap C|\le |C|/2$.

\medskip

\noindent
\textbf{Case 2.} $C$ is a circuit of $\cM_2\backslash p$.

The same argument gives $|J\cap C|=|J_2\cap C|\le |C|/2$.

\medskip

\noindent
\textbf{Case 3.} $C=(C_1\cup C_2)-p$, where $C_i$ is a circuit of $\cM_i$ containing $p$, for $i\in\{1,2\}$.

By \Cref{lem:cycles-descend-under-contraction}, the set $C_i-p$ is a cycle of $Q_i=\cM_i/p$. Therefore, by \Cref{lem:join-cycle}, $|J_i\cap (C_i-p)|\le (|C_i|-1)/2$ for $i\in\{1,2\}$. Hence
\[
        |J\cap C|
        =
        |J_1\cap (C_1-p)|+|J_2\cap (C_2-p)|
        \le
        \frac{|C_1|-1}{2}+\frac{|C_2|-1}{2}
        =
        \frac{|C|}{2}.
\]
Thus the join inequality holds for every circuit $C$ of $\cM$. Therefore $J$ is a join of $\cM$, and $\mu(\cM)\ge \mu(Q_1)+\mu(Q_2)$.\\

For the rank formula, we have $r(\cM)=r(\cM_1)+r(\cM_2)-1$ and $r(Q_i)=r(\cM_i/p)=r(\cM_i)-1$. Therefore $r(\cM)=r(Q_1)+r(Q_2)+1$.
\end{proof}

\begin{lem}
\label{lem:three-sum-mu-rank}
Let $\cM=\cM_1\oplus_3 \cM_2$ be a $3$-sum of binary matroids, and let
$E(\cM_1)\cap E(\cM_2)=T$, where $T$ is a triangle of both $\cM_1$ and $\cM_2$.
For $i\in\{1,2\}$, set $Q_i=\cM_i/T$. Then
$\mu(\cM)\ge \mu(Q_1)+\mu(Q_2)$ and
$r(\cM)=r(Q_1)+r(Q_2)+2$.
\end{lem}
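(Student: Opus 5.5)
The proof will follow \Cref{lem:two-sum-mu-rank} closely. Let $J_i$ be a maximum join of $Q_i=\cM_i/T$ for $i\in\{1,2\}$, and set $J=J_1\cup J_2\subseteq E(\cM)=(E_1\cup E_2)\setminus T$. Proving that $J$ is a join of $\cM$ gives $\mu(\cM)\ge\mu(Q_1)+\mu(Q_2)$. For the rank, the preliminaries give $r(\cM)=r(\cM_1)+r(\cM_2)-2$. Since $T$ is a triangle, $r_{\cM_i}(T)=2$, so $r(Q_i)=r(\cM_i)-2$. Substituting yields $r(\cM)=r(Q_1)+r(Q_2)+2$.

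For the join property, I will use the definition of $\cM_1\triangle\cM_2$ directly rather than a case list of circuit types. Every cycle of $\cM$, and in particular every circuit $C$, has the form $C=X_1\triangle X_2$. Here $X_i$ is a cycle of $\cM_i$ and $X_1\cap T=X_2\cap T$. Because $E_1\setminus T$ and $E_2\setminus T$ are disjoint and the $T$-parts cancel, $C$ is the disjoint union of $X_1\setminus T$ and $X_2\setminus T$.

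By \Cref{lem:cycles-descend-under-contraction}, each $X_i\setminus T$ is a cycle of $Q_i$. Then \Cref{lem:join-cycle} gives $|J_i\cap(X_i\setminus T)|\le |X_i\setminus T|/2$. We have $J\cap C=(J_1\cap(X_1\setminus T))\cup(J_2\cap(X_2\setminus T))$, because $J_i\subseteq E_i\setminus T$. Adding the two inequalities gives $|J\cap C|\le |C|/2$.

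The main point to check is that this decomposition holds exactly: no element of $J_1$ lies in $X_2$, and $C\cap E_i=X_i\setminus T$. Both follow from the disjointness of the ground sets outside $T$. The argument uses neither coindependence of $T$ nor any finer structure of the 3-sum; these are needed only for the rank formula quoted from the preliminaries. The argument also covers the 2-sum case uniformly. I expect no real obstacle beyond stating the cycle description carefully, since the definition of $\triangle$ directly gives cycles as $X_1\triangle X_2$ with matching traces on $T$.
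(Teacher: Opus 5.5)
Your proposal is correct and matches the paper's proof. It uses the same join $J=J_1\cup J_2$, the same appeal to \Cref{lem:cycles-descend-under-contraction} and \Cref{lem:join-cycle}, and the same rank computation. The only difference is cosmetic: you handle every cycle uniformly as $X_1\triangle X_2$, whereas the paper lists separately the cycles disjoint from $T$, which are just the cases $X_2=\emptyset$ or $X_1=\emptyset$.
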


\begin{proof}
Let $J_i$ be a maximum join of $Q_i$, so that $|J_i|=\mu(Q_i)$ for $i\in\{1,2\}$. Define $J\coloneqq J_1\cup J_2\subseteq E(\cM)$. We show that $J$ is a join of $\cM$. This will imply $\mu(\cM)\ge |J|=|J_1|+|J_2|=\mu(Q_1)+\mu(Q_2)$. 

We verify the join inequality for all cycles of $\cM$; this is enough, since every circuit is a cycle. By the definition of the binary $3$-sum, every cycle $C$ of $\cM$ is of one of the following types. Let $C$ be a cycle of $\cM$. We distinguish three cases.

\medskip

\noindent
\textbf{Case 1.} $C$ is a cycle of $\cM_1$ disjoint from $T$.

By \Cref{lem:cycles-descend-under-contraction}, $C$ is a cycle of $Q_1=\cM_1/T$. Since $J_1$ is a join of $Q_1$, \Cref{lem:join-cycle} gives $|J\cap C|=|J_1\cap C|\le |C|/2$.

\medskip

\noindent
\textbf{Case 2.} $C$ is a cycle of $\cM_2$ disjoint from $T$.

The same argument gives $|J\cap C|=|J_2\cap C|\le |C|/2$.

\medskip

\noindent
\textbf{Case 3.} $C=C_1\triangle C_2$, where $C_i$ is a cycle of $\cM_i$, for $i=1,2$, and $C_1\cap T=C_2\cap T$.

In this case, $C=(C_1\setminus T)\cup(C_2\setminus T)$. By \Cref{lem:cycles-descend-under-contraction}, the set $C_i\setminus T$ is a cycle of $Q_i=\cM_i/T$. Since $J_i$ is a join of $Q_i$, \Cref{lem:join-cycle} gives $|J_i\cap (C_i\setminus T)|\le |C_i\setminus T|/2$ for $i\in\{1,2\}$. Hence
\[
        |J\cap C|
        =
        |J_1\cap (C_1\setminus T)|+|J_2\cap (C_2\setminus T)|
        \le
        \frac{|C_1\setminus T|}{2}+\frac{|C_2\setminus T|}{2}
        =
        \frac{|C|}{2}.
\]
Thus the join inequality holds for every circuit of $\cM$, and so $J$ is a join. Therefore $\mu(\cM)\ge \mu(Q_1)+\mu(Q_2)$.\\

For the rank formula, we use the standard equality $r(\cM)=r(\cM_1)+r(\cM_2)-2$ for a $3$-sum. Since $T$ is a triangle in each $\cM_i$, we have $r_{\cM_i}(T)=2$, and hence $r(Q_i)=r(\cM_i/T)=r(\cM_i)-2$. Therefore $r(\cM)=r(Q_1)+r(Q_2)+2$.
\end{proof}

Combining the preceding statements with Seymour's decomposition theorem yields the following bound.

\begin{thm}\label{thm:regular}
Let $\cM$ be a regular matroid, and let $\kappa(\cM)$ denote the number of connected components of $\cM$ with positive rank. Then $r(\cM)\le 6\mu(\cM)-2\kappa(\cM)$.
\end{thm}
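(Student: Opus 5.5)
We prove the statement by induction on $|E(\cM)|$, using that $r$, $\mu$ and $\kappa$ are all additive over connected components. For $\mu$ this holds because the circuits of a direct sum are exactly the circuits of its components. A component of rank $0$ consists of loops and contributes $0$ to all three quantities. So it suffices to show that every connected regular matroid $\cM$ of positive rank satisfies
\[
r(\cM)\le 6\mu(\cM)-2 .
\]
We first note that $\mu(\cM)\ge 1$: a nonloop singleton is a join, since every circuit through it has size at least $2$. Hence the case $r(\cM)=1$ is trivial. If $r(\cM)\ge 2$ and $\cM$ has a parallel pair, \Cref{lem:parallel-deletion-mu-eta} deletes one element while preserving connectivity, $\mu$ and $r$, and we apply induction. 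We may therefore assume that $\cM$ is simple and $r(\cM)\ge 2$.

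If $\cM$ is not $3$-connected, \Cref{thm:seym-2} writes $\cM=\cM_1\oplus_2\cM_2$ with each $\cM_i$ a proper minor, hence regular, with at least three elements. Since $\cM_i$ is connected (it is a $2$-sum part of a connected matroid), a rank-$1$ $\cM_i$ would be a parallel class of size at least $3$ and would create a parallel pair in $\cM$. So $r(\cM_i)\ge 2$, and $Q_i=\cM_i/p$ has positive rank, giving $\kappa(Q_i)\ge 1$. Applying induction to $Q_i$ and using \Cref{lem:two-sum-mu-rank}, we get
\[
r(\cM)=r(Q_1)+r(Q_2)+1\le 6\bigl(\mu(Q_1)+\mu(Q_2)\bigr)-4+1\le 6\mu(\cM)-3 .
\]

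If $\cM$ is $3$-connected, we use \Cref{thm:seym-3}. In case \ref{it:iv}, each $\cM_i$ is a regular minor. Property \ref{it:c} gives elements outside $\clo_{\cM_i}(T)$, so $r(\cM_i)\ge 3$ and $Q_i=\cM_i/T$ has positive rank. Then \Cref{lem:three-sum-mu-rank} and induction give
\[
r(\cM)\le 6\mu(\cM)-4+2=6\mu(\cM)-2 .
\]
For $R_{10}$, \Cref{prop:r10-example} gives $5\le 6\cdot 3-2$. In the graphic case, Frank's theorem (\Cref{thm:frank}) together with $\eta\ge\lceil r/2\rceil$ (shown in the proof of \Cref{thm:general-bound}) yields $r\le 2\mu\le 6\mu-2$.

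The main obstacle is the cographic case. \Cref{thm:cographic-eta-mu-bound} only bounds $\eta$, and $\eta$ may be much smaller than $r$. However, its proof actually gives $r\le c_0\mu$ with $c_0<5.5$, via Claim~\ref{claim:cographic-mu-alpha-lower-bound}. This yields $r<6\mu-2$ as soon as $0.5\mu\ge 2$, i.e.\ for $\mu\ge 4$. For $\mu\in\{1,2,3\}$ we return to the sphere-covering inequality~\eqref{eq:sphere-covering-binomial-lower-bound},
\[
\sum_{i\le\mu}\binom{m}{i}\ge 2^r ,
\]
together with $m<3r$, which holds because a simple cographic matroid comes from a graph of minimum degree at least $3$. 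For $\mu=1$ this gives $2^r\le m+1\le 3r$, so $r\le 3\le 4$. For $\mu=2$ it rules out $r\ge 9$, so $r\le 8\le 10$. For $\mu=3$, already at $r=17$ we have $\sum_{i\le 3}\binom{50}{i}<2^{17}$, and the left side grows only polynomially in $r$, so $r\le 16$. In each case $r\le 6\mu-2$, which completes the induction.
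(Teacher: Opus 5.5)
Your proof is correct and follows the paper's argument essentially step for step: induction on $|E(\cM)|$, reduction to simple connected matroids of rank at least $2$ via \Cref{lem:parallel-deletion-mu-eta}, the graphic, cographic and $R_{10}$ base cases, and the $2$-sum and $3$-sum steps via \Cref{lem:two-sum-mu-rank,lem:three-sum-mu-rank}. The only difference is in the cographic case: the paper uses integrality of $r$ ($r<5.5\mu\le 6\mu-1$ forces $r\le 6\mu-2$ once $\mu\ge 2$), so only $\mu=1$ needs the sphere-covering check, and your additional computations for $\mu\in\{2,3\}$ are correct but unnecessary.
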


\begin{proof}
We prove the theorem by induction on $|E(\cM)|$. 
If $E(\cM)=\{e\}$, then either $e$ is a loop, in which case $r(\cM)=\mu(\cM)=\kappa(\cM)=0$, or $e$ is not a loop, in which case $r(\cM)=1$, $\mu(\cM)=1$, and $\kappa(\cM)=1$. Thus the result holds when $|E(\cM)|\le 1$. Assume from now on that $|E(\cM)|\ge 2$, and that the theorem holds for every regular matroid with fewer elements. 

First suppose that $\cM$ is disconnected, say $\cM=\cM_1\oplus_1\cdots\oplus_1\cM_t$, where $\cM_1,\ldots,\cM_t$ are the connected components. Since the circuits of a direct sum are precisely the circuits of its components, we have $\mu(\cM)=\sum_{j=1}^t\mu(\cM_j)$. Also $r(\cM)=\sum_{j=1}^t r(\cM_j)$ and $\kappa(\cM)=\sum_{j=1}^t\kappa(\cM_j)$. Applying the induction hypothesis to each component and summing gives $r(\cM)\le 6\mu(\cM)-2\kappa(\cM)$.

We may therefore assume that $\cM$ is connected. Since $|E(\cM)|\ge 2$, the case $r(\cM)=0$ cannot occur. If $r(\cM)=1$, then $\cM$ is a parallel class, so $\mu(\cM)=1$ and $\kappa(\cM)=1$, and hence $r(\cM)=1\le 4=6\mu(\cM)-2\kappa(\cM)$. Thus we may assume that $r(\cM)\ge 2$. If $\cM$ has a parallel pair $e,f$, then \Cref{lem:parallel-deletion-mu-eta} implies that $\cM\setminus f$ is connected and satisfies $r(\cM\setminus f)=r(\cM)$ and $\mu(\cM\setminus f)=\mu(\cM)$. Since both matroids have positive rank and are connected, $\kappa(\cM)=\kappa(\cM\setminus f)=1$. The induction hypothesis applied to $\cM\setminus f$ gives $r(\cM)\le 6\mu(\cM)-2$. Hence we may assume that $\cM$ is simple. From now on, $\kappa(\cM)=1$, so it is enough to prove $r(\cM)\le 6\mu(\cM)-2$. Also, since $r(\cM)\ge 2$, the matroid $\cM$ is not isomorphic to $U_{1,1}$.

We now consider the basic classes. If $\cM$ is graphic, then \Cref{thm:frank} applies and gives $\mu(\cM)=\eta(\cM)$. Since $\eta(\cM)=(r(\cM)+\varphi(\cM))/2\ge r(\cM)/2$, we get $r(\cM)\le 2\mu(\cM)\le 6\mu(\cM)-2$. If $\cM$ is cographic, then the proof of \Cref{thm:cographic-eta-mu-bound} gives $r(\cM)<5.5\mu(\cM)$ for simple connected cographic matroids of rank at least $2$. If $\mu(\cM)\ge 2$, then $r(\cM)<5.5\mu(\cM)\le 6\mu(\cM)-1$, and since $r(\cM)$ is an integer, this implies $r(\cM)\le 6\mu(\cM)-2$. It remains to consider the case $\mu(\cM)=1$. Write $\cM=\cM^*(G)$, where $G$ is connected, and let $n=|V(G)|$ and $m=|E(G)|$. Since $\cM$ is simple, every bond of $G$ has size at least $3$, and hence $3n\le 2m$. Also $r(\cM)=m-n+1$, so $m\le 3r(\cM)-3$. The sphere-covering bound with covering radius $1$ gives $2^{r(\cM)}\le 1+m\le 3r(\cM)-2$. Hence $r(\cM)\le 2$, and therefore $r(\cM)\le 2\le 6\mu(\cM)-2$. If $\cM\cong R_{10}$, then $r(\cM)=5$ and $\mu(\cM)=3$, so $r(\cM)=5\le 6\mu(\cM)-2$.

We may now assume that $\cM$ is simple, connected, regular, and neither graphic nor cographic nor isomorphic to $R_{10}$. Suppose first that $\cM$ is not $3$-connected. By \Cref{thm:seym-2}, we may write $\cM=\cM_1\oplus_2 \cM_2$, where $\cM_1$ and $\cM_2$ are regular matroids and each $\cM_i$ has at least three elements. Let $E(\cM_1)\cap E(\cM_2)=\{p\}$ and set $Q_i=\cM_i/p$ for $i\in\{1,2\}$. Then each $Q_i$ is regular and has fewer elements than $\cM$. By \Cref{lem:two-sum-mu-rank}, $r(\cM)=r(Q_1)+r(Q_2)+1$ and $\mu(\cM)\ge \mu(Q_1)+\mu(Q_2)$. We claim that each $Q_i$ has positive rank. Indeed, since $\cM$ is connected, each $\cM_i$ is connected. If $r(Q_i)=0$, then $r(\cM_i)=1$. A connected rank-$1$ matroid is a parallel class; since $|E(\cM_i)|\ge 3$, the matroid $\cM_i\setminus p$ contains a $2$-circuit. This $2$-circuit is also a circuit of $\cM$, contradicting the simplicity of $\cM$. Thus $r(Q_i)>0$, so $\kappa(Q_i)\ge 1$. By the induction hypothesis, $r(Q_i)\le 6\mu(Q_i)-2\kappa(Q_i)$ for $i\in\{1,2\}$. Combining this with $\kappa(Q_i)\ge 1$ gives $r(\cM)\le 6(\mu(Q_1)+\mu(Q_2))-3\le 6\mu(\cM)-3\le 6\mu(\cM)-2$.

Finally suppose that $\cM$ is $3$-connected. Since $\cM$ is regular and is neither graphic nor cographic nor isomorphic to $R_{10}$, \Cref{thm:seym-3} gives a $3$-sum $\cM=\cM_1\oplus_3 \cM_2$ of regular matroids over a common triangle $T=E(\cM_1)\cap E(\cM_2)$ such that $|E(\cM_i)\setminus \clo_{\cM_i}(T)|\ge 6$ for $i\in\{1,2\}$. Set $Q_i=\cM_i/T$. Then each $Q_i$ is a regular minor with fewer elements than $\cM$. Choose $e_i\in E(\cM_i)\setminus\clo_{\cM_i}(T)$. Then $e_i$ is not a loop of $Q_i$, so $r(Q_i)>0$ and hence $\kappa(Q_i)\ge 1$. By the induction hypothesis, $r(Q_i)\le 6\mu(Q_i)-2\kappa(Q_i)$ for $i\in\{1,2\}$. Combining this with \Cref{lem:three-sum-mu-rank}, we obtain $r(\cM)=r(Q_1)+r(Q_2)+2\le 6(\mu(Q_1)+\mu(Q_2))-2\le 6\mu(\cM)-2$. This completes the proof of the theorem.
\end{proof}

\Cref{thm:regular} also yields the corresponding estimate for $\eta$, using the general inequality $\eta(\cM)\le r(\cM)$, shown in \eqref{eq:eta-rank-phi}.

\begin{cor}\label{cor:regular}
Let $\cM$ be a connected regular matroid not isomorphic to $U_{1,1}$. Then either $\cM$ is the one-element loop, in which case $\eta(\cM)=\mu(\cM)=0$, or $\eta(\cM)\le 6\mu(\cM)-2$.
\end{cor}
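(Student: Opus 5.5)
The plan is to derive the corollary directly from \Cref{thm:regular} combined with the general inequality \eqref{eq:eta-rank-phi}, $\eta(\cM)\le r(\cM)$.

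First we dispose of the rank-zero case. Since $\cM$ is connected and not $U_{1,1}$, the rank is zero only if every element is a loop. Connectivity then forces a single element, because two distinct loops are never in a common circuit. So $\cM$ is the one-element loop. Its only circuit is the loop itself, so the only join is $\emptyset$ and $\mu(\cM)=0$. Its ear decomposition consists of the single lobe of size $1$, which is odd, so $\varphi(\cM)=0$ and $\eta(\cM)=(0+0)/2=0$.

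Otherwise $r(\cM)\ge 1$. Since $\cM$ is connected with positive rank, it has exactly one connected component and that component has positive rank, so $\kappa(\cM)=1$. \Cref{thm:regular} then gives $r(\cM)\le 6\mu(\cM)-2$. Since $\cM\not\cong U_{1,1}$ and $\cM$ is connected, \Cref{thm:ear} guarantees an ear decomposition, so $\eta(\cM)$ is defined and \eqref{eq:eta-rank-phi} applies: $\eta(\cM)=(r(\cM)+\varphi(\cM))/2\le r(\cM)$. Chaining the two inequalities yields $\eta(\cM)\le 6\mu(\cM)-2$.

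There is no substantial obstacle; all the work is in \Cref{thm:regular}. The only points needing care are the bookkeeping ones: checking that $\kappa(\cM)=1$ in the positive-rank connected case, and confirming that the loop is the only rank-zero connected matroid other than the excluded $U_{1,1}$.
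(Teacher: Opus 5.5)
Your proposal is correct and follows the paper's proof exactly: handle the one-element loop separately, observe $\kappa(\cM)=1$ in the positive-rank connected case, apply \Cref{thm:regular} to get $r(\cM)\le 6\mu(\cM)-2$, and chain this with $\eta(\cM)\le r(\cM)$ from \eqref{eq:eta-rank-phi}. Your explicit argument that a connected rank-zero matroid must be a single loop fills in a step the paper only asserts.
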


\begin{proof}
If $\cM$ is the one-element loop, then its unique ear decomposition has one odd lobe, so $\eta(\cM)=0$. Also $\mu(\cM)=0$. We may therefore assume that $\cM$ is not the one-element loop. Since $\cM$ is connected and not isomorphic to $U_{1,1}$, the parameter $\eta(\cM)$ is defined. Moreover, $\cM$ has positive rank, and hence $\kappa(\cM)=1$. By \Cref{thm:regular}, $r(\cM)\le 6\mu(\cM)-2$. Using \eqref{eq:eta-rank-phi}, we get $\eta(\cM)\le 6\mu(\cM)-2$.
\end{proof}

\section{Conclusion}
\label{sec:conclusion}

In this paper, we studied how far Frank's join--ear min--max theorem for graphic matroids extends beyond the graphic case. We showed that the equality $\mu(\cM)=\eta(\cM)$ does not extend to matroids in general: it fails already for cographic matroids, the two parameters are incomparable for general matroids, and the class of matroids satisfying the equality is not minor-closed. We also proved that computing $\mu(\cM)$ is NP-hard for cographic matroids, hard to approximate within a factor of $519/520$, and NP-hard for connected sparse paving matroids given by their list of bases. On the positive side, we established comparison bounds between $\mu(\cM)$ and $\eta(\cM)$ for several natural classes, including binary, paving, cographic, regular, and arbitrary connected matroids. We close the paper by mentioning some open problems.

\begin{enumerate}\itemsep0em
    \item Frank's theorem gives an exact equality for graphic matroids, while our examples show that this equality does not extend to cographic or paving matroids in general. It would be interesting to identify natural non-graphic classes for which the equality still holds. More broadly, can one characterize a reasonably large class of matroids satisfying $\mu(\cM)=\eta(\cM)$?

    \item For regular matroids, we proved the bound $\eta(\cM)\le 6\mu(\cM)-2$. This is unlikely to be best possible. What is the smallest constant $c_{\mathrm{reg}}$ such that $\eta(\cM)\le c_{\mathrm{reg}}\mu(\cM)$ holds for every connected regular matroid? In particular, does one always have $\eta(\cM)\le 2\mu(\cM)$?

    \item Another natural class to consider is the class of transversal matroids. Do connected transversal matroids satisfy $\mu(\cM)=\eta(\cM)$? If not, are the two parameters comparable by an absolute constant on this class?

    \item For matroids represented over finite fields of characteristic $2$, the ear-decomposition parameter $\varphi(\cM)$ can be computed in randomized polynomial time, while our results show that the join parameter $\mu(\cM)$ is hard to compute already for cographic and sparse paving matroids. This leaves open the algorithmic picture for other natural matroid classes. For which matroid classes can $\mu(\cM)$ be computed or approximated efficiently?

    \item Ear decompositions have many structural and algorithmic applications in graph theory beyond Frank's theorem. Some of these have matroidal analogues, such as the factor-critical theory of Szegedy and Szegedy for matroids representable over fields of characteristic $2$. It would be interesting to understand which further results about optimized or parity-constrained ear decompositions have meaningful extensions to matroids.
\end{enumerate}

\medskip
\paragraph{Acknowledgement.}

Yuhang Bai was supported by the National Natural Science Foundation of China (12131013 and 12471334), by China Scholarship Council (202406290002), and by Shaanxi Fundamental Science Research Project for Mathematics and Physics (22JSZ009). The research received further support from the Lend\"ulet Programme of the Hungarian Academy of Sciences (LP2021-1/2021), from the Ministry of Innovation and Technology of Hungary from the National Research, Development and Innovation Fund (ADVANCED 150556, ADVANCED 153096, and ELTE TKP 2021-NKTA-62), and from the Dynasnet European Research Council Synergy project (ERC-2018-SYG 810115).

\bibliographystyle{abbrv}
\bibliography{join}

\end{document}